\newtheorem{thm}{Theorem}
\newtheorem{prop}[thm]{Proposition}
\newtheorem{lem}[thm]{Lemma}
\newtheorem{cor}[thm]{Corollary}
\newtheorem{rem}[thm]{Remark}
\renewcommand{\epsilon}{\varepsilon}
\renewcommand{\phi}{\varphi}
\renewcommand{\deg}{\operatorname{deg}}
\newcommand{\BB}{\mathbb}
\newcommand{\separate}{\vskip5pt}
\newcommand{\re}{\operatorname{Re}}
\newcommand{\tr}{\operatorname{Tr}}
\newcommand{\B}{\overline}
\newcommand{\HC}{\BB H_{\BB C}}
\newcommand{\degt}{\widetilde{\operatorname{deg}}}
\newcommand\textcyr[1]{{\fontencoding{OT2}\fontfamily{wncyr}\selectfont #1}}
\newcommand{\Zh}{\textit{\textcyr{Zh}}}
\begin{document}

\title{\bf The Conformal Four-Point Integrals, Magic Identities and
Representations of $U(2,2)$}
\author{Matvei Libine}
\maketitle

\begin{abstract}
In \cite{FL1, FL3} we found mathematical interpretations of the one-loop
conformal four-point Feynman integral as well as the vacuum polarization
Feynman integral in the context of representations of a Lie group $U(2,2)$
and quaternionic analysis. Then we raised a natural
question of finding mathematical interpretation of other Feynman diagrams
in the same setting. In this article we describe this interpretation for
all conformal four-point integrals.
Using this interpretation, we give a representation-theoretic proof of
an operator version of the ``magic identities'' for the conformal four-point
integrals described by the box diagrams.

The original ``magic identities'' are due to J.~M.~Drummond, J.~Henn,
V.~A.~Smirnov and E.~Sokatchev; they assert that all $n$-loop box integrals
for four scalar massless particles are equal to each other \cite{DHSS}.
The authors give a proof of the magic identities for the Euclidean metric case
only and claim that the result is also true for the Minkowski metric case.
However, the Minkowski case is much more subtle.
In this article we prove an operator version of the magic identities in the
Minkowski metric case and, in particular, specify the relative positions
of cycles of integration that make these identities correct.

No prior knowledge of physics or Feynman diagrams is assumed from the reader.
We provide a summary of all relevant results from quaternionic analysis to
make the article self-contained.
\end{abstract}

\noindent
{\bf MSC:} 22E70, 81T18, 30G35, 53A30.

\noindent
{\bf Keywords:} Feynman diagrams, conformal four-point integrals,
``magic identities'', representations of $U(2,2)$, conformal geometry,
quaternionic analysis.

\section{Introduction}

Feynman diagrams are a pictorial way of describing integrals predicting
possible outcomes of interactions of subatomic particles in the context of
quantum field physics.
If at all possible, evaluating these integrals tends to be challenging
and usually produces rather cumbersome expressions.
Moreover, many Feynman diagrams result in integrals that are divergent in
mathematical sense.
Physicists have various techniques called ``renormalizations'' of Feynman
integrals which ``cancel out the infinities'' coming from different parts
of the diagrams.
(For a survey of renormalization techniques see, for example, \cite{Sm}.)
However, these renormalization techniques appear very suspicious to
mathematicians and attract criticism from physicists as well.
For example, if different techniques yield different results,
how do you choose the ``right'' technique?
Or, if they yield the same result, what is the underlying reason for that?
If one can find an intrinsic mathematical meaning of Feynman diagrams and
the corresponding integrals, most of these questions will be resolved.

A number of mathematicians already work on this problem, mostly in
the setting of algebraic geometry.
See, for example, \cite{Mar} for a summary of these algebraic-geometric
developments as well as a comprehensive list of references.
On the other hand, Igor Frenkel has noticed that at least some types of
Feynman diagrams can be interpreted in the context of representation theory
and quaternionic analysis.
In \cite{FL1, FL3, L} we successfully identified the three Feynman diagrams
shown in Figure \ref{basic} with intertwining operators of certain
representations of $U(2,2)$ in the context of quaternionic analysis.
Then we raised a natural question of finding mathematical interpretation
of other Feynman diagrams in the same setting.

\begin{figure}
\begin{center}
\begin{subfigure}{0.25\textwidth}
\centering
\includegraphics[scale=1]{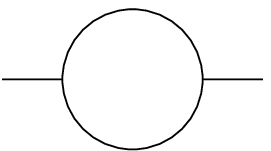}
\end{subfigure}
\begin{subfigure}{0.25\textwidth}
\centering
\includegraphics[scale=1]{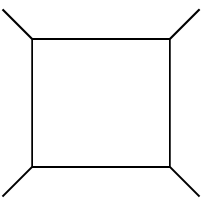}
\end{subfigure}
\begin{subfigure}{0.35\textwidth}
\centering
\includegraphics[scale=1]{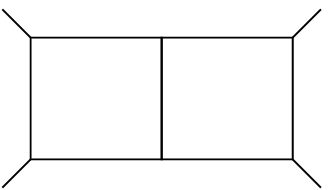}
\end{subfigure}
\end{center}
\caption{Feynman diagrams: the vacuum polarization diagram (left),
the one-loop ladder diagram (center) and the two-loop ladder diagram (right).}
\label{basic}
\end{figure}

This paper deals with conformal four-point integrals described by the
box diagrams. They play an important role in physics, particularly in
Yang-Mills conformal field theory.
For more details see \cite{DHSS} and references therein.
These diagrams have been thoroughly studied by physicists.
For example, the integral described by the one-loop Feynman diagram
is known to express the hyperbolic volume of an ideal tetrahedron,
and is given by the dilogarithm function \cite{DD, W};
there are explicit expressions for the integrals described by the
ladder diagrams in terms of polylogarithms \cite{UD}.
Perhaps the most important property of the conformal four-point integrals
are the ``magic identities'' due to J.~M.~Drummond, J.~Henn, V.~A.~Smirnov
and E.~Sokatchev \cite{DHSS}.
These identities assert that all $n$-loop box integrals
for four scalar massless particles are equal to each other.
We will discuss these ``magic identities'' in Subsection \ref{magic-id-subsect}.

The original paper \cite{DHSS} gives a proof of the magic identities for
the Euclidean metric case only and claims that the result is also true for
the Minkowski metric case.
In the Euclidean case, all variables belong to $\BB H$ and there are
no convergence issues whatsoever.
On the other hand, the Minkowski case (which is the case we consider)
is much more subtle.
In order to deal with convergence issues, we must
consider the so-called ``off-shell Minkowski integrals'' or
perturb the cycles of integration inside $\BB H \otimes \BB C$.
Then the relative position of the cycles becomes very important.
In fact, choosing the ``wrong'' cycles typically results in integral being zero.

In this paper we specify the ``right'' choice of cycles and
find the representation-theoretic meaning of all conformal
four-point integrals. To each such integral, we associate an operator
$L^{(n)}$ on ${\cal H}^+ \otimes {\cal H}^+$, where ${\cal H}^+$ denotes
the space of harmonic functions on the algebra of quaternions $\BB H$.
We prove that the operator $L^{(n)}$ is $\mathfrak{u}(2,2)$-equivariant,
sends ${\cal H}^+ \otimes {\cal H}^+$ into itself and, in particular,
that the result is a function of two variables that is harmonic with respect
to each variable, which is not at all obvious from the construction.
We have a decomposition of  $\mathfrak{u}(2,2)$-representations into
irreducible components:
\begin{equation}  \label{decomp-intro}
(\pi^0_l, {\cal H}^+) \otimes (\pi^0_r, {\cal H}^+) \simeq
\bigoplus_{k=1}^{\infty} (\rho_k,\Zh^+\otimes \BB C^{k \times k}),
\end{equation}
Then, by Schur's Lemma, $L^{(n)}$ acts on each irreducible component
$(\rho_k,\Zh^+\otimes \BB C^{k \times k})$ by multiplication by some scalar
$\mu^{(n)}_k$, and we can find these scalars.
This is the essence of the main result (Theorem \ref{main-thm}).
As an immediate corollary, we obtain the ``magic identities'' for the
operators $L^{(n)}$: Any two box diagrams with the same number of loops
produce the same operator $L^{(n)}$ on ${\cal H}^+ \otimes {\cal H}^+$.
If one can prove that each conformal four-point integral is harmonic
with respect to each variable, then one easily obtains the original
``magic identities'' for the conformal four-point integrals.
The proof of Theorem \ref{main-thm} is essentially by evaluating
the operators $L^{(n)}$ on a suitably chosen set of generators of
${\cal H}^+ \otimes {\cal H}^+$. It is pretty elementary,
and we think that it is an advantage of this approach.

For example, the integrals described by the ladder diagrams have been
evaluated explicitly in \cite{UD}.
The two most simple conformal four-point integrals are the one- and two-loop
ladder integrals $l^{(1)}(Z_1,Z_2;W_1,W_2)$ and $l^{(2)}(Z_1,Z_2;W_1,W_2)$,
which can be expressed in terms of the functions
$$
\Phi^{(1)}(x,y) = \frac1{\lambda} \Bigl( 2 \operatorname{Li}_2(-\rho x)
+ 2 \operatorname{Li}_2(-\rho y)
+ \ln\frac{y}{x} \cdot \ln\frac{1+\rho y}{1+\rho x}
+ \ln(\rho x) \cdot \ln(\rho y) + \frac13 \pi^3 \Bigr),
$$
and
\begin{multline*}
\Phi^{(2)}(x,y) = \frac1{\lambda} \Bigl( 6 \operatorname{Li}_4(-\rho x)
+ 6 \operatorname{Li}_4(-\rho y) + 3 \ln\frac{y}{x} \cdot
\bigl( \operatorname{Li}_3(-\rho x) - \operatorname{Li}_3(-\rho y) \bigr)  \\
+ \frac12 \ln^2\frac{y}{x}
\cdot \bigl( \operatorname{Li}_2(-\rho x) - \operatorname{Li}_2(-\rho y) \bigr)
+ \frac14 \ln^2(\rho x) \cdot \ln^2(\rho y)  \\
+ \frac12 \pi^2 \ln(\rho x) \cdot \ln(\rho y)
+ \frac1{12} \pi^2 \ln\frac{y}{x} + \frac7{60} \pi^4 \Bigr)
\end{multline*}
respectively, where
$$
\lambda(x,y) = \sqrt{ (1-x-y)^2 - 4xy}, \qquad
\rho(x,y) = \frac2{1-x-y+\lambda},
$$
and $\operatorname{Li}_N$ denotes the polylogarithm function:
$$
\operatorname{Li}_N(z) = \frac{(-1)^N}{(N-1)!}
\int_0^1 \frac{\ln^{N-1} \xi}{\xi - z^{-1}} \,d\xi.
$$
The expressions for the other ladder integrals are similar.

By contrast, we have very simple expressions for the operators
$L^{(1)}$ and $L^{(2)}$ on ${\cal H}^+ \otimes {\cal H}^+$.
The operator $L^{(1)}$ is just the projection of ${\cal H}^+ \otimes {\cal H}^+$
onto its first irreducible component $(\rho_1,\Zh^+)$ in the decomposition
(\ref{decomp-intro}).
And $L^{(2)}$ acts on each irreducible component of
${\cal H}^+ \otimes {\cal H}^+$ by multiplication by a scalar, so that if
$x \in {\cal H}^+ \otimes {\cal H}^+$ belongs to an irreducible component
isomorphic to $(\rho_k,\Zh^+ \otimes \BB C^{k \times k})$
in the decomposition (\ref{decomp-intro}), then
$$
L^{(2)}(x) = \mu^{(2)}_k x, \qquad \text{where} \qquad
\mu^{(2)}_k =
\begin{cases}
1 & \text{if $k=1$;} \\
\frac{(-1)^{k+1}}{k(k-1)} & \text{if $k \ge 2$.}
\end{cases}
$$

Thus we have a representation-theoretic interpretation of an infinite family
of Feynman diagrams, and it is reasonable to expect that an even larger class
of Feynman diagrams can be interpreted in the same context.
Finally, we comment that it is not really necessary to use quaternionic
setting to interpret the box diagrams and the corresponding integrals
-- one could do the same in the setting of analytic functions of
four variables instead.
However, the vacuum polarization diagram does require quaternionic analysis.
Also, this article uses results that have already been stated and proved
in quaternionic setting. For these reasons we continue to use quaternions.

The paper is organized as follows. In Section \ref{preliminaries}
we establish our notations and state relevant results from quaternionic
analysis. In Section \ref{results-summary} we state more recent results
from \cite{FL3} and \cite{L} that are used in the proofs.
In Section \ref{fd-section} we review the box diagrams and the corresponding
conformal four-point integrals, state the magic identities and the main result
(Theorem \ref{main-thm}).
In Section \ref{proof-section} we prove Theorem \ref{main-thm}, first,
in the case of ladder diagrams, and then in general.

\section{Preliminaries}  \label{preliminaries}

In this section we establish notations and state relevant results from
quaternionic analysis. We mostly follow our previous papers \cite{FL1},
\cite{FL2} and \cite{L}.
A contemporary review of quaternionic analysis can be found in \cite{Su}.
Quaternionic analysis also has many applications in physics
(see, for instance, \cite{GT}).

\subsection{Complexified Quaternions $\HC$ and the Conformal Group $GL(2,\HC)$}

We recall some notations from \cite{FL1}.
Let $\HC$ denote the space of complexified quaternions:
$\HC = \BB H \otimes \BB C$, it can be identified with the algebra of
$2 \times 2$ complex matrices:
$$
\HC = \BB H \otimes \BB C \simeq \biggl\{
Z = \begin{pmatrix} z_{11} & z_{12} \\ z_{21} & z_{22} \end{pmatrix}
; \: z_{ij} \in \BB C \biggr\}
= \biggl\{ Z= \begin{pmatrix} z^0-iz^3 & -iz^1-z^2 \\ -iz^1+z^2 & z^0+iz^3
\end{pmatrix} ; \: z^k \in \BB C \biggr\}.
$$
For $Z \in \HC$, we write
$$
N(Z) = \det \begin{pmatrix} z_{11} & z_{12} \\ z_{21} & z_{22} \end{pmatrix}
= z_{11}z_{22}-z_{12}z_{21} = (z^0)^2 + (z^1)^2 + (z^2)^2 + (z^3)^2
$$
and think of it as the norm of $Z$.
We realize $U(2)$ as
$$
U(2) = \{ Z \in \HC ;\: Z^*=Z^{-1} \},
$$
where $Z^*$ denotes the complex conjugate transpose of a complex matrix $Z$.
For $R>0$, we set
$$
U(2)_R = \{ RZ ;\: Z \in U(2) \} \quad \subset \HC
$$
and orient it as in \cite{FL1}, so that
$$
\int_{U(2)_R} \frac{dV}{N(Z)^2} = -2\pi^3 i,
$$
where $dV$ is a holomorphic 4-form
$$
dV = dz^0 \wedge dz^1 \wedge dz^2 \wedge dz^3
= \frac14 dz_{11} \wedge dz_{12} \wedge dz_{21} \wedge dz_{22}.
$$
Recall that a group $GL(2,\HC) \simeq GL(4,\BB C)$ acts on $\HC$ by fractional
linear (or conformal) transformations:
\begin{equation}  \label{conformal-action}
h: Z \mapsto (aZ+b)(cZ+d)^{-1} = (a'-Zc')^{-1}(-b'+Zd'),
\qquad Z \in \HC,
\end{equation}
where
$h = \bigl(\begin{smallmatrix} a & b \\ c & d \end{smallmatrix}\bigr)
\in GL(2,\HC)$ and 
$h^{-1} = \bigl(\begin{smallmatrix} a' & b' \\ c' & d' \end{smallmatrix}\bigr)$.

\subsection{Harmonic Functions on $\HC$}

As in Section 2 of \cite{FL2}, we consider the space $\widetilde{\cal H}$
consisting of $\BB C$-valued functions on $\HC$ (possibly with singularities)
that are holomorphic with respect to the complex variables
$z_{11},z_{12},z_{21},z_{22}$ and harmonic, i.e. annihilated by
$$
\square 
= 4\biggl( \frac{\partial^2}{\partial z_{11}\partial z_{22}}
- \frac{\partial^2}{\partial z_{12}\partial z_{21}} \biggr)
= \frac{\partial^2}{(\partial z^0)^2} + \frac{\partial^2}{(\partial z^1)^2}
+ \frac{\partial^2}{(\partial z^2)^2} + \frac{\partial^2}{(\partial z^3)^2}.
$$
Then the conformal group $GL(2,\HC)$ acts on $\widetilde{\cal H}$ by two
slightly different actions:
\begin{align*}
\pi^0_l(h): \: \phi(Z) \quad &\mapsto \quad \bigl( \pi^0_l(h)\phi \bigr)(Z) =
\frac 1{N(cZ+d)} \cdot \phi \bigl( (aZ+b)(cZ+d)^{-1} \bigr),  \\
\pi^0_r(h): \: \phi(Z) \quad &\mapsto \quad \bigl( \pi^0_r(h)\phi \bigr)(Z) =
\frac 1{N(a'-Zc')} \cdot \phi \bigl( (a'-Zc')^{-1}(-b'+Zd') \bigr),
\end{align*}
where
$h = \bigl(\begin{smallmatrix} a' & b' \\ c' & d' \end{smallmatrix}\bigr)
\in GL(2,\HC)$ and
$h^{-1} = \bigl(\begin{smallmatrix} a & b \\ c & d \end{smallmatrix}\bigr)$.
These two actions coincide on $SL(2,\HC) \simeq SL(4,\BB C)$
which is defined as the connected Lie subgroup of $GL(2,\HC)$ with Lie algebra
$$
\mathfrak{sl}(2,\HC) = \{ x \in \mathfrak{gl}(2,\HC) ;\: \re (\tr x) =0 \}
\simeq \mathfrak{sl}(4,\BB C).
$$

We introduce two spaces of harmonic polynomials:
$$
{\cal H}^+ = \widetilde{\cal H} \cap \BB C[z_{11},z_{12},z_{21},z_{22}],
$$
$$
{\cal H} = \widetilde{\cal H} \cap \BB C[z_{11},z_{12},z_{21},z_{22}, N(Z)^{-1}]
$$
and the space of harmonic polynomials regular at infinity:
$$
{\cal H}^- = \bigl\{ \phi \in \widetilde{\cal H};\:
N(Z)^{-1} \cdot \phi(Z^{-1}) \in {\cal H}^+ \bigr\}.
$$
Then
$$
{\cal H} = {\cal H}^- \oplus {\cal H}^+.
$$
Differentiating the actions $\pi^0_l$ and $\pi^0_r$, we obtain actions of
$\mathfrak{gl}(2,\HC) \simeq \mathfrak{gl}(4,\BB C)$ which preserve
the spaces ${\cal H}$, ${\cal H}^-$ and ${\cal H}^+$.
By abuse of notation, we denote these Lie algebra actions by
$\pi^0_l$ and $\pi^0_r$ respectively.
They are described in Subsection 3.2 of \cite{FL2}.

By Theorem 28 in \cite{FL1}, for each $R>0$, we have a
bilinear pairing between $(\pi^0_l, {\cal H})$ and $(\pi^0_r, {\cal H})$:
\begin{equation}  \label{H-pairing}
(\phi_1,\phi_2)_R = \frac 1{2\pi^2}
\int_{Z \in S^3_R} (\degt \phi_1)(Z) \cdot \phi_2(Z) \,\frac{dS}R,
\qquad \phi_1, \phi_2 \in {\cal H},
\end{equation}
where $S^3_R \subset \BB H$ is the three-dimensional sphere of radius $R$
centered at the origin
$$
S^3_R = \{ X \in \BB H ;\: N(X)=R^2 \},
$$
$dS$ denotes the usual Euclidean volume element on $S^3_R$, and
$\degt$ denotes the degree operator plus identity:
$$
\degt f = f + \deg f = f + z_{11}\frac{\partial f}{\partial z_{11}} +
z_{12}\frac{\partial f}{\partial z_{12}} + z_{21}\frac{\partial f}{\partial z_{21}}
+ z_{22}\frac{\partial f}{\partial z_{22}}.
$$
When this pairing is restricted to ${\cal H}^+ \times {\cal H}^-$,
it is $\mathfrak{gl}(2,\HC)$-invariant, independent of the choice of $R>0$,
non-degenerate and antisymmetric
$$
(\phi_1,\phi_2)_R = - (\phi_2,\phi_1)_R,
\qquad \phi_1 \in {\cal H}^+, \: \phi_2 \in {\cal H}^-.
$$

When restricted to $\mathfrak{u}(2,2)$, the representations
$(\pi^0_l, {\cal H}^+)$ and $(\pi^0_r, {\cal H}^+)$ become irreducible unitary
with respect to the inner product
\begin{equation}  \label{inner-prod}
\langle \phi_1,\phi_2 \rangle_{inn.\: prod.} =
\int_{Z \in S^3_1} (\widetilde{\deg} \phi_1)(Z) \cdot \B{\phi_2}(Z) \,dS,
\qquad \phi_1,\phi_2 \in {\cal H}^+,
\end{equation}
(Theorem 28 in \cite{FL1}).

We conclude this subsection with an analogue of the Poisson formula
(Theorem 34 in \cite{FL1}). It involves a certain open region $\BB D^+_R$
in $\HC$ which will be defined in (\ref{D_R}).

\begin{thm} \label{Poisson}
Let $R>0$ and let $\phi \in \widetilde{\cal H}$ be a harmonic function
with no singularities on the closure of $\BB D^+_R$, then
$$
\phi(W) = \biggl( \phi,\frac1{N(Z-W)} \biggr)_R =
\frac 1{2\pi^2} \int_{Z \in S^3_R} \frac{(\degt \phi)(Z)}{N(Z-W)}
\,\frac{dS}R, \qquad \forall W \in \BB D^+_R.
$$
\end{thm}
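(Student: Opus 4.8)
The plan is to reduce the Poisson formula for $\BB D^+_R$ to the pairing identity $\phi(W) = (\phi, N(Z-W)^{-1})_R$, and then to verify that identity by a combination of the reproducing property of the kernel $N(Z-W)^{-1}$ and an invariance/continuity argument. First I would observe that the second equality in the statement is just the definition (\ref{H-pairing}) of the pairing $(\,\cdot\,,\cdot\,)_R$ applied to $\phi_1 = \phi$ and $\phi_2(Z) = N(Z-W)^{-1}$, so the whole content is the first equality. Next I would want to know that, for $W \in \BB D^+_R$, the function $Z \mapsto N(Z-W)^{-1}$ indeed lies in (a completion of) ${\cal H}$ in the variable $Z$: it is holomorphic and harmonic away from the locus $N(Z-W)=0$, and the defining property of $\BB D^+_R$ (to be given in (\ref{D_R})) should be precisely that this singular locus does not meet $S^3_R$, and moreover that $N(Z-W)^{-1}$, expanded appropriately, decomposes into the ${\cal H}^+$ and ${\cal H}^-$ pieces in a way compatible with the pairing.

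The key step is the reproducing identity. Here I would invoke the standard expansion of the Cauchy--Fueter / Poisson kernel: on the region where $W$ is "inside" relative to the sphere $S^3_R$, one has an absolutely convergent expansion
$$
\frac{1}{N(Z-W)} = \sum_{k} (\text{matrix coefficient of } Z) \cdot (\text{matrix coefficient of } W),
$$
i.e.\ a decomposition into bihomogeneous harmonic pieces, where one factor is a polynomial in $W$ (an element of ${\cal H}^+$ in $W$) and the other is the corresponding element of ${\cal H}^-$ in $Z$. This is exactly the matrix-coefficient expansion underlying Theorem 28 of \cite{FL1}. Pairing $\phi$ (which is in ${\cal H}^+$ in $Z$, having no singularities on $\overline{\BB D^+_R}$) against this expansion term by term, and using that the pairing (\ref{H-pairing}) restricted to ${\cal H}^+ \times {\cal H}^-$ is nondegenerate, $\mathfrak{gl}(2,\HC)$-invariant, independent of $R$, and that the dual basis to a basis of ${\cal H}^+$ under this pairing is the corresponding basis of ${\cal H}^-$, collapses the sum to $\phi(W)$. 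Equivalently, and perhaps cleaner to write: one first checks the formula at $W=0$ (where $N(Z)^{-1}$ is, up to normalization, the identity element under the pairing, and the constant $2\pi^2$ together with the volume normalization $\int_{U(2)_R} dV/N(Z)^2 = -2\pi^3 i$ pins down the constant), then uses conformal covariance of both sides under the subgroup of $GL(2,\HC)$ preserving the relevant structure to move $W$ to an arbitrary point of $\BB D^+_R$, and finally uses continuity in $W$ together with connectedness of $\BB D^+_R$ to conclude.

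The main obstacle I anticipate is controlling the region $\BB D^+_R$ and the convergence of the kernel expansion on it: unlike the Euclidean case, $\HC$ is the complexification, $N$ is a nondegenerate complex quadratic form rather than a positive-definite one, and the "inside" of $S^3_R$ is not simply a ball, so one must argue carefully that for $W \in \BB D^+_R$ the expansion of $N(Z-W)^{-1}$ converges uniformly for $Z \in S^3_R$ and that no singularity of the kernel crosses the cycle of integration. Once that geometric input (essentially the content of the definition (\ref{D_R}) and of Theorem 34 in \cite{FL1}) is in hand, the algebraic part --- term-by-term pairing and identification of constants --- is routine. I would therefore structure the write-up as: (1) reduce to the first equality; (2) recall the definition of $\BB D^+_R$ and the kernel expansion, citing \cite{FL1}; (3) do the term-by-term pairing using the properties of (\ref{H-pairing}); (4) check the normalization constant on the single function $\phi \equiv 1$ or via $W=0$.
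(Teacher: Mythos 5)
This theorem is not proved in the paper at all: it is imported verbatim as Theorem 34 of \cite{FL1}, so there is no internal proof to compare against. Your sketch is essentially the standard argument used there --- expand the kernel $N(Z-W)^{-1}$ into matrix coefficients (the expansion (\ref{1/N-expansion}), arranged so that the $W$-dependence sits in ${\cal H}^+$ and the $Z$-dependence in ${\cal H}^-$, which converges precisely when $W \in \BB D^+_R$ and $Z \in S^3_R$), pair term by term using (\ref{H-orthogonality}), and the sum collapses to $\phi(W)$. One small correction: $\phi$ is a general element of $\widetilde{\cal H}$ regular on the closure of $\BB D^+_R$, not an element of ${\cal H}^+$ as you write at one point; the hypothesis of regularity on the closure is exactly what lets you expand $\phi$ itself into a series of harmonic polynomials converging uniformly on $S^3_R$, after which the term-by-term pairing is justified. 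With that adjustment the argument is complete; the alternative route you mention (check at $W=0$, then move $W$ by the $U(2,2)_R$-action and use Lemma~\ref{conformal}-type covariance of both sides) also works but is not what \cite{FL1} does.
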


\subsection{Representation $(\rho_1,\Zh)$ of $\mathfrak{gl}(2,\HC)$}

Let $\widetilde{\Zh}$ denote the space of $\BB C$-valued functions on $\HC$
(possibly with singularities) which are holomorphic with respect to the
complex variables $z_{11}$, $z_{12}$, $z_{21}$, $z_{22}$.
We recall the action of $GL(2,\HC)$ on $\widetilde{\Zh}$ given by
equation (49) in \cite{FL1}:
\begin{equation*}
\rho_1(h): \: f(Z) \quad \mapsto \quad \bigl( \rho_1(h)f \bigr)(Z) =
\frac {f \bigl( (aZ+b)(cZ+d)^{-1} \bigr)}{N(cZ+d) \cdot N(a'-Zc')},
\end{equation*}
where
$h = \bigl(\begin{smallmatrix} a' & b' \\ c' & d' \end{smallmatrix}\bigr)
\in GL(2,\HC)$ and 
$h^{-1} = \bigl(\begin{smallmatrix} a & b \\ c & d \end{smallmatrix}\bigr)$.
Differentiating the $\rho_1$-action, we obtain an action
(still denoted by $\rho_1$) of $\mathfrak{gl}(2,\HC)$ which preserves spaces
\begin{align}
\Zh^+ &= \{\text{polynomial functions on $\HC$}\}
= \BB C[z_{11},z_{12},z_{21},z_{22}] \qquad \text{and}  \label{zh+} \\
\Zh &= \bigl\{\text{polynomial functions on
$\{ Z \in \HC ;\: N(Z) \ne 0 \}$}\bigr\}
= \BB C[z_{11},z_{12},z_{21},z_{22}, N(Z)^{-1}].  \label{zh}
\end{align}

Recall Proposition 69 from \cite{FL1}:

\begin{prop}
The representation $(\rho_1,\Zh)$ of $\mathfrak{gl}(2,\HC)$
has a non-degenerate symmetric bilinear pairing
\begin{equation}  \label{pairing}
\langle f_1,f_2 \rangle =
\frac i{2\pi^3} \int_{Z \in U(2)_R} f_1(Z) \cdot f_2(Z) \,dV,
\qquad f_1, f_2 \in \Zh.
\end{equation}
This bilinear pairing is $\mathfrak{gl}(2,\HC)$-invariant and
independent of the choice of $R>0$.
\end{prop}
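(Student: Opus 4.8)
The plan is to establish, in turn, symmetry, independence of $R>0$, $\mathfrak{gl}(2,\HC)$-invariance, and non-degeneracy, with the analytic content reduced to two facts: Stokes' theorem for closed holomorphic top-forms on $\Omega:=\{Z\in\HC;\ N(Z)\neq0\}$, and the conformal transformation law for the volume form $dV$. Symmetry is immediate since $f_1(Z)f_2(Z)=f_2(Z)f_1(Z)$. For independence of $R$: for $f_1,f_2\in\Zh$ the $4$-form $\omega:=f_1(Z)f_2(Z)\,dV$ is holomorphic, hence closed, on $\Omega$; the chain $\{tZ;\ t\in[R',R],\ Z\in U(2)\}$ has boundary $U(2)_R-U(2)_{R'}$ and stays in $\Omega$ because $N(tZ)=t^2\det Z\neq0$ for $Z\in U(2)$, $t>0$; so $\int_{U(2)_R}\omega=\int_{U(2)_{R'}}\omega$ by Stokes. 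More generally, $\int_C\omega$ depends only on the homology class of $C$ in $\Omega$.

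For $\mathfrak{gl}(2,\HC)$-invariance it suffices to check the infinitesimal identity $\langle\rho_1(X)f_1,f_2\rangle+\langle f_1,\rho_1(X)f_2\rangle=0$ on a $\BB C$-spanning set of $\mathfrak{gl}(2,\HC)$, and I would use the translation, the linear, and the special conformal generators (the last obtained from translations by conjugating with the inversion $J\colon Z\mapsto Z^{-1}$). For a translation generator the factor produced by $\rho_1$ is trivial and $\rho_1(X)$ is a constant directional derivative $\partial_b$, so the identity reduces to $\int_{U(2)_R}\partial_b(f_1f_2)\,dV=0$, which is Stokes' theorem applied to the $3$-form $(f_1f_2)\,\iota_{\partial_b}dV$ (smooth on the closed manifold $U(2)_R\subset\Omega$). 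For a linear generator $Z\mapsto AZ+ZB$ with $A,B\in\mathfrak{u}(2)$, the flow $Z\mapsto e^{tA}Ze^{tB}$ maps $U(2)_R$ onto itself, its Jacobian on $\HC$ is $N(e^{tA})^2N(e^{tB})^2$, and this exactly cancels the $\rho_1$-weight; by $\BB C$-linearity of the identity in $X$, this extends from the real form $\mathfrak{u}(2)\oplus\mathfrak{u}(2)$ to the whole linear part. For the inversion $J$, which carries $U(2)_R$ to $U(2)_{1/R}$ with Jacobian $N(Z)^{-4}$ (again cancelling the $\rho_1$-weight), invariance under $J$ follows together with the already-proved independence of $R$; hence the special conformal generators are covered too. (Equivalently, one may prove $GL(2,\HC)$-invariance directly by the substitution $W=h\cdot Z$, using the Jacobian $dV_W=dV_Z/\!\big(N(cZ+d)^2N(a'-Zc')^2\big)$ and the homology invariance above, and then differentiate.)

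For non-degeneracy I would pass to an explicit description of the integral. On $U(2)_R$ one has $\bar z^k=\frac{R^2}{N(Z)}\,z^k$ for $k=0,1,2,3$, so $U(2)_R=\{e^{i\theta}X;\ X\in S^3_R,\ \theta\in\BB R/\pi\BB Z\}$ and $dV$ restricts to $U(2)_R$ as a nonzero constant multiple of $e^{4i\theta}\,d\theta\wedge dS$. Since $U(2)_R$ is a totally real submanifold of $\HC$ of full real dimension, a nonzero $f\in\Zh$ does not vanish on $U(2)_R$; writing its restriction as $e^{i\theta d}q(X)$ on each homogeneity component of degree $d$ (after clearing denominators, $q$ a polynomial restricted to $S^3_R$), one checks that pairing $f$ against a suitable $g\in\Zh$ — one whose restriction to $U(2)_R$ is $e^{-i\theta(d+4)}\overline{q(X)}$, obtained from the ``conjugate-reflected'' polynomial times an appropriate power of $N(Z)^{-1}$ — yields $\langle f,g\rangle$ equal to a positive multiple of $\int_{S^3_R}|q(X)|^2\,dS>0$. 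Hence the pairing is non-degenerate. (Alternatively, once invariance is in hand, non-degeneracy can be deduced from the known decomposition of $(\rho_1,\Zh)$ via Schur's lemma.)

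I expect the non-degeneracy step to be the part with real content: it requires the precise restriction of $dV$ to $U(2)_R$, the totally-real uniqueness principle, and some care with the powers of $N(Z)^{-1}$ when identifying the dual element, whereas symmetry is formal and the independence of $R$ and the $\mathfrak{gl}(2,\HC)$-invariance are routine given the conformal Jacobian for $dV$ (as in \cite{FL1}) and the Stokes/homology principle above — the only mild subtlety there being that, for the invariance proof, one should either work with the generators above (whose cycles are round spheres $U(2)_{R'}$, up to unitary motions) or else justify deforming $h\cdot U(2)_R$ inside $\Omega$ for general $h$.
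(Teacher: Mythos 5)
Your proposal is essentially correct, but it takes a genuinely different route from the paper: the paper does not prove this proposition at all --- it simply recalls it as Proposition 69 of \cite{FL1} --- and the proof there (and the one implicit in the machinery quoted in Subsection 2.4 of this paper) is algebraic: the functions (\ref{Zh-basis}) form a vector space basis of $\Zh$, and the orthogonality relations (\ref{orthogonality}) exhibit the pairing in this basis as a scaled permutation matrix, which gives non-degeneracy at once; invariance and independence of $R$ are checked in \cite{FL1} by the same change-of-variables and Stokes considerations you use. Your scheme is more self-contained and analytic: Stokes/homology in $\{N(Z)\neq 0\}$ for the $R$-independence and for the translation generators, the Jacobian $dV\mapsto N(cZ+d)^{-2}N(a'-Zc')^{-2}\,dV$ for the linear part and for the inversion $J$, and conjugation by $J$ for the special conformal generators. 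This is a complete and correct plan for invariance: translations, the two linear blocks and the special conformal generators do span $\mathfrak{gl}(2,\HC)$ over $\BB C$, and the infinitesimal invariance identity is indeed $\BB C$-linear in $X$, so passing from $\mathfrak{u}(2)\oplus\mathfrak{u}(2)$ to the full linear part is legitimate. What the cited route buys is that non-degeneracy is free once one has the basis and the orthogonality relations, which the paper needs elsewhere anyway; what your route buys is independence from the matrix-coefficient calculus.

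One point in your non-degeneracy step needs patching. You pick a nonzero homogeneous component $f_d$ of $f$ and pair $f$ against a $g$ of homogeneity degree $-d-4$ built from $\overline{q}$, and then assert that $\langle f,g\rangle$ is a nonzero multiple of $\int_{S^3_R}|q|^2\,dS$; this silently discards the contributions of the other components $f_{d'}$. The $\theta$-integration alone does not dispose of them: $\int_0^{\pi}e^{i(d'-d)\theta}\,d\theta$ vanishes only when $d'-d$ is a nonzero even integer, not when it is odd. The cross terms do vanish, but by the scaling argument you have already established: replacing $Z$ by $RZ$ shows that the pairing of components of degrees $d'$ and $d''$ scales like $R^{d'+d''+4}$, so $R$-independence forces it to be zero unless $d'+d''=-4$, and your $g$ has degree $-d-4$. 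Adding this one line closes the gap. (Two smaller remarks: the final constant works out to $-i\pi R$ rather than a positive number, though only non-vanishing matters; and the parenthetical appeal to Schur's lemma is not by itself sufficient, since invariance only shows that the radical of the pairing is a subrepresentation --- one must still exhibit a nonzero pairing on each constituent, which is exactly what (\ref{orthogonality}) does.)
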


\subsection{The Group $\HC^{\times}$ and Its Matrix Coefficients}  \label{matrix-coeff-subsection}

We denote by $\HC^{\times}$ the group of invertible complexified quaternions:
$$
\HC^{\times} = \{ Z \in \HC ;\: N(Z) \ne 0 \} \simeq GL(2,\BB C).
$$
We denote by $(\tau_{\frac12},\BB S)$ the tautological 2-dimensional
representation of $\HC^{\times}$.
Then, for $l=0,\frac12,1,\frac32, \dots$, we denote by $(\tau_l,V_l)$ the
$2l$-th symmetric power product of $(\tau_{\frac12},\BB S)$.
(In particular, $(\tau_0,V_0)$ is the trivial one-dimensional representation.)
Thus, each $(\tau_l,V_l)$ is an irreducible representation of $\HC^{\times}$
of dimension $2l+1$.
A concrete realization of $(\tau_l,V_l)$ as well as an isomorphism
$V_l \simeq \BB C^{2l+1}$ suitable for our purposes are described in
Subsection 2.5 of \cite{FL1}.

Recall the matrix coefficient functions of $\tau_l(Z)$ described by
equation (27) of \cite{FL1} (cf. \cite{V}):
\begin{equation*}  
t^l_{n\,\underline{m}}(Z) = \frac 1{2\pi i}
\oint (sz_{11}+z_{21})^{l-m} (sz_{12}+z_{22})^{l+m} s^{-l+n} \,\frac{ds}s,
\qquad
\begin{matrix} l = 0, \frac12, 1, \frac32, \dots, \\ m,n \in \BB Z +l, \\
 -l \le m,n \le l, \end{matrix}
\end{equation*}
$Z=\bigl(\begin{smallmatrix} z_{11} & z_{12} \\
z_{21} & z_{22} \end{smallmatrix}\bigr) \in \HC$,
the integral is taken over a loop in $\BB C$ going once around the origin
in the counterclockwise direction.
We regard these functions as polynomials on $\HC$. For example,
\begin{equation}  \label{t-special}
t^l_{-l\,\underline{-l}}(Z) = (z_{11})^{2l}, \qquad
t^l_{-l\,\underline{l}}(Z) = (z_{12})^{2l}, \qquad
t^l_{l\,\underline{-l}}(Z) = (z_{21})^{2l}, \qquad
t^l_{l\,\underline{l}}(Z) = (z_{22})^{2l}.
\end{equation}


We have the following orthogonality relations with respect to the pairing
(\ref{H-pairing}):
\begin{equation}  \label{H-orthogonality}
\bigl( t^{l'}_{n'\,\underline{m'}}(Z), t^l_{m\underline{n}}(Z^{-1}) \cdot N(Z)^{-1} \bigr)_R
= -\bigl(t^l_{m\underline{n}}(Z^{-1}) \cdot N(Z)^{-1},t^{l'}_{n'\,\underline{m'}}(Z)\bigr)_R
= \delta_{ll'} \delta_{mm'} \delta_{nn'},
\end{equation}
the following orthogonality relations with respect to the inner product
(\ref{inner-prod}):
\begin{equation}  \label{H-unitary-orthogonality}
\bigl\langle t^l_{n\underline{m}}(Z), t^{l'}_{n'\,\underline{m'}}(Z)
\bigr\rangle_{inn.\: prod.}
= \frac{(l-m)!(l+m)!}{(l-n)!(l+n)!} \delta_{ll'} \delta_{mm'} \delta_{nn'},
\end{equation}
and similar orthogonality relations with respect to the pairing (\ref{pairing}):
\begin{equation}  \label{orthogonality}
\bigl\langle t^{l'}_{n'\,\underline{m'}}(Z) \cdot N(Z)^{k'},
t^l_{m\underline{n}}(Z^{-1}) \cdot N(Z)^{-k-2} \bigr\rangle
= \frac1{2l+1} \delta_{kk'}\delta_{ll'} \delta_{mm'} \delta_{nn'},
\end{equation}
where the indices $k,l,m,n$ are
$l = 0, \frac12, 1, \frac32, \dots$, $m,n \in \BB Z +l$, $-l \le m,n \le l$,
$k \in \BB Z$ and similarly for $k',l',m',n'$ (see, for example, \cite{V}).

One advantage of working with these functions is that they form $K$-type bases
of various spaces:

\begin{prop} [Proposition 19 in \cite{FL1}, Proposition 5 in \cite{FL3} and
Corollary 6 in \cite{FL3}]
\begin{enumerate}
\item
The functions 
$$
t^l_{n\,\underline{m}}(Z), \qquad
l=0, \frac12, 1, \frac32, \dots, \quad m,n=-l,-l+1,\dots,l,
$$
form a vector space basis of
${\cal H}^+ = \{ \phi \in \Zh^+;\: \square\phi=0 \}$;
\item
The functions 
$$
t^l_{n\,\underline{m}}(Z) \cdot N(Z)^{-(2l+1)}, \qquad
l=0, \frac12, 1, \frac32, \dots, \quad m,n=-l,-l+1,\dots,l,
$$
form a vector space basis of ${\cal H}^-$;
\item
The functions 
$$
t^l_{n\,\underline{m}}(Z) \cdot N(Z)^k, \qquad
l=0, \frac12, 1, \frac32, \dots, \quad m,n=-l,-l+1,\dots,l, \quad k=0,1,2,\dots,
$$
form a vector space basis of $\Zh^+ = \BB C[z_{11},z_{12},z_{21},z_{22}]$;
\item
The functions 
\begin{equation}  \label{Zh-basis}
t^l_{n\,\underline{m}}(Z) \cdot N(Z)^k, \qquad
l=0, \frac12, 1, \frac32, \dots, \quad m,n=-l,-l+1,\dots,l, \quad k \in\BB Z,
\end{equation}
form a vector space basis of $\Zh = \BB C[z_{11},z_{12},z_{21},z_{22}, N(Z)^{-1}]$.
\end{enumerate}
\end{prop}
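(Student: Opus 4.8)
The plan is to establish the four assertions in turn, extracting the first, second, and fourth from the third, which is the only one carrying real content. Before anything else I would record two facts visible directly from the contour-integral definition of $t^l_{n\,\underline{m}}(Z)$: expanding the integrand shows it is a homogeneous polynomial in $z_{11},z_{12},z_{21},z_{22}$ of degree $(l-m)+(l+m)=2l$, and it is harmonic. For harmonicity I would differentiate under the integral sign: with $\square=4\bigl(\partial_{z_{11}}\partial_{z_{22}}-\partial_{z_{12}}\partial_{z_{21}}\bigr)$, both $\partial_{z_{11}}\partial_{z_{22}}$ and $\partial_{z_{12}}\partial_{z_{21}}$ applied to $(sz_{11}+z_{21})^{l-m}(sz_{12}+z_{22})^{l+m}s^{-l+n-1}$ produce the same expression $(l-m)(l+m)\,s\,(sz_{11}+z_{21})^{l-m-1}(sz_{12}+z_{22})^{l+m-1}s^{-l+n-1}$, so they cancel and $\square t^l_{n\,\underline{m}}=0$. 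Consequently the functions in the first assertion lie in ${\cal H}^+$, those in the third lie in $\Zh^+$, and those in the fourth lie in $\Zh$.

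The heart of the matter is the third assertion, that $N(Z)^{k}t^l_{n\,\underline{m}}(Z)$, as $k$ runs over $\BB Z_{\ge0}$ and $(l,m,n)$ over the usual index set, is a basis of $\BB C[z_{11},z_{12},z_{21},z_{22}]=\BB C[\M_{2\times2}(\BB C)]$. The conceptual reason is the classical decomposition of $\BB C[GL(2,\BB C)]$ — equivalently the Cauchy identity, or Peter--Weyl for the reductive monoid $\M_{2\times2}$ — under left and right translation: $\BB C[\M_{2\times2}]$ is the direct sum, over $k\in\BB Z_{\ge0}$ and $l\in\frac12\BB Z_{\ge0}$, of the span of the matrix coefficients of the irreducible representation $\det^{\otimes k}\otimes\tau_l$ of $\HC^{\times}\simeq GL(2,\BB C)$, and those matrix coefficients are exactly the $N(Z)^{k}t^l_{n\,\underline{m}}(Z)$. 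To keep the argument self-contained, however, I would instead argue degree by degree: in a fixed total degree $d$ the functions $N(Z)^{k}t^l_{n\,\underline{m}}(Z)$ with $2l+2k=d$ are linearly independent, since by the orthogonality relations (\ref{orthogonality}) pairing such a function (under (\ref{pairing})) against $t^l_{m\,\underline{n}}(Z^{-1})N(Z)^{-k-2}$ isolates its coefficient; and their number $\sum_{k=0}^{\lfloor d/2\rfloor}(d-2k+1)^2$ equals $\binom{d+3}{3}$, the dimension of the degree-$d$ part of $\BB C[z_{11},z_{12},z_{21},z_{22}]$, so they span.

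The remaining assertions then follow cheaply. For the first, the span of $\{t^l_{n\,\underline{m}}:-l\le m,n\le l\}$ sits inside the degree-$2l$ harmonic polynomials, these $(2l+1)^2$ functions are independent by the orthogonality relations (\ref{H-unitary-orthogonality}), and $\binom{2l+3}{3}-\binom{2l+1}{3}=(2l+1)^2$ is exactly the dimension of that space, so they are a basis. (One could equally deduce the first assertion from the third using the Fischer decomposition $\Zh^+=\bigoplus_{k\ge0}N(Z)^{k}{\cal H}^+$.) For the second, from $Z^{-1}=N(Z)^{-1}\operatorname{adj}(Z)$ and the homogeneity of $\tau_l$ one gets $t^l_{n\,\underline{m}}(Z^{-1})N(Z)^{2l}=t^l_{n\,\underline{m}}(\operatorname{adj}Z)$, a degree-$2l$ polynomial obtained from $t^l_{n\,\underline{m}}$ by a linear substitution preserving both $N(Z)$ and $\square$, hence harmonic; by the definition of ${\cal H}^-$ this puts $t^l_{n\,\underline{m}}(Z)N(Z)^{-(2l+1)}$ in ${\cal H}^-$, and since the inversion $\phi(Z)\mapsto N(Z)^{-1}\phi(Z^{-1})$ is an involution interchanging ${\cal H}^+$ and ${\cal H}^-$ and sending the degree-$2l$ harmonics onto $N(Z)^{-(2l+1)}$ times themselves, the basis of ${\cal H}^+$ from the first assertion produces a basis of ${\cal H}^-$ of the stated form. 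For the fourth, every element of $\Zh=\BB C[z_{11},z_{12},z_{21},z_{22},N(Z)^{-1}]$ has the form $N(Z)^{-J}f$ with $f\in\Zh^+$ and $J\ge0$; expanding $f$ by the third assertion gives spanning, and multiplying a vanishing finite combination of the functions (\ref{Zh-basis}) by a high enough power of $N(Z)$ reduces linear independence to the third assertion.

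I expect the main obstacle to be the third assertion itself — that every polynomial on $\HC$ decomposes into matrix coefficients of the $\tau_l$ twisted by powers of $N(Z)$. Everything else is a one-line differentiation, a dimension count, or a formal consequence; the genuine input is the Cauchy/Peter--Weyl decomposition of $\BB C[GL(2,\BB C)]$ (with the determinant accounting for the extra parameter $k$), and the orthogonality relations (\ref{orthogonality}) and (\ref{H-unitary-orthogonality}) are precisely what make the self-contained counting version of the argument work.
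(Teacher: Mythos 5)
Your proof is correct. Note that the paper does not actually prove this proposition --- it imports it wholesale from Proposition 19 of \cite{FL1} and Proposition 5 and Corollary 6 of \cite{FL3} --- so there is no internal argument to compare against; your route (harmonicity and homogeneity of $t^l_{n\,\underline{m}}$ by differentiating under the contour integral, then the Peter--Weyl/Cauchy decomposition of $\BB C[z_{11},z_{12},z_{21},z_{22}]$ into matrix coefficients of $\det^k \otimes \tau_l$ made self-contained by the dimension count $\sum_{k}(d-2k+1)^2 = \binom{d+3}{3}$ together with the orthogonality relations, and the inversion involution $\phi(Z) \mapsto N(Z)^{-1}\phi(Z^{-1})$ to transport the basis of ${\cal H}^+$ to ${\cal H}^-$) is essentially the standard argument used in those references. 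The only steps taken on faith are routine: surjectivity of $\square$ from degree-$d$ to degree-$(d-2)$ polynomials (needed for the count $(2l+1)^2$ of degree-$2l$ harmonics), and the fact that the orthogonality relations (\ref{orthogonality}), (\ref{H-unitary-orthogonality}) are independently available as Schur orthogonality for $U(2)$, so no circularity arises.
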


Another advantage is having matrix coefficient expansions such as
those described in Propositions 25, 26 and 27 in \cite{FL1}.
For convenience we restate Proposition 25 from \cite{FL1}:

\begin{prop}
We have the following matrix coefficient expansion
\begin{equation}  \label{1/N-expansion}
\frac 1{N(Z-W)}= N(W)^{-1} \cdot \sum_{l,m,n}
t^l_{m\,\underline{n}}(Z) \cdot t^l_{n\,\underline{m}}(W^{-1}),
\qquad \begin{matrix} l=0,\frac 12, 1, \frac 32,\dots, \\
m,n = -l, -l+1, \dots, l, \end{matrix}
\end{equation}
which converges pointwise absolutely in the region
$\{ (Z,W) \in \HC \times \HC^{\times}; \: ZW^{-1} \in \BB D^+ \}$,
where $\BB D^+$ is an open region in $\HC$ to be defined in (\ref{D}).
\end{prop}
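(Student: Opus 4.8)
\noindent
The plan is to reduce the two‑variable expansion to a one‑variable ``geometric series'' for the characters of $\HC^{\times}$ and then restore the second variable using multiplicativity of matrix coefficients. First I would record the purely algebraic identity
$$
N(Z-W)=N\bigl((ZW^{-1}-I)W\bigr)=N(W)\cdot N(ZW^{-1}-I)=N(W)\cdot N(I-ZW^{-1}),
$$
the last step because negating a $2\times 2$ matrix leaves its determinant unchanged. Thus it suffices to prove that, with $X:=ZW^{-1}$,
$$
\frac1{N(I-X)}=\sum_{l,m,n} t^l_{m\,\underline n}(Z)\cdot t^l_{n\,\underline m}(W^{-1}),\qquad l=0,\tfrac12,1,\dots,\ \ -l\le m,n\le l .
$$
Since each $\tau_l$ is a representation, $\tau_l(ZW^{-1})=\tau_l(Z)\tau_l(W^{-1})$, which in matrix‑coefficient form reads $\sum_n t^l_{m\,\underline n}(Z)\,t^l_{n\,\underline m}(W^{-1})=t^l_{m\,\underline m}(ZW^{-1})$. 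Hence, once absolute convergence is known (so that the order of summation is immaterial), the right‑hand side collapses to $\sum_l\sum_m t^l_{m\,\underline m}(X)=\sum_l \tr\tau_l(X)$, the sum over $l$ of the characters of the representations $(\tau_l,V_l)=\operatorname{Sym}^{2l}(\tau_{\frac12},\BB S)$.

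The second step is the character identity $\sum_l \tr\tau_l(X)=1/N(I-X)$. If $\lambda_1,\lambda_2$ denote the eigenvalues of $X$, then $\tr\tau_l(X)$ equals the complete homogeneous symmetric polynomial $h_{2l}(\lambda_1,\lambda_2)$ — a genuine polynomial in the entries of $X$, since it is symmetric in $\lambda_1,\lambda_2$, so no case distinction between diagonalizable and nondiagonalizable $X$ is needed — whence
$$
\sum_{l=0,\frac12,1,\dots}\tr\tau_l(X)=\sum_{k=0}^{\infty} h_k(\lambda_1,\lambda_2)=\frac1{(1-\lambda_1)(1-\lambda_2)}=\frac1{\det(I-X)}=\frac1{N(I-X)},
$$
the series converging absolutely exactly when $|\lambda_1|<1$ and $|\lambda_2|<1$. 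The region $\BB D^+$ of (\ref{D}) is arranged precisely so that $X=ZW^{-1}\in\BB D^+$ forces this spectral condition. Combining this with the first paragraph and with $N(Z-W)=N(W)\,N(I-ZW^{-1})$ yields the claimed expansion (\ref{1/N-expansion}).

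What remains — and this is the only genuinely technical point — is to justify the rearrangement of the triple series, i.e.\ to prove that $\sum_{l,m,n}|t^l_{m\,\underline n}(Z)|\,|t^l_{n\,\underline m}(W^{-1})|$ converges whenever $ZW^{-1}\in\BB D^+$ (and, ideally, uniformly on compact subsets of that region). I would do this directly from the contour‑integral definition of the matrix coefficients: choosing the radius $r$ of the loop $|s|=r$ appropriately, one bounds $|sz_{11}+z_{21}|$ and $|sz_{12}+z_{22}|$ on the loop and extracts a geometric decay of the form $|t^l_{m\,\underline n}(Z)\,t^l_{n\,\underline m}(W^{-1})|\le C\,\theta^{2l}$ with $\theta<1$ for $(Z,W)$ in the region, so that $\sum_{l,m,n}(2l+1)^2\theta^{2l}<\infty$. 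The main obstacle is exactly this last step: arranging the contour estimate so that the decay rate $\theta$ is governed by the spectral radius of $ZW^{-1}$, thereby matching the convergence region with $\BB D^+$ on the nose; the algebraic identities above are then immediate.
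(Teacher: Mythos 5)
The paper itself gives no proof of this Proposition --- it is restated verbatim from \cite{FL1} (Proposition 25 there) --- so I can only compare your argument with the standard route, which your algebraic skeleton indeed follows: $N(Z-W)=N(W)\,N(1-ZW^{-1})$ by multiplicativity of the determinant, multiplicativity of $\tau_l$ collapses each $(m,n)$-block to $t^l_{m\,\underline{m}}(ZW^{-1})$ and hence to $\operatorname{tr}\tau_l(ZW^{-1})$, and $\sum_l \operatorname{tr}\tau_l(X)=\sum_k h_k(\lambda_1,\lambda_2)=1/\det(1-X)$. All of that is correct.

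The problem is exactly where you locate it, but it is worse than a missing estimate: the bound you hope for cannot exist, and the \emph{unordered} triple sum with absolute values is in fact not convergent on all of $\{ZW^{-1}\in\BB D^+\}$. Two separate issues. First, you slide from the defining condition of $\BB D^+$ (namely $XX^*<1$, i.e.\ all \emph{singular values} of $X=ZW^{-1}$ less than $1$) to the \emph{spectral} condition $|\lambda_1|,|\lambda_2|<1$; for non-normal $X$ these are genuinely different, and neither one controls the triple sum. Second, the products $|t^l_{m\,\underline{n}}(Z)|\cdot|t^l_{n\,\underline{m}}(W^{-1})|$ depend on $Z$ and $W^{-1}$ separately, not only on $ZW^{-1}$. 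Concretely, take
$$
Z=\begin{pmatrix}\epsilon&\epsilon\\0&\epsilon\end{pmatrix},\qquad
W^{-1}=\begin{pmatrix}\epsilon&0\\-\epsilon&\epsilon\end{pmatrix},\qquad
ZW^{-1}=\epsilon^2\begin{pmatrix}0&1\\-1&1\end{pmatrix}.
$$
A direct computation from the contour formula gives
$\sum_{m,n}\bigl|t^l_{m\,\underline{n}}(Z)\bigr|\cdot\bigl|t^l_{n\,\underline{m}}(W^{-1})\bigr|=F_{4l+2}\,\epsilon^{4l}$
(Fibonacci numbers; check $l=\tfrac12$: $3\epsilon^2$, $l=1$: $8\epsilon^4$), which is not summable over $l$ once $\epsilon\ge 2/(1+\sqrt5)\approx 0.618$. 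On the other hand $ZW^{-1}\in\BB D^+$ holds for all $\epsilon^2<2/(1+\sqrt5)$, i.e.\ $\epsilon<0.786$, and the eigenvalues of $ZW^{-1}$ are $\epsilon^2 e^{\pm i\pi/3}$, of modulus $\epsilon^2<1$. So for $0.618\le\epsilon<0.786$ the point lies in the stated region, both your spectral condition and the $\BB D^+$ condition hold, yet the triple sum of absolute values diverges. In particular no estimate $|t^l_{m\,\underline{n}}(Z)\,t^l_{n\,\underline{m}}(W^{-1})|\le C\theta^{2l}$ with $\theta<1$ governed by $ZW^{-1}$ can be true, and the contour-estimate program you outline would fail.

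The repair is to not invoke unordered summation at all. For each fixed $l$ the $(m,n)$-sum is \emph{finite}, so the collapse $\sum_{n}t^l_{m\,\underline{n}}(Z)\,t^l_{n\,\underline{m}}(W^{-1})=t^l_{m\,\underline{m}}(ZW^{-1})$ is an exact algebraic identity requiring no convergence hypothesis; the only infinite series is the one over $l$, and it converges absolutely on $\BB D^+$ because $|\operatorname{tr}\tau_l(X)|=|h_{2l}(\lambda_1,\lambda_2)|\le(2l+1)\max(|\lambda_1|,|\lambda_2|)^{2l}$. The ``pointwise absolute convergence'' in the statement must be read in this grouped sense (blocks indexed by $l$), and your write-up should be restructured so that the character identity is established first as a finite-sum fact and the convergence claim is made only for the $l$-series.
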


\subsection{Subgroups $U(2,2)_R \subset GL(2,\HC)$ and Domains
$\BB D^+_R$, $\BB D^-_R$}

We often regard the group $U(2,2)$ as a subgroup of $GL(2,\HC)$,
as described in Subsection 3.5 of \cite{FL1}. That is
$$
U(2,2) = \Biggl\{ \begin{pmatrix} a & b \\ c & d \end{pmatrix}
\in GL(2,\HC) ;\: a,b,c,d \in \HC,\:
\begin{matrix} a^*a = 1+c^*c \\ d^*d = 1+b^*b \\ a^*b=c^*d \end{matrix}
\Biggr\}.
$$
The maximal compact subgroup of $U(2,2)$ is
\begin{equation*}  
U(2) \times U(2) = \biggl\{
\begin{pmatrix} a & 0 \\ 0 & d \end{pmatrix} \in GL(2, \HC);\:
a,d \in \HC, \: a^*a=d^*d=1 \biggr\}.
\end{equation*}

The group $U(2,2)$ acts on $\HC$ by fractional linear transformations
(\ref{conformal-action}) preserving $U(2) \subset \HC$ and open domains
\begin{equation}  \label{D}
\BB D^+ = \{ Z \in \HC;\: ZZ^*<1 \}, \qquad
\BB D^- = \{ Z \in \HC;\: ZZ^*>1 \},
\end{equation}
where the inequalities $ZZ^*<1$ and $ZZ^*>1$ mean that the matrix $ZZ^*-1$
is negative and positive definite respectively.
The sets $\BB D^+$ and $\BB D^-$ both have $U(2)$ as the Shilov boundary.

Similarly, for each $R>0$, we can define a conjugate of $U(2,2)$
$$
U(2,2)_R = \begin{pmatrix} R & 0 \\ 0 & 1 \end{pmatrix} U(2,2)
\begin{pmatrix} R^{-1} & 0 \\ 0 & 1 \end{pmatrix} \quad \subset GL(2,\HC).
$$
Each group $U(2,2)_R$ is a real form of $GL(2,\HC)$, preserves $U(2)_R$
and open domains
\begin{equation}  \label{D_R}
\BB D^+_R = \{ Z \in \HC ;\: ZZ^*<R^2 \}, \qquad
\BB D^-_R = \{ Z \in \HC ;\: ZZ^*>R^2 \}.
\end{equation}
These sets $\BB D^+_R$ and $\BB D^-_R$ both have $U(2)_R$ as the Shilov boundary.

\section{Summary of Results from \cite{FL3} and \cite{L}}  \label{results-summary}

\subsection{Irreducible Components of $(\rho_1,\Zh)$
and Equivariant Maps  \\
$(\rho_1,\Zh) \to (\pi_l^0, {\cal H}) \otimes (\pi_r^0,{\cal H})$}

First, we state the decomposition theorem:

\begin{thm}[Theorem 7 in \cite{FL3}]
The representation $(\rho_1,\Zh)$ of $\mathfrak{gl}(2,\HC)$ has the
following decomposition into irreducible components:
$$
(\rho_1,\Zh) = (\rho_1,\Zh^-) \oplus (\rho_1,\Zh^0) \oplus (\rho_1,\Zh^+),
$$
where
\begin{align*}
\Zh^+ &= \BB C\text{-span of }
\bigl\{ t^l_{n\,\underline{m}}(Z) \cdot N(Z)^k;\: k \ge 0 \bigr\}, \\
\Zh^- &= \BB C\text{-span of }
\bigl\{ t^l_{n\,\underline{m}}(Z) \cdot N(Z)^k;\: k \le -(2l+2) \bigr\}, \\
\Zh^0 &= \BB C\text{-span of }
\bigl\{ t^l_{n\,\underline{m}}(Z) \cdot N(Z)^k;\: -(2l+1) \le k \le -1 \bigr\}
\end{align*}
(see Figure \ref{decomposition-fig}).
\end{thm}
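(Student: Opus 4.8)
The plan is to exhibit the decomposition concretely on the $K$-type basis (\ref{Zh-basis}) and then check that each of the three subspaces $\Zh^+$, $\Zh^0$, $\Zh^-$ is $\mathfrak{gl}(2,\HC)$-invariant and irreducible. First I would observe that, as a vector space, $\Zh$ splits as the direct sum of the three spans by the mere bookkeeping of the index $k$: every basis element $t^l_{n\,\underline{m}}(Z)\cdot N(Z)^k$ lies in exactly one of the three families according to whether $k\ge 0$, $-(2l+1)\le k\le -1$, or $k\le -(2l+2)$, and these three ranges are disjoint and exhaustive for each fixed $l$. So the content of the theorem is entirely about the $\mathfrak{gl}(2,\HC)$-action, not about the vector-space structure.

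Next I would reduce invariance and irreducibility to a computation with a generating set of $\mathfrak{gl}(2,\HC)$. Since $\mathfrak{gl}(2,\HC)\simeq\mathfrak{gl}(4,\BB C)$ is generated by its Cartan together with a few root vectors, it suffices to understand how a handful of explicit first-order differential operators $\rho_1(x)$ act on $t^l_{n\,\underline{m}}(Z)\cdot N(Z)^k$. The Cartan acts diagonally (the functions $t^l_{n\,\underline{m}}$ are $K$-weight vectors and $N(Z)^k$ shifts the weight in a controlled way), so invariance under the Cartan is immediate. For the root vectors one uses the standard recursion relations among the matrix coefficients $t^l_{n\,\underline{m}}$ — raising/lowering $m$, raising/lowering $n$, and shifting $l$ — together with the Leibniz rule applied to the factor $N(Z)^k$, noting that $\rho_1$ involves the extra conformal weight factors $N(cZ+d)^{-1}N(a'-Zc')^{-1}$ whose infinitesimal form contributes a term proportional to $N(Z)^{-1}$ times a degree-$2$ polynomial. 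The key point to extract from this is: the action of any $\mathfrak{gl}(2,\HC)$ generator sends $t^l_{n\,\underline{m}}\cdot N(Z)^k$ to a combination of terms $t^{l'}_{n'\,\underline{m'}}\cdot N(Z)^{k'}$ in which the quantity $k$ can change by $0$ or $\pm 1$ and $l$ by $0$ or $\pm\frac12$, but in such a way that the \emph{invariants} $k$ (for $\Zh^+$), $k+2l+2$ (for $\Zh^-$), and the bounded band $-(2l+1)\le k\le -1$ (for $\Zh^0$) are each preserved as sign/range conditions. One checks that the action never crosses from $k\ge 0$ to $k\le -1$, never from $k\le -(2l+2)$ into $-(2l+1)\le k$, and that within the middle band the operators that would push $k$ out of $[-(2l+1),-1]$ vanish identically (their coefficients hit a zero at the boundary). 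This gives invariance of all three summands. This boundary-vanishing phenomenon — the statement that the would-be "escape" coefficients are exactly zero on the walls $k=0$, $k=-1$, $k=-(2l+1)$, $k=-(2l+2)$ — is the main obstacle, and I expect it to be the one genuinely computational step; it is the reason the decomposition exists at all, and it can be verified from the explicit formulas for $\rho_1$ of the root vectors in Subsection 3.2 of \cite{FL2} combined with the contour-integral definition of $t^l_{n\,\underline{m}}$.

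Finally, for irreducibility of each piece I would argue that, starting from any nonzero vector, repeated application of raising and lowering operators in $m$ and $n$ reaches one of the "corner" functions in (\ref{t-special}) at some fixed $(l,k)$, and then the $l$-shifting and (for $\Zh^+$, $\Zh^-$) the $k$-shifting operators connect all admissible $(l,k)$; for $\Zh^0$ one uses that the finite band $-(2l+1)\le k\le -1$ is itself connected under the available shifts and that the edge operators identified above, while annihilating vectors trying to leave the band, do not annihilate vectors moving within it. Thus each of $\Zh^+$, $\Zh^0$, $\Zh^-$ is generated by any nonzero element, hence irreducible. Assembling the vector-space direct sum decomposition with the invariance and irreducibility of the three summands yields the theorem. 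Alternatively, one can shortcut irreducibility by invoking the already-known irreducibility of $(\pi^0_l,{\cal H}^\pm)$ and of related modules under the equivariant maps of Section \ref{results-summary}, but the direct $K$-type argument is self-contained and in the same elementary spirit as the rest of the paper.
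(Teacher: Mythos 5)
This statement is imported verbatim from \cite{FL3} (Theorem 7 there) and the present paper gives no proof of it at all --- it is listed in Section \ref{results-summary} precisely as a known result to be used later --- so there is no in-paper argument to compare yours against line by line. That said, your outline follows exactly the $K$-type methodology that \cite{FL1}, \cite{FL3} and \cite{L} use for statements of this kind: the vector-space splitting is indeed pure bookkeeping on the index $k$ relative to $l$, the degree/Clebsch--Gordan count you give (the generators shift $l$ by $0,\pm\frac12$ and $k$ by $0,\pm1$) is right, and the connectivity argument for irreducibility is the standard one. So the approach is sound and almost certainly the same in spirit as the original.

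The one substantive criticism is that your proposal correctly locates the entire content of the theorem --- the vanishing of the ``escape'' coefficients at the four walls $k=0$, $k=-1$, $k=-(2l+1)$, $k=-(2l+2)$ --- and then does not carry it out; as written this is a plan rather than a proof, since without those explicit coefficients one cannot rule out, say, a nonzero map from the middle band into $\Zh^-$. You could reduce the labor considerably: invariance of $\Zh^+$ is immediate because the derived $\rho_1$-action is by differential operators with polynomial coefficients, hence preserves $\BB C[z_{11},z_{12},z_{21},z_{22}]$ with no wall computation needed; and the inversion $f(Z)\mapsto N(Z)^{-2}f(Z^{-1})$ intertwines $\rho_1$ with itself and exchanges $\Zh^+$ with $\Zh^-$ (it sends $t^l_{n\,\underline{m}}(Z)N(Z)^k$ to a multiple of a basis element with $k\mapsto -k-2l-2$), so invariance of $\Zh^-$ is free. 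That leaves only the two walls bounding $\Zh^0$, which is where the genuine computation with the recursions for $t^l_{n\,\underline{m}}$ must be done; alternatively, once $\Zh^\pm$ are known to be invariant and irreducible, the invariance of $\Zh^0$ can be extracted from the nondegenerate $\rho_1$-invariant pairing (\ref{pairing}) together with the orthogonality relations (\ref{orthogonality}), which identify $\Zh^0$ as the annihilator of $\Zh^-\oplus\Zh^+$. Either route would close the gap.
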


\begin{figure}
\begin{center}
\setlength{\unitlength}{1mm}
\begin{picture}(110,70)
\multiput(10,10)(10,0){10}{\circle*{1}}
\multiput(10,20)(10,0){10}{\circle*{1}}
\multiput(10,30)(10,0){10}{\circle*{1}}
\multiput(10,40)(10,0){10}{\circle*{1}}
\multiput(10,50)(10,0){10}{\circle*{1}}
\multiput(10,60)(10,0){10}{\circle*{1}}
\thicklines
\put(60,0){\vector(0,1){70}}
\put(0,10){\vector(1,0){110}}
\thinlines
\put(58,10){\line(0,1){55}}
\put(60,8){\line(1,0){45}}
\put(52,10){\line(0,1){55}}
\put(5,8){\line(1,0){35}}
\put(41.4,11.4){\line(-1,1){36.4}}
\put(48.6,8.6){\line(-1,1){43.6}}
\qbezier(58,10)(58,8)(60,8)
\qbezier(40,8)(43.8,8)(41.4,11.4)
\qbezier(52,10)(52,5.2)(48.6,8.6)
\put(62,67){$2l$}
\put(107,12){$k$}
\put(3,25){$\Zh^-$}
\put(23,62){$\Zh^0$}
\put(102,44){$\Zh^+$}
\end{picture}
\end{center}
\caption{Decomposition of $(\rho_1,\Zh)$ into irreducible components.}
\label{decomposition-fig}
\end{figure}
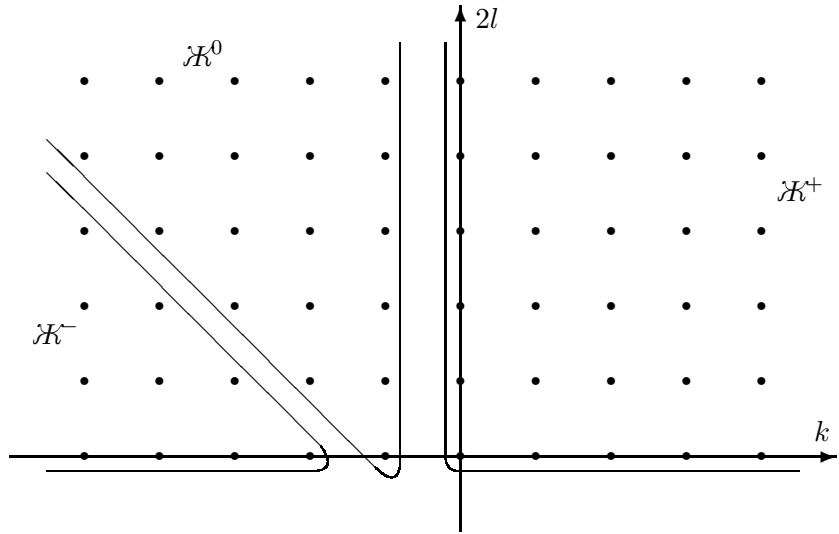

A tensor product $(\pi_l^0, {\cal H}^+) \otimes (\pi_r^0,{\cal H}^+)$ of
representations of $\mathfrak{gl}(2,\HC)$ decomposes into a direct sum
of irreducible subrepresentations, one of which is $(\rho_1,\Zh^+)$.
This decomposition is stated precisely in equation (\ref{tensor-decomp}).
The irreducible component $(\rho_1,\Zh^+)$ has multiplicity one
and is generated by $1 \otimes 1 \in {\cal H}^+ \otimes {\cal H}^+$.
Thus we have a $\mathfrak{gl}(2,\HC)$-equivariant map
\begin{equation*}
I: (\rho_1,\Zh^+) \hookrightarrow
(\pi_l^0, {\cal H}^+) \otimes (\pi_r^0,{\cal H}^+),
\end{equation*}
which is unique up to multiplication by a scalar.
This scalar can be pinned down by a requirement $I(1) = 1 \otimes 1$.

We consider a map
\begin{equation}  
\Zh \ni f(Z) \quad \mapsto \quad (I_R f)(W_1,W_2) =
\frac i{2\pi^3} \int_{Z \in U(2)_R} \frac{f(Z) \,dV}{N(Z-W_1) \cdot N(Z-W_2)}
\quad \in \B{{\cal H} \otimes {\cal H}},
\end{equation}
where $\B{{\cal H} \otimes {\cal H}}$ denotes the Hilbert space obtained by
completing ${\cal H} \otimes {\cal H}$ with respect to the unitary structure
coming from the tensor product of unitary representations
$(\pi^0_l, {\cal H})$ and $(\pi^0_r, {\cal H})$.
If $W_1, W_2 \in \BB D^+_R$ or $W_1, W_2 \in \BB D^-_R$, the integrand has
no singularities and the result is a holomorphic function in two variables
$W_1, W_2$ which is harmonic in each variable separately.

\begin{thm}[[Theorem 12 and Corollary 14 in \cite{FL3}]  
When $W_1, W_2 \in \BB D^+_R$, the map $I_R$ annihilates $\Zh^- \oplus \Zh^0$,
and its restriction to $\Zh^+$ coincides with the map $I$.

When $W_1, W_2 \in \BB D^-_R$, the map $I_R$ annihilates $\Zh^0 \oplus \Zh^+$,
and its restriction to $\Zh^-$ produces an equivariant embedding
$(\rho_1,\Zh^-) \hookrightarrow
(\pi_l^0, {\cal H}^-) \otimes (\pi_r^0,{\cal H}^-)$.

\end{thm}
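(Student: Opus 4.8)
The plan is to evaluate $I_R$ directly on the basis $\{t^l_{n\,\underline{m}}(Z)\cdot N(Z)^k\}$ of $\Zh$ and to read off, from the bookkeeping of the powers of $N(Z)$, which of the three irreducible summands $\Zh^-,\Zh^0,\Zh^+$ get annihilated in each of the two cases. The tools are the matrix coefficient expansion (\ref{1/N-expansion}), the orthogonality relations (\ref{orthogonality}), the Clebsch--Gordan decomposition of $\HC^\times$-modules, and the $\mathfrak{gl}(2,\HC)$-equivariance of $I_R$. First I would record the two expansions of the kernel available on the cycle $U(2)_R$: since $Z^*Z=ZZ^*=R^2$ there, for $Z\in U(2)_R$ the condition $WZ^{-1}\in\BB D^+$ becomes $W\in\BB D^+_R$ and $ZW^{-1}\in\BB D^+$ becomes $W\in\BB D^-_R$; hence, using $N(Z-W)=N(W-Z)$, (\ref{1/N-expansion}) gives
$$
\frac1{N(Z-W)}=N(Z)^{-1}\sum_{l,m,n}t^l_{m\,\underline{n}}(W)\,t^l_{n\,\underline{m}}(Z^{-1}),\qquad Z\in U(2)_R,\ W\in\BB D^+_R,
$$
$$
\frac1{N(Z-W)}=N(W)^{-1}\sum_{l,m,n}t^l_{m\,\underline{n}}(Z)\,t^l_{n\,\underline{m}}(W^{-1}),\qquad Z\in U(2)_R,\ W\in\BB D^-_R,
$$
both series converging absolutely and uniformly for $Z\in U(2)_R$ and $W$ in a compact subset of the relevant domain. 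Thus, when $W_1,W_2$ lie in the same domain, I may substitute both kernels into the integral, multiply out, and integrate the resulting double series against $dV$ term by term.

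Consider $W_1,W_2\in\BB D^+_R$. The $Z$-dependence of a typical term of the double series is $N(Z)^{-2}\,t^{l_1}_{n_1\,\underline{m_1}}(Z^{-1})\,t^{l_2}_{n_2\,\underline{m_2}}(Z^{-1})$. By the classical decomposition $\tau_{l_1}\otimes\tau_{l_2}\simeq\bigoplus_{l'=|l_1-l_2|}^{l_1+l_2}\tau_{l'}\otimes(\det)^{\,l_1+l_2-l'}$ of $\HC^\times$-modules (here $\det$ is the character $Z\mapsto N(Z)$), this product is a finite linear combination of functions $t^{l'}_{n'\,\underline{m'}}(Z^{-1})\cdot N(Z)^{-(l_1+l_2-l')}$, $|l_1-l_2|\le l'\le l_1+l_2$. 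Pairing against a basis vector $f(Z)=t^\lambda_{\nu\,\underline{\mu}}(Z)\cdot N(Z)^\kappa$ via (\ref{pairing}) and invoking (\ref{orthogonality}), the only surviving contributions come from terms with $l'=\lambda$, with the remaining indices matched as dictated by (\ref{orthogonality}), and with the power balance $l_1+l_2=\lambda+\kappa$; combined with $\lambda\le l_1+l_2$ this forces $\kappa\ge0$. Therefore $I_R$ kills every basis vector with $\kappa<0$, i.e. all of $\Zh^-\oplus\Zh^0$, while on $\Zh^+$ the surviving sum is finite, so $I_R(\Zh^+)\subset{\cal H}^+\otimes{\cal H}^+$. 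Granting equivariance of $I_R$, the restriction $I_R|_{\Zh^+}$ is an equivariant map into ${\cal H}^+\otimes{\cal H}^+$; since $(\rho_1,\Zh^+)$ is irreducible and has multiplicity one in $(\pi^0_l,{\cal H}^+)\otimes(\pi^0_r,{\cal H}^+)$, Schur's Lemma gives $I_R|_{\Zh^+}=c\,I$ for a scalar $c$. Finally, taking $f=1$ and $W_1=W_2=0\in\BB D^+_R$ and using $\int_{U(2)_R}N(Z)^{-2}\,dV=-2\pi^3 i$ gives $(I_R1)(0,0)=1=(1\otimes1)(0,0)$, so $c=1$ and $I_R|_{\Zh^+}=I$.

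The case $W_1,W_2\in\BB D^-_R$ runs in parallel, now using the second expansion. Here the $Z$-dependence of a typical term is $N(W_1)^{-1}N(W_2)^{-1}\,t^{l_1}_{m_1\,\underline{n_1}}(Z)\,t^{l_2}_{m_2\,\underline{n_2}}(Z)$; the same Clebsch--Gordan rewriting makes this a finite combination of $t^{l'}_{n'\,\underline{m'}}(Z)\cdot N(Z)^{\,l_1+l_2-l'}$, and (\ref{orthogonality}) now forces $l'=\lambda$ together with $l_1+l_2=-\lambda-\kappa-2$. Combined with $\lambda\le l_1+l_2$ this reads $\kappa\le-2\lambda-2$, which is exactly the defining inequality of $\Zh^-$; hence $I_R$ annihilates $\Zh^0\oplus\Zh^+$, and on $\Zh^-$ the resulting finite formula exhibits the image inside ${\cal H}^-\otimes{\cal H}^-$ (each tensor factor being of the form $t^l_{n'\,\underline{m'}}(W^{-1})\cdot N(W)^{-1}\in{\cal H}^-$). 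By equivariance $I_R|_{\Zh^-}$ is an equivariant map $\Zh^-\to{\cal H}^-\otimes{\cal H}^-$; it is already nonzero on a single basis vector (for instance $I_R$ sends $N(Z)^{-2}\in\Zh^-$ to $N(W_1)^{-1}N(W_2)^{-1}$, by the same normalization), and $\Zh^-$ is irreducible, so the map is injective, which is the asserted embedding.

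The step I expect to be the main obstacle is the $\mathfrak{gl}(2,\HC)$-equivariance of $I_R$ invoked twice above. I would establish it first for the real form $\mathfrak{u}(2,2)_R$, whose infinitesimal conformal action on $\HC$ consists of holomorphic vector fields tangent to the closed cycle $U(2)_R$: differentiating under the integral sign, one then integrates by parts on $U(2)_R$ with no boundary term, using the covariance of $dV\big/\bigl(N(Z-W_1)N(Z-W_2)\bigr)$ under (\ref{conformal-action}) (via $N(hZ-hW)=N(Z-W)\big/\bigl(N(cZ+d)N(cW+d)\bigr)$ together with the conformal Jacobian). Equivariance for the full $\mathfrak{gl}(2,\HC)$ then follows because the intertwining identity is $\BB C$-linear in the Lie algebra element and $\mathfrak{gl}(2,\HC)=\mathfrak{u}(2,2)_R\otimes_{\BB R}\BB C$. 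A lesser, purely combinatorial, point is to verify that (\ref{orthogonality}) also pins down the inner indices $m,n$ and not merely the label $l$ and the power of $N(Z)$; this is routine and is what makes $I_R|_{\Zh^+}$ agree with $I$ on the nose rather than merely up to a scalar on each isotypic piece.
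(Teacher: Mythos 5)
This theorem is imported by the paper from \cite{FL3} (Theorem 12 and Corollary 14) with no proof given here, so there is no internal argument to compare against; judged on its own, your proof is correct and is exactly in the spirit of the computations the paper does carry out elsewhere (the proofs of Lemma \ref{gen-val-lemma} and Lemma \ref{annihilator-lem1} use the same strategy of expanding each $N(Z-T)^{-1}$ via (\ref{1/N-expansion}) and killing terms with (\ref{orthogonality})). Your two observations that do the real work are both right: on $U(2)_R$ one has $WZ^{-1}\in\BB D^+$ iff $W\in\BB D^+_R$ and $ZW^{-1}\in\BB D^+$ iff $W\in\BB D^-_R$, which selects which of the two expansions of the kernel is available; and the Clebsch--Gordan bookkeeping $l'\le l_1+l_2$ combined with the $\delta_{kk'}$ in (\ref{orthogonality}) translates precisely into $\kappa\ge 0$ in the first case and $\kappa\le -2\lambda-2$ in the second, which are the defining inequalities of $\Zh^+$ and $\Zh^-$. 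The Schur normalization via $(I_R 1)(0,0)=\frac{i}{2\pi^3}\int_{U(2)_R}N(Z)^{-2}\,dV=1$ and the nonvanishing check on $N(Z)^{-2}$ are also correct. Your treatment of equivariance (intertwining for the real form $\mathfrak{u}(2,2)_R$, whose vector fields are tangent to the closed cycle $U(2)_R$, followed by complexification) is the standard route and is consistent with how the paper handles the analogous issue in Lemma \ref{conformal}, where the cycle is moved using closedness of the integrand. The only place where I would ask for more care is the exact form of the covariance identity for $N(\tilde Z-\tilde W)$ (it mixes $N(cW+d)$ and $N(a'-Wc')$ factors, which is what distinguishes the $\pi^0_l$ slot from the $\pi^0_r$ slot in the target), but this affects only bookkeeping, not the validity of the argument.
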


Next we have a lemma that will be used for evaluating integral
operators $L^{(n)}$ on the generators of
$(\pi_l^0,{\cal H}^+) \otimes (\pi_r^0, {\cal H}^+)$.

\begin{lem}[Lemma 18 in \cite{L}]  \label{z^p}
Let $k=1,2,3,\dots$, and let $z_{ij}$ be $z_{11}$, $z_{12}$, $z_{21}$ or $z_{22}$. Then
$$
\bigl( I (z_{ij})^k \bigr)(W,W') =
\frac1{k+1} \sum_{p=0}^k (w_{ij})^p \cdot (w'_{ij})^{k-p}.
$$
\end{lem}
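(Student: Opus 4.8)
The plan is to compute $\bigl(I(z_{ij})^k\bigr)(W,W')$ by realizing $I$ as an explicit integral operator and exploiting the matrix coefficient machinery assembled in Section \ref{preliminaries}. First I would fix $R=1$ (by independence of $R$, or simply by the fact that $I=I_R$ restricted to $\Zh^+$ when $W,W'\in\BB D^+_R$) and write
$$
\bigl(I(z_{ij})^k\bigr)(W,W') = \frac i{2\pi^3}\int_{Z\in U(2)_1}
\frac{(z_{ij})^k}{N(Z-W)\cdot N(Z-W')}\,dV,
$$
valid for $W,W'$ in a neighborhood of the origin (both in $\BB D^+_1$). The key step is to insert the matrix coefficient expansion (\ref{1/N-expansion}) for \emph{each} of the two factors $1/N(Z-W)$ and $1/N(Z-W')$, so that the integrand becomes a double sum of products $t^{l}_{m\underline n}(Z)\,t^{l}_{n\underline m}(W^{-1})\,t^{l'}_{m'\underline{n'}}(Z)\,t^{l'}_{n'\underline{m'}}(W'^{-1})\cdot N(W)^{-1}N(W')^{-1}$ times $(z_{ij})^k$. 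One then interchanges sum and integral and is reduced to computing integrals of the form $\frac i{2\pi^3}\int_{U(2)_1}(z_{ij})^k\,t^{l}_{m\underline n}(Z)\,t^{l'}_{m'\underline{n'}}(Z)\,dV$, i.e. pairings of the type (\ref{orthogonality}).

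The cleanest way to organize this is to recognize $(z_{ij})^k$ itself as one of the special matrix coefficients from (\ref{t-special}): for instance $(z_{11})^k = t^{k/2}_{-k/2\,\underline{-k/2}}(Z)$, and similarly for the other three entries. So the integral $\frac i{2\pi^3}\int_{U(2)_1}(z_{ij})^k\, t^{l}_{m\underline n}(Z)\,t^{l'}_{m'\underline{n'}}(Z)\,dV$ is a pairing (\ref{pairing}) of $t^{k/2}_{\cdot\,\underline{\cdot}}(Z)\cdot N(Z)^0$ against $t^{l'}_{m'\underline{n'}}(Z)\,t^{l}_{m\underline n}(Z)$; expanding the latter product in the basis (\ref{Zh-basis}) and using the orthogonality relations (\ref{orthogonality}) collapses the double sum down to the diagonal terms. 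The combinatorics here is exactly the Clebsch--Gordan decomposition of $V_{l}\otimes V_{l'}$ together with the fact that multiplication by $N(Z)$ shifts the $k$-index; I expect only the terms with $l=l'=p/2$ and specific $(m,n)$, coupling to give the leading symmetric piece $V_{k/2}$, to survive. Summing these surviving contributions against the corresponding $t^{p/2}_{\cdot\,\underline\cdot}(W^{-1})N(W)^{-1}$ and $t^{(k-p)/2}_{\cdot\,\underline\cdot}(W'^{-1})N(W')^{-1}$ and again invoking (\ref{t-special}) (now for $W^{-1}$, $W'^{-1}$, recalling $t^l_{-l\,\underline{-l}}(W^{-1})N(W)^{-1}=(w_{11})\cdots$ after accounting for the inverse) should reassemble the answer as $\frac1{k+1}\sum_{p=0}^k (w_{ij})^p(w'_{ij})^{k-p}$.

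An alternative, and possibly shorter, route avoids the expansion entirely: since $I$ is the unique (up to scalar) $\mathfrak{gl}(2,\HC)$-equivariant embedding of $(\rho_1,\Zh^+)$ normalized by $I(1)=1\otimes1$, and since $(z_{ij})^k$ generates a finite-dimensional $K=U(2)\times U(2)$-subspace, one can determine $I(z_{ij})^k$ up to the action of lowering/raising operators from $I(1)=1\otimes1$ by applying suitable elements of $\mathfrak{gl}(2,\HC)$ whose $\pi^0_l\otimes\pi^0_r$-action on $1\otimes1$ is known explicitly (from Subsection 3.2 of \cite{FL2}); the Leibniz rule on the tensor product then produces precisely a sum $\sum_p (w_{ij})^p(w'_{ij})^{k-p}$, and the normalization constant $\tfrac1{k+1}$ is fixed by a single evaluation (e.g. comparing the coefficient of $(w_{ij})^k$, or using the pairing (\ref{H-orthogonality})). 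The main obstacle in either approach is bookkeeping: in the first, tracking which matrix-coefficient indices survive the orthogonality relations and correctly handling the $N(W)^{-1}$, $N(W'^{-1})$ prefactors when passing between $t^l_{\cdot\,\underline\cdot}(W^{-1})$ and monomials in $w_{ij}$; in the second, verifying that repeated application of the Lie algebra action to $1\otimes1$ indeed produces the uniform coefficients of the symmetric sum rather than something weighted. I would pursue the first (expansion) approach, as it uses only results already cited in the excerpt and needs no new computation of the $\pi^0_l$, $\pi^0_r$ Lie algebra actions.
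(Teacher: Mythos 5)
Your first (expansion) route is essentially the proof behind this lemma: the paper imports it from \cite{L}, and Corollary \ref{z^p-cor} --- explicitly described as ``a consequence of the proof of this lemma'' --- is exactly the orthogonality computation your plan reduces to, namely the pairing of $N(Z)^{-2}\,t^l_{m\,\underline{n}}(Z^{-1})\,t^{l'}_{m'\,\underline{n'}}(Z^{-1})$ against $(z_{ij})^k$, whose only surviving terms have $l+l'=k/2$ and extreme indices, each weighted $\frac1{k+1}$. So the strategy is the right one and matches the source.

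One concrete misstep needs fixing, however. You insert the expansion (\ref{1/N-expansion}) in the form $N(W)^{-1}\sum t^l_{m\,\underline{n}}(Z)\,t^l_{n\,\underline{m}}(W^{-1})$ for both factors. That series converges only on $\{ZW^{-1}\in\BB D^+\}$, which fails in the regime of the integral ($Z\in U(2)_R$, $W,W'\in\BB D^+_R$); and even formally it places all the $Z$-dependence in $\Zh^+$, so every term pairs to zero against $(z_{ij})^k$ by (\ref{orthogonality}), which would give the absurd answer $I((z_{ij})^k)=0$. Since $N(Z-W)=N(W-Z)$, the correct variant is
$$
\frac1{N(Z-W)} \;=\; N(Z)^{-1}\sum_{l,m,n} t^l_{m\,\underline{n}}(W)\cdot t^l_{n\,\underline{m}}(Z^{-1}),
$$
valid for $WZ^{-1}\in\BB D^+$. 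With this choice the $Z$-integral is literally the pairing of Corollary \ref{z^p-cor}, the $W$- and $W'$-dependence sits in $t^l_{-l\,\underline{-l}}(W)=(w_{11})^{2l}$ and $t^{l'}_{-l'\,\underline{-l'}}(W')=(w'_{11})^{2l'}$ directly via (\ref{t-special}) (no inverses and no $N(W)^{-1}$ prefactors to untangle, so the bookkeeping obstacle you flag evaporates), and summing over $l+l'=k/2$ yields $\frac1{k+1}\sum_{p=0}^k(w_{11})^p(w'_{11})^{k-p}$; the other three entries follow by the same computation with the corresponding extreme indices. Your alternative equivariance-plus-normalization route is plausible but unnecessary once the expansion is set up in the right regime.
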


Finally, we have the following consequence of the proof of this lemma.

\begin{cor} [Corollary 19 in \cite{L}]  \label{z^p-cor}
Let $k \ge 0$. We have the following orthogonality relations:
$$
\bigl\langle N(Z)^{-2-k} \cdot t^l_{m \, \underline{n}}(Z^{-1}) \cdot
t^{l'}_{m' \, \underline{n'}}(Z^{-1}), t^{p/2}_{-p/2\,\underline{-p/2}}(Z) \bigr\rangle =
\begin{cases}
\frac1{p+1} & \text{if $k=0$, $l+l'=p/2$,} \\
& \text{$m=n=-l$ and $m'=n'=-l'$}; \\
0 & \text{otherwise}.
\end{cases}
$$
\end{cor}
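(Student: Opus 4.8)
The plan is to evaluate this pairing directly against the orthogonality relations (\ref{orthogonality}). By (\ref{t-special}) the second argument equals $(z_{11})^p = t^{p/2}_{-p/2\,\underline{-p/2}}(Z)$, and since the pairing (\ref{pairing}) is symmetric it suffices to compute $\bigl\langle (z_{11})^p,\ \Psi\bigr\rangle$, where $\Psi := N(Z)^{-2-k}\,t^l_{m\,\underline{n}}(Z^{-1})\,t^{l'}_{m'\,\underline{n'}}(Z^{-1}) \in \Zh$. I would write $\Psi$ as a finite linear combination of elements of the form $t^L_{M\,\underline{N}}(Z^{-1})\,N(Z)^{-K-2}$, which --- paired against $t^{p/2}_{-p/2\,\underline{-p/2}}(Z)\,N(Z)^{0}$ --- are governed by (\ref{orthogonality}), and then read off the single relevant coefficient.

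The algebraic heart of the matter is the Clebsch-Gordan law for $\HC^{\times}\simeq GL(2,\BB C)$, namely $\tau_l\otimes\tau_{l'}\simeq\bigoplus_{j=|l-l'|}^{l+l'}\tau_j\otimes N^{\,l+l'-j}$, where $N$ denotes the one-dimensional representation $Z\mapsto N(Z)$; the point is the $\det$-twists $N^{\,l+l'-j}$. Passing to matrix coefficients, which are polynomials on $\HC$, this gives
$$
t^l_{m\,\underline{n}}(Z)\,t^{l'}_{m'\,\underline{n'}}(Z) = \sum_{j} a_j\, t^j_{(m+m')\,\underline{(n+n')}}(Z)\cdot N(Z)^{\,l+l'-j}
$$
for constants $a_j$ with $a_j=0$ unless $|l-l'|\le j\le l+l'$. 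Substituting $Z\mapsto Z^{-1}$ (so that $N(Z^{-1})=N(Z)^{-1}$) and multiplying by $N(Z)^{-2-k}$ gives
$$
\Psi = \sum_{j} a_j\, t^j_{(m+m')\,\underline{(n+n')}}(Z^{-1})\cdot N(Z)^{\,-(l+l'+k-j)-2},
$$
which is exactly of the required shape: a finite combination of elements $t^j_{(m+m')\,\underline{(n+n')}}(Z^{-1})\,N(Z)^{-K-2}$ with $K=l+l'+k-j$.

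Now I would apply (\ref{orthogonality}) term by term against $(z_{11})^p=t^{p/2}_{-p/2\,\underline{-p/2}}(Z)\,N(Z)^{0}$. That relation is nonzero only when the powers of $N$ match --- forcing $K=0$, i.e.\ $j=l+l'+k$ --- and the remaining indices coincide: $j=p/2$, $m+m'=-p/2$, $n+n'=-p/2$. Since $k\ge 0$ and $j\le l+l'$ in the sum, $j=l+l'+k$ forces $k=0$ and $j=l+l'=p/2$; then $m+m'=-(l+l')$ together with $m\ge -l$, $m'\ge -l'$ forces $m=-l$, $m'=-l'$, and likewise $n=-l$, $n'=-l'$. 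In that single surviving case the product collapses, by (\ref{t-special}), to $t^l_{-l\,\underline{-l}}(Z^{-1})\,t^{l'}_{-l'\,\underline{-l'}}(Z^{-1}) = \bigl((Z^{-1})_{11}\bigr)^{2(l+l')} = t^{p/2}_{-p/2\,\underline{-p/2}}(Z^{-1})$, so $a_{l+l'}=1$, and (\ref{orthogonality}) yields the value $\tfrac1{2(p/2)+1}=\tfrac1{p+1}$; in every other case the pairing vanishes. This is precisely the asserted formula.

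The step I expect to be the main obstacle --- and what keeps the statement from being a one-liner --- is the bookkeeping of the $\det$-twists in the $GL(2,\BB C)$ Clebsch-Gordan law: it is the coupling of the extra $N(Z)$-power with the Kronecker $\delta$ for $N$-degrees in (\ref{orthogonality}) that simultaneously pins down $k=0$ and $l+l'=p/2$, and dropping the twist would spuriously allow contributions with $l+l'>p/2$. (Those can alternatively be excluded via homogeneity, using that the pairing (\ref{pairing}) is independent of $R>0$.) For $k=0$ the identity is also contained in the proof of Lemma \ref{z^p} itself: one there expands $(I(z_{11})^p)(W_1,W_2)=\tfrac{i}{2\pi^3}\int_{U(2)_R}(z_{11})^p\,dV\big/\bigl(N(Z-W_1)N(Z-W_2)\bigr)$ by means of (\ref{1/N-expansion}) and compares coefficients with the closed form supplied by Lemma \ref{z^p}.
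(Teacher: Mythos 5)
Your argument is correct, and it is a genuinely different route from the one the paper takes. The paper does not prove Corollary \ref{z^p-cor} directly: it is imported as Corollary 19 of \cite{L} and presented there as a by-product of the \emph{proof} of Lemma \ref{z^p}, i.e.\ one expands $N(Z-W_1)^{-1}N(Z-W_2)^{-1}$ via (\ref{1/N-expansion}), pairs against $(z_{11})^p$, and reads the orthogonality relations off by comparing with the closed form $\frac1{p+1}\sum_p (w_{11})^{p}(w'_{11})^{k-p}$ --- exactly the alternative you sketch in your last paragraph for $k=0$. You instead prove the statement self-contained and in one stroke for all $k\ge 0$, by expanding the product $t^l_{m\,\underline{n}}\cdot t^{l'}_{m'\,\underline{n'}}$ through the $GL(2,\BB C)$ Clebsch--Gordan law with its $\det$-twists and then invoking (\ref{orthogonality}) term by term; the key points --- that only the weight $(m+m',n+n')$ occurs in the expansion, that $j=l+l'+k\le l+l'$ forces $k=0$ and $j=l+l'$, that extremality of the weight $-(l+l')$ then pins $m=n=-l$, $m'=n'=-l'$, and that in the surviving case $a_{l+l'}=1$ by (\ref{t-special}) --- are all justified correctly, and the normalization $\frac1{2j+1}=\frac1{p+1}$ comes out right. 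What your approach buys is independence from Lemma \ref{z^p} (indeed it could be used to \emph{reprove} that lemma) and a transparent reason for the vanishing when $k>0$; what the paper's route buys is economy, since the computation behind Lemma \ref{z^p} has to be done anyway and the corollary then costs nothing extra.
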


\subsection{Representations $(\varpi_2^l,\Zh)$, $(\varpi_2^r,\Zh)$ and
Their Subrepresentations}

Recall that $\widetilde{\Zh}$ denotes the space of $\BB C$-valued functions
on $\HC$ (possibly with singularities) which are holomorphic with respect to
the complex variables $z_{11}$, $z_{12}$, $z_{21}$, $z_{22}$.
We define two very similar actions of $GL(2,\HC)$ on $\widetilde{\Zh}$:
\begin{align*}
\varpi_2^l(h): \: f(Z) \quad &\mapsto \quad \bigl( \varpi_2^l(h)f \bigr)(Z) =
\frac {f \bigl( (aZ+b)(cZ+d)^{-1} \bigr)}{N(cZ+d)^2 \cdot N(a'-Zc')},  \\
\varpi_2^r(h): \: f(Z) \quad &\mapsto \quad \bigl( \varpi_2^r(h)f \bigr)(Z) =
\frac {f \bigl( (aZ+b)(cZ+d)^{-1} \bigr)}{N(cZ+d) \cdot N(a'-Zc')^2},
\end{align*}
where
$h = \bigl(\begin{smallmatrix} a' & b' \\ c' & d' \end{smallmatrix}\bigr)
\in GL(2,\HC)$ and 
$h^{-1} = \bigl(\begin{smallmatrix} a & b \\ c & d \end{smallmatrix}\bigr)$.
(These actions coincide on $SL(2,\HC)$.)
Note that $\varpi_2^l$ is the action $\varpi_2$ in the notations of \cite{L}.
Differentiating, we obtain actions of $\mathfrak{gl}(2,\HC)$ which preserve
the spaces $\Zh$ and $\Zh^+$ defined by (\ref{zh+})-(\ref{zh}).

\begin{thm}[Theorem 8 in \cite{L}]
The spaces
\begin{align*}
\Zh^+ &= \BB C \text{-span of }
\bigl\{ t^l_{n\,\underline{m}}(Z) \cdot N(Z)^k;\: k \ge 0 \bigr\}, \\
\Zh_2^- &= \BB C \text{-span of }
\bigl\{ t^l_{n\,\underline{m}}(Z) \cdot N(Z)^k;\: k \le -(2l+3) \bigr\}, \\
I_2^- &= \BB C \text{-span of }
\bigl\{ t^l_{n\,\underline{m}}(Z) \cdot N(Z)^k;\: k \le -2 \bigr\}, \\
I_2^+ &= \BB C \text{-span of }
\bigl\{ t^l_{n\,\underline{m}}(Z) \cdot N(Z)^k;\: k \ge -(2l+1) \bigr\}, \\
J_2 &= \BB C \text{-span of }
\bigl\{ t^l_{n\,\underline{m}}(Z) \cdot N(Z)^k;\: -(2l+1) \le k \le -2 \bigr\}
\end{align*}
and their sums are the only proper subspaces of $\Zh$ that are invariant
under either $\varpi_2^l$ or $\varpi_2^r$ actions of $\mathfrak{gl}(2,\HC)$
(see Figure \ref{decomposition-fig2}).

The irreducible components of $(\varpi_2^l, \Zh)$ and $(\varpi_2^r, \Zh)$
are the subrepresentations
$$
(\varpi_2^*, \Zh^+), \qquad (\varpi_2^*, \Zh_2^-), \qquad (\varpi_2^*, J_2)
$$
and the quotients
$$
\bigl( \varpi_2^*, \Zh/(I_2^- \oplus \Zh^+) \bigr)
= \bigl( \varpi_2^*, I_2^+/(\Zh^+ \oplus J_2) \bigr), \quad
\bigl( \varpi_2^*, \Zh/(\Zh_2^- \oplus I_2^+) \bigr)
= \bigl( \varpi_2^*, I_2^-/(\Zh_2^- \oplus J_2) \bigr),
$$
where $*$ stands for $l$ or $r$.
\end{thm}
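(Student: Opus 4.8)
The plan is to restrict both representations to a maximal compact subalgebra, observe that every $K$-type occurs with multiplicity one, and thereby turn the classification of invariant subspaces into a combinatorial problem about which ``arrows'' coming from the noncompact part of $\mathfrak{gl}(2,\HC)$ vanish.

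Since $\varpi_2^l$ and $\varpi_2^r$ agree on $\mathfrak{sl}(2,\HC)\simeq\mathfrak{sl}(4,\BB C)$ and the complementary central generator of $\mathfrak{gl}(2,\HC)$ acts by a scalar operator in each case, the invariant subspaces of $(\varpi_2^l,\Zh)$, of $(\varpi_2^r,\Zh)$, and of the common restriction to $\mathfrak{sl}(2,\HC)$ all coincide; I would treat the three together. Restricting further to $\mathfrak{u}(2)\oplus\mathfrak{u}(2)$ and using the basis (\ref{Zh-basis}), for each fixed $(l,k)$ the span of $\{\,t^l_{n\,\underline{m}}(Z)\cdot N(Z)^k : -l\le m,n\le l\,\}$ is an irreducible $K$-type, distinct pairs $(l,k)$ give distinct $K$-types, and each occurs with multiplicity one. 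Hence every invariant subspace is a sum of $K$-types, i.e.\ corresponds to a subset $S$ of the lattice $\Lambda=\{(l,k):2l\in\BB Z_{\ge0},\,k\in\BB Z\}$, and $S$ is admissible precisely when it is closed under every nonzero transition induced by the noncompact part $\mathfrak{p}\subset\mathfrak{gl}(2,\HC)$. It is convenient to use coordinates $u=k$, $v=2l+k$, so that $\Lambda=\{(u,v)\in\BB Z^2:v\ge u\}$ and the regions in the statement become $\Zh^+=\{u\ge0\}$, $I_2^+=\{v\ge-1\}$, $I_2^-=\{u\le-2\}$, $\Zh_2^-=\{v\le-3\}$, $J_2=I_2^+\cap I_2^-$.

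Next I would compute the transitions. Under $K$ the space $\mathfrak{p}$ splits into two blocks: one acts by the first-order operators $\partial/\partial z_{ij}$, which lower degree by $1$ and are independent of the weight (so literally the same here and for $(\rho_1,\Zh)$); the other acts by the first-order ``special conformal'' operators of shape $\phi\mapsto\langle\nabla\phi,\,ZCZ\rangle+2\,\tr(CZ)\,\phi$, which raise degree by $1$, the coefficient $2$ being the only place where the weight enters (against $1$ for $\rho_1$). Since $\deg\bigl(t^l_{n\,\underline{m}}(Z)N(Z)^k\bigr)=2l+2k$ and each $K$-type is multiplicity-free, Schur's lemma forces each transition from the $(l,k)$ $K$-type to an adjacent one to be a fixed $K$-intertwiner times a scalar; in the coordinates $(u,v)$ there are four arrow types, namely $v\to v\pm1$ (at fixed $k$) and $u\to u\pm1$ (at fixed $2l+k$), with scalar coefficients that are explicit functions of $(l,k)$. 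The crux is to locate their zeros. One may do this directly by evaluating the operators on the corner vectors $t^l_{-l\,\underline{-l}}(Z)N(Z)^k=(z_{11})^{2l}N(Z)^k$ and the three analogues of (\ref{t-special}); alternatively, and more cheaply, one may start from the coefficients for $(\rho_1,\Zh)$ — whose zeros are forced, by the three-fold direct-sum decomposition of $(\rho_1,\Zh)$ recalled above, to lie along $v=-2$, $u=-1$, $v=-1$, $u=0$ respectively — and then add the contribution of multiplication by $\tr(CZ)$, a single Clebsch--Gordan computation which is nonzero along each relevant arrow, concluding that the two degree-lowering coefficients are unchanged while the two degree-raising ones remain linear in the relevant variable with their zero shifted outward by one unit. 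The outcome is: $v\to v+1$ vanishes exactly along $\{v=-3\}$, $u\to u+1$ exactly along $\{u=-2\}$, $v\to v-1$ exactly along $\{v=-1\}$, $u\to u-1$ exactly along $\{u=0\}$, and all four are nonzero elsewhere on $\Lambda$.

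Finally I would read off the theorem. With these zero loci, a direct check shows that the subsets of $\Lambda$ closed under all nonzero arrows are exactly $\varnothing$, $\Lambda$, and the sums of $\Zh^+$, $\Zh_2^-$, $J_2$, $I_2^-$, $I_2^+$: for instance $\Zh^+=\{u\ge0\}$ is closed because the only arrow lowering $u$ vanishes at $u=0$, while any $S$ meeting the line $\{u=-1\}$ must, via the nonzero arrows $u\to u\pm1$ available there, contain all of $\{v\ge-1\}=I_2^+$, and similarly any $S$ meeting the antidiagonal $\{v=-2\}$ must contain all of $I_2^-$. The same bookkeeping yields the irreducible subquotients: $\Zh^+$, $\Zh_2^-$, $J_2$ are irreducible submodules (inside each, every arrow is nonzero, so every $K$-type generates the whole), whereas $I_2^+/(\Zh^+\oplus J_2)$ and $I_2^-/(\Zh_2^-\oplus J_2)$ are irreducible, being the ``line'' $\{u=-1\}$ and the ``antidiagonal'' $\{v=-2\}$ on which the internal arrows $v\to v\pm1$, resp.\ $u\to u\pm1$, are nonzero; the stated identifications of these quotients with $\Zh/(I_2^-\oplus\Zh^+)$ and $\Zh/(\Zh_2^-\oplus I_2^+)$ are then immediate from the second isomorphism theorem. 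The main obstacle is the non-vanishing half of the arrow computation: vanishing of the four coefficients along the four walls is essentially forced once the submodules are exhibited, but proving them nonzero everywhere else — which is precisely what makes the list of invariant subspaces complete and the subquotients irreducible — requires the genuine evaluation of the Clebsch--Gordan data (or the $\rho_1$-plus-correction bookkeeping), with some care in the boundary cases $l=0,\tfrac12$ where certain arrows are absent for lattice reasons.
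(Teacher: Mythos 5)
This statement is quoted verbatim from [L] (Theorem 8 there) and the present paper contains no proof of it, so there is no in-paper argument to compare against; I can only assess your proposal on its own terms. Your strategy is the standard and, I believe, correct one for representations of this type: both $\varpi_2^l$ and $\varpi_2^r$ restrict to the same $\mathfrak{sl}(2,\HC)$-action up to a central scalar, the restriction to $\mathfrak{u}(2)\oplus\mathfrak{u}(2)$ is multiplicity-free with the $K$-types indexed by $(l,k)$ via the basis (\ref{Zh-basis}), so every invariant subspace is a union of $K$-types closed under the four transitions induced by $\mathfrak{p}$. Your change of coordinates $u=k$, $v=2l+k$ correctly converts the five regions into half-planes of the lattice $\{v\ge u\}$, the four arrows $u\to u\pm1$, $v\to v\pm1$ are the right ones (degree changes by $\pm1$, $l$ by $\pm\tfrac12$), and the combinatorial endgame --- closed subsets are exactly the sums of the five listed spaces, with the quotient identifications following from $I_2^+\cup I_2^-=\Zh$ and $I_2^+\cap(I_2^-\oplus\Zh^+)=J_2\oplus\Zh^+$ --- checks out once the zero loci of the arrows are as you state.

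The genuine gap is the one you yourself flag: the entire content of the theorem sits in the claim that each transition coefficient vanishes \emph{exactly} on its wall ($u=0$, $u=-2$, $v=-1$, $v=-3$) and nowhere else, and this is asserted rather than established. The lowering arrows are unproblematic (they are the same operators as for $\rho_1$, whose zero loci you may legitimately import from the quoted decomposition of $(\rho_1,\Zh)$). But for the raising arrows your ``cheaper'' route --- $\rho_1$-coefficient plus a $\tr(CZ)$ correction, ``zero shifted outward by one unit'' --- presupposes the very constants it is meant to avoid computing: knowing only that the correction is a nonzero constant tells you the coefficient is an affine function of $k$ with a single zero, but not that the zero lands at $u=-2$ rather than, say, $u=-\tfrac32$ (off the lattice, which would make $I_2^-$, $I_2^+$, $\Zh_2^-$, $J_2$ fail to be invariant) or at some other integer. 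Pinning the zero requires either the explicit Clebsch--Gordan evaluation on corner vectors such as $(z_{11})^{2l}N(Z)^k$, or an independent direct verification that (say) $I_2^-$ and $I_2^+$ are invariant, after which linearity gives non-vanishing elsewhere. Until one of these is carried out --- including the boundary cases $l=0,\tfrac12$ you mention --- what you have is a correct and well-organized reduction of the theorem to a finite computation, not a complete proof.
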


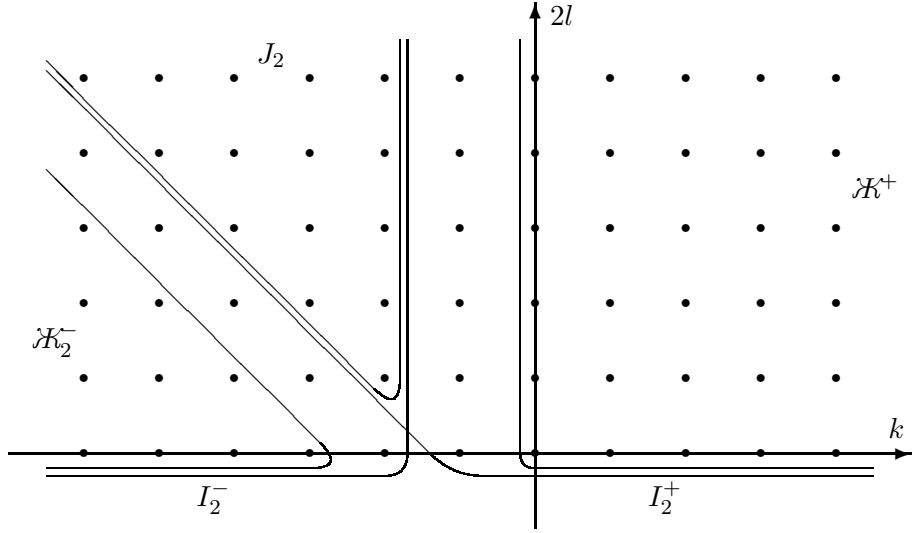
\begin{figure}
\begin{center}
\setlength{\unitlength}{1mm}
\begin{picture}(120,70)
\multiput(10,10)(10,0){11}{\circle*{1}}
\multiput(10,20)(10,0){11}{\circle*{1}}
\multiput(10,30)(10,0){11}{\circle*{1}}
\multiput(10,40)(10,0){11}{\circle*{1}}
\multiput(10,50)(10,0){11}{\circle*{1}}
\multiput(10,60)(10,0){11}{\circle*{1}}

\thicklines
\put(70,0){\vector(0,1){70}}
\put(0,10){\vector(1,0){120}}

\thinlines
\put(68,10){\line(0,1){55}}
\put(70,8){\line(1,0){45}}
\qbezier(68,10)(68,8)(70,8)

\put(52,20){\line(0,1){45}}
\put(48.6,18.6){\line(-1,1){43.6}}
\qbezier(52,20)(52,15.2)(48.6,18.6)

\put(5,8){\line(1,0){35}}
\put(41.4,11.4){\line(-1,1){36.4}}
\qbezier(40,8)(44.8,8)(41.4,11.4)

\put(63,7){\line(1,0){52}}
\put(56,10){\line(-1,1){51}}
\qbezier(63,7)(59,7)(56,10)

\put(5,7){\line(1,0){45}}
\put(53,10){\line(0,1){55}}
\qbezier(50,7)(53,7)(53,10)

\put(72,67){$2l$}
\put(117,12){$k$}
\put(3,24){$\Zh_2^-$}
\put(33,62){$J_2$}
\put(112,44){$\Zh^+$}
\put(25,3){$I_2^-$}
\put(85,3){$I_2^+$}
\end{picture}
\end{center}
\caption{Decomposition of $(\varpi_2^l,\Zh)$ and $(\varpi_2^r,\Zh)$
into irreducible components.}
\label{decomposition-fig2}
\end{figure}

The quotient representations can be identified as follows:

\begin{prop}[Proposition 10 in \cite{L}]  \label{quotient-prop}
As representations of $\mathfrak{gl}(2,\HC)$,
$$
\bigl( \varpi_2^l, \Zh/(I_2^- \oplus \Zh^+) \bigr) \simeq (\pi^0_l, {\cal H}^+),
\qquad 
\bigl( \varpi_2^l, \Zh/(\Zh_2^- \oplus I_2^+) \bigr) \simeq (\pi^0_l, {\cal H}^-),
$$
$$
\bigl( \varpi_2^r, \Zh/(I_2^- \oplus \Zh^+) \bigr) \simeq (\pi^0_r, {\cal H}^+),
\qquad
\bigl( \varpi_2^r, \Zh/(\Zh_2^- \oplus I_2^+) \bigr) \simeq (\pi^0_r, {\cal H}^-),
$$
in all cases the isomorphism map being
$$
{\cal H}^{\pm} \ni \phi(Z) \quad \mapsto \quad
\frac{\degt \phi(Z)}{N(Z)} \in
\begin{matrix} \Zh/(\Zh_2^- \oplus I_2^+) \\ \text{or} \\
\Zh/(I_2^- \oplus \Zh^+). \end{matrix}
$$
The inverse of this isomorphism is given by
\begin{equation}  \label{inverse-iso}
\begin{matrix} \Zh/(\Zh_2^- \oplus I_2^+) \\ \text{or} \\
\Zh/(I_2^- \oplus \Zh^+) \end{matrix} \ni f(Z)
\quad \mapsto \quad \biggl\langle f(Z), \frac1{N(Z-W)} \biggr\rangle_Z
= \frac i{2\pi^3} \int_{Z \in U(2)_R} \frac{f(Z)\,dV}{N(Z-W)} \in {\cal H}.
\end{equation}
\end{prop}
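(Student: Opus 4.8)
The plan is to prove all four isomorphisms simultaneously. It suffices to treat the two cases involving $\varpi_2^l$, the $\varpi_2^r$ cases being obtained by interchanging the two tensor factors (the letters $l$ and $r$); within the $\varpi_2^l$ cases, the target ${\cal H}^+$ or ${\cal H}^-$ is selected according as the point $W$ lies in $\BB D^+_R$ or in $\BB D^-_R$. Throughout, write $\Phi$ for the map $\phi(Z)\mapsto(\degt\phi)(Z)/N(Z)$ and $T$ for the integral transform on the right-hand side of (\ref{inverse-iso}), $T\colon f\mapsto\langle f(Z),\,N(Z-W)^{-1}\rangle_Z$.

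First I would identify the $K$-types and check that $\Phi$ is a linear bijection. By the preceding decomposition theorem (Theorem 8 in \cite{L}) together with the basis (\ref{Zh-basis}) of $\Zh$, the quotient $\Zh/(I_2^-\oplus\Zh^+)$ has as a vector-space basis the classes of the $t^l_{n\,\underline{m}}(Z)\cdot N(Z)^{-1}$ -- the exponent $k=-1$ being the only one contained in neither $I_2^-$ nor $\Zh^+$ -- while $\Zh/(\Zh_2^-\oplus I_2^+)$ has as a basis the classes of the $t^l_{n\,\underline{m}}(Z)\cdot N(Z)^{-(2l+2)}$. Since $t^l_{n\,\underline{m}}$ is homogeneous of degree $2l$ we have $\degt t^l_{n\,\underline{m}}=(2l+1)\,t^l_{n\,\underline{m}}$, and since $t^l_{n\,\underline{m}}(Z)\cdot N(Z)^{-(2l+1)}$ is homogeneous of degree $-(2l+2)$ it is multiplied by $-(2l+1)$ under $\degt$. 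Hence $\Phi$ carries the $K$-type basis $\{t^l_{n\,\underline{m}}\}$ of ${\cal H}^+$ (resp. the basis $\{t^l_{n\,\underline{m}}\cdot N^{-(2l+1)}\}$ of ${\cal H}^-$), up to nonzero scalars, bijectively onto the above basis of the corresponding quotient; in particular $\Phi$ is a linear isomorphism in each case.

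Next I would use $T$ to produce the equivariance and the inverse. The kernel $N(Z-W)^{-1}$ transforms under conformal transformations as a product of an automorphy factor in $Z$ matching the weight of $\varpi_2^l$ and one in $W$ matching the weight of $\pi^0_l$; combined with the $\mathfrak{gl}(2,\HC)$-invariance of the pairing (\ref{pairing}), this makes $T$ intertwine $\varpi_2^l$ in the variable $Z$ with $\pi^0_l$ in the variable $W$, just as for the closely related map $I_R$ of \cite{FL1,FL3}. Expanding $N(Z-W)^{-1}$ by (\ref{1/N-expansion}) when $W\in\BB D^-_R$ (so that $ZW^{-1}\in\BB D^+$ on the contour $Z\in U(2)_R$) and by its $Z\leftrightarrow W$ reflection -- legitimate since $N(Z-W)=N(W-Z)$ -- when $W\in\BB D^+_R$, and pairing term by term by means of the orthogonality relations (\ref{orthogonality}), one finds that for $W\in\BB D^+_R$ the transform $T$ annihilates $I_2^-\oplus\Zh^+$ and maps $\Zh$ into ${\cal H}^+$, while for $W\in\BB D^-_R$ it annihilates $\Zh_2^-\oplus I_2^+$ and maps $\Zh$ into ${\cal H}^-$. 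Thus $T$ descends to equivariant maps $\bar T$ on the two quotients, with values in ${\cal H}^+$ and in ${\cal H}^-$ respectively. The same term-by-term computation shows that the normalization $1/(2l+1)$ in (\ref{orthogonality}) is exactly what cancels the degree scalar $\pm(2l+1)$ produced by $\degt$, so that $\bar T\circ\Phi=\mathrm{id}$ on ${\cal H}^+$ and on ${\cal H}^-$. Since $\bar T\circ\Phi=\mathrm{id}$ forces $\Phi$ to be injective and $\Phi$ is already surjective by the previous paragraph, $\Phi$ is a bijection with two-sided inverse $\bar T$; hence $\Phi$ is $\mathfrak{gl}(2,\HC)$-equivariant and its inverse is the integral transform (\ref{inverse-iso}). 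The two $\varpi_2^r$ cases follow verbatim with $l$ replaced by $r$.

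The step I expect to be the main obstacle is controlling the domain-dependent behaviour of $T$: one must track precisely which matrix-coefficient expansion of $N(Z-W)^{-1}$ converges on $U(2)_R$ according as $W\in\BB D^+_R$ or $W\in\BB D^-_R$, because this is exactly what forces $T$ to annihilate $I_2^-\oplus\Zh^+$ in one case and $\Zh_2^-\oplus I_2^+$ in the other, and hence explains why the \emph{same} formula $(\degt\phi)/N$ lands in ${\cal H}^+$ in the first case and in ${\cal H}^-$ in the second. A secondary, purely combinatorial point is keeping straight the index conventions and signs in the identities expressing $t^l_{n\,\underline{m}}(Z^{-1})$ as a scalar multiple of a matrix coefficient of $Z$ times a power of $N(Z)$, which is what allows one to match the terms of (\ref{1/N-expansion}) against the basis appearing in (\ref{orthogonality}) and thereby to pin down the final scalars.
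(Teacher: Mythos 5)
This proposition is imported verbatim from \cite{L} (Proposition 10 there) and the present paper gives no proof of it, so there is no in-paper argument to compare against; judged on its own, your reconstruction is correct and uses exactly the machinery the paper supplies. The $K$-type bookkeeping is right: the quotients have bases $\{t^l_{n\,\underline{m}}\cdot N^{-1}\}$ and $\{t^l_{n\,\underline{m}}\cdot N^{-(2l+2)}\}$, and $\degt$ acts on the bases of ${\cal H}^{\pm}$ by the nonzero scalars $\pm(2l+1)$, so $\Phi$ is a linear bijection. Your treatment of $T$ is also the right mechanism: on $U(2)_R$ the expansion (\ref{1/N-expansion}) converges for $W\in\BB D^-_R$ and its $Z\leftrightarrow W$ reflection for $W\in\BB D^+_R$, and term-by-term pairing via (\ref{orthogonality}) kills precisely $\Zh_2^-\oplus I_2^+$ in the first case and $I_2^-\oplus\Zh^+$ in the second, with $\bar T\circ\Phi=\mathrm{id}$ on ${\cal H}^+$ being essentially the Poisson formula (Theorem \ref{Poisson}); this descent is the same computation the paper itself performs right after the proposition to obtain the pairing (\ref{H-pairing2}). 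One small imprecision: the automorphy factor of the kernel $N(Z-W)^{-1}$ in the variable $Z$ must match the weight \emph{dual} to $\varpi_2^l$ under the pairing (\ref{pairing2}), namely that of $\pi^0_r$, not of $\varpi_2^l$ itself; with that correction the equivariance of $T$, and hence of $\Phi=\bar T^{-1}$, goes through as you describe. The remaining sign in the ${\cal H}^-$ case, which you flag but do not carry out, does need to be checked against the inversion identity for $t^l_{n\,\underline{m}}(Z^{-1})$, but this is routine.
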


We extend the $\pi_l^0$ and $\pi_r^0$ actions of $GL(2,\HC)$ on
$\widetilde{\cal H}$ to $\widetilde{\Zh}$.
Differentiating these actions, we obtain actions of $\mathfrak{gl}(2,\HC)$,
which preserve $\Zh$, $\Zh^+$ (and, of course, ${\cal H}^-$, ${\cal H}^+$).
These actions are given by the same formulas as in Subsection 3.2 of \cite{FL2}.
Then we have a bilinear pairing between $(\varpi_2^l,\Zh)$ and $(\pi_r^0, \Zh)$
that formally looks the same as (\ref{pairing}):
\begin{equation}  \label{pairing2}
\langle f_1,f_2 \rangle =
\frac i{2\pi^3} \int_{Z \in U(2)_R} f_1(Z) \cdot f_2(Z) \,dV,
\end{equation}
except now the $\mathfrak{gl}(2,\HC)$-actions on the first and second
components are different: $f_1 \in (\varpi_2^l,\Zh)$ and
$f_2 \in (\pi_r^0, \Zh)$.
This bilinear pairing is $\mathfrak{gl}(2,\HC)$-invariant, non-degenerate and
independent of the choice of $R>0$.
In other words, the representations $(\varpi_2^l,\Zh)$ and $(\pi_r^0, \Zh)$
are dual to each other.
Similarly, we have a bilinear pairing between $(\varpi_2^r,\Zh)$ and
$(\pi_l^0, \Zh)$ given by the same formula (\ref{pairing2}).

Now, let us restrict $f_2$ to $(\pi_r^0,{\cal H}) \subset (\pi_r^0, \Zh)$.
Then, by (\ref{orthogonality}), this pairing annihilates all
$f_1 \in (\varpi_2^l, \Zh_2^- \oplus J_2 \oplus \Zh^+)$.
Hence this pairing descends to a pairing between $(\pi_r^0,{\cal H})$ and
$\bigl(\varpi_2^l, \Zh/(\Zh_2^- \oplus J_2 \oplus \Zh^+)\bigr)$.
By Proposition \ref{quotient-prop}, the latter representation is isomorphic
to $(\pi_l^0,{\cal H})$.
Thus we obtain the following expression for a $\mathfrak{gl}(2,\HC)$-invariant
bilinear pairing between $(\pi_l^0,{\cal H})$ and $(\pi_r^0,{\cal H})$:
\begin{equation}  \label{H-pairing2}
(\phi_1,\phi_2) =
\frac i{2\pi^3} \int_{Z \in U(2)_R} (\degt\phi_1)(Z) \cdot \phi_2(Z)
\,\frac{dV}{N(Z)}, \qquad \phi_1,\phi_2 \in {\cal H}.
\end{equation}
(This pairing is independent of the choice of $R>0$.)
Comparing the orthogonality relations (\ref{H-orthogonality}) and
(\ref{orthogonality}), we see that the pairings (\ref{H-pairing})
and (\ref{H-pairing2}) coincide when $\phi_1 \in {\cal H}^+$,
$\phi_2 \in {\cal H}^-$ (but differ for other choices of
$\phi_1$ and $\phi_2$).

\section{Conformal Four-Point Integrals and Magic Identities} \label{fd-section}

In this section we introduce the conformal four-point integrals
$l^{(n)}(Z_1,Z_2;W_1,W_2)$ represented by the $n$-loop box diagrams and explain
the ``magic identities'' due to \cite{DHSS} that assert that integrals
represented by diagrams with the same number of loops are, in fact, equal to
each other. Then we introduce integral operators $L^{(n)}$ on
${\cal H}^+ \otimes {\cal H}^+$ and state the main results of this article.

\subsection{Conformal Four-Point Integrals}

In this subsection we explain how to construct the box diagrams and
the corresponding conformal four-point integrals.

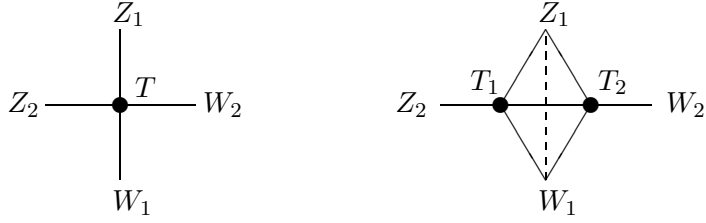
\begin{figure}
\begin{center}
\begin{subfigure}{0.3\textwidth}
\centering
\setlength{\unitlength}{1mm}
\begin{picture}(30,28)
\put(15,14){\circle*{2}}
\put(15,4){\line(0,1){20}}
\put(5,14){\line(1,0){20}}
\put(0,13){$Z_2$}
\put(14,25){$Z_1$}
\put(26,13){$W_2$}
\put(14,0){$W_1$}
\put(17,15){$T$}
\end{picture}
\end{subfigure}
\qquad
\begin{subfigure}{0.3\textwidth}
\centering
\setlength{\unitlength}{1mm}
\begin{picture}(40,28)
\put(14,14){\circle*{2}}
\put(26,14){\circle*{2}}
\put(6,14){\line(1,0){28}}
\put(14,14){\line(3,5){6}}
\put(14,14){\line(3,-5){6}}
\put(26,14){\line(-3,5){6}}
\put(26,14){\line(-3,-5){6}}
\multiput(20,4)(0,2){10}{\line(0,1){1}}
\put(0,13){$Z_2$}
\put(19,25){$Z_1$}
\put(36,13){$W_2$}
\put(19,0){$W_1$}
\put(27,16){$T_2$}
\put(10,16){$T_1$}
\end{picture}
\end{subfigure}
\end{center}
\caption{One-loop (left) and two-loop (right) box (or ladder) diagrams.}
\label{12ladder}
\end{figure}

As in \cite{DHSS}, we use the coordinate space variable notation
(as opposed to the momentum notation).
With this choice of variables, the one- and two-loop box (or ladder) diagrams
are represented as in Figure \ref{12ladder}.
The simplest conformal four-point integral is the one-loop box integral
\begin{equation*}
l^{(1)}(Z_1,Z_2;W_1,W_2) =
\frac i{2\pi^3} \int_{T \in U(2)_r}
\frac{dV}{N(Z_1-T) \cdot N(Z_2-T) \cdot N(W_1-T) \cdot N(W_2-T)}.
\end{equation*}
Here, $r>0$, $Z_1,Z_2 \in \BB D^-_r$ and $W_1,W_2 \in \BB D^+_r$.
Then we have the two-loop box integral
\begin{multline*}
-4\pi^6 \cdot l^{(2)}(Z_1,Z_2;W_1,W_2) \\
= \iint_{\genfrac{}{}{0pt}{}{T_1 \in U(2)_{r_1}}{T_2 \in U(2)_{r_2}}}
\frac{|Z_1-W_1|^2 \cdot |T_1-T_2|^{-2} \, dV_{T_1} \, dV_{T_2}}
{|Z_1-T_1|^2 \cdot |Z_2-T_1|^2 \cdot
|W_1-T_1|^2 \cdot |Z_1-T_2|^2 \cdot |W_1-T_2|^2 \cdot |W_2-T_2|^2},
\end{multline*}
where we write $|Z-W|^2$ for $N(Z-W)$ in order to fit the formula on page.
Here, $r_1>r_2>0$, $Z_1, Z_2 \in \BB D^-_{r_1}$,
$W_1, W_2 \in \BB D^+_{r_2}$.
The factor $|Z_1-W_1|^2=N(Z_1-W_1)$ in the numerator is not involved in
integration and gives $l^{(2)}$ desired conformal properties
(Lemma \ref{conformal}).

In general, one obtains the integral from the box diagram by building
a rational function by writing a factor
$$
\begin{cases}
N(Y_i-Y_j)^{-1} &
\text{if there is a solid edge joining variables $Y_i$ and $Y_j$};  \\
N(Y_i-Y_j) & \text{if there is a dashed edge joining variables $Y_i$ and $Y_j$},
\end{cases}
$$
and then integrating over the solid vertices.
The issue of contours of integration (and, in particular, their relative
position) will be addressed at the end of this subsection.

The box diagrams are obtained by starting with the one-loop box diagram
(Figure \ref{12ladder}) and attaching the so-called ``slingshots'',
as explained in \cite{DHSS}.
Figures \ref{slingshot+one-loop1} and \ref{slingshot+one-loop2}
show the two possible results of attaching a slingshot to the one-loop diagram;
these are called the two-loop box diagrams.
Then Figures \ref{slingshot+two-loop1} and \ref{slingshot+two-loop2}
show two different results of attaching a slingshot to the two-loop box
diagrams; these are called the three-loop box diagrams.
In general, if one has an $(n-1)$-loop box diagram $d^{(n-1)}$ -- that is a
box diagram  obtained by attaching $n-2$ slingshots to the one-loop box diagram
-- there are four ways of attaching a slingshot to form an $n$-loop box diagram
$d^{(n)}$: the hollow vertex of the slingshot can be attached to any of the
vertices labeled $Z_1$, $Z_2$, $W_1$ or $W_2$.
For example, Figure \ref{slingshot+gendiagram}
illustrates a slingshot with the hollow vertex being attached to
the vertex labeled $Z_2$, then the ends of the slingshot with the ``string''
are attached to the adjacent vertices $Z_1$ and $W_1$, the hollow vertex of
the slingshot becomes solid and gets relabeled $T_n$, finally,
the vertex at the tip of the ``handle'' of the slingshot is labeled $Z_2$.
The other three cases are similar.
While there are four ways to attach a slingshot to $d^{(n-1)}$,
some of the resulting diagrams may be the same, since we treat
all slingshots as identical.
(The variables $T_1,\dots,T_n$ get integrated out, so we treat the
diagrams obtained by permuting these variables as the same.)
Thus there are only two two-loop box diagrams, and they differ only by
rearranging labels $Z_1$, $Z_2$, $W_1$, $W_2$
(Figures \ref{slingshot+one-loop1} and \ref{slingshot+one-loop2}).
Figure \ref{ladder-var-label} shows a particular example of an $n$-loop
box diagram called the $n$-loop ladder diagram.
The reason for the ``box'', ``ladder'' and ``loop'' terminology becomes
apparent when one switches to the momentum variables, see Figure \ref{ladders},
and more figures are given is \cite{DHSS}.

\begin{figure}
\begin{center}
\begin{subfigure}{0.15\textwidth}
\centering
\includegraphics[scale=1]{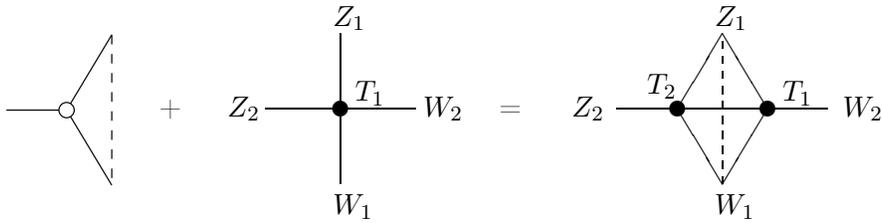}
\end{subfigure}
+
\begin{subfigure}{0.25\textwidth}
\centering
\setlength{\unitlength}{1mm}
\begin{picture}(30,28)
\put(15,14){\circle*{2}}
\put(15,4){\line(0,1){20}}
\put(5,14){\line(1,0){20}}
\put(0,13){$Z_2$}
\put(14,25){$Z_1$}
\put(26,13){$W_2$}
\put(14,0){$W_1$}
\put(17,15){$T_1$}
\end{picture}
\end{subfigure}
=
\begin{subfigure}{0.32\textwidth}
\centering
\setlength{\unitlength}{1mm}
\begin{picture}(40,28)
\put(14,14){\circle*{2}}
\put(26,14){\circle*{2}}
\put(6,14){\line(1,0){28}}
\put(14,14){\line(3,5){6}}
\put(14,14){\line(3,-5){6}}
\put(26,14){\line(-3,5){6}}
\put(26,14){\line(-3,-5){6}}
\multiput(20,4)(0,2){10}{\line(0,1){1}}
\put(0,13){$Z_2$}
\put(19,25){$Z_1$}
\put(36,13){$W_2$}
\put(19,0){$W_1$}
\put(28,15){$T_1$}
\put(10,16){$T_2$}
\end{picture}
\end{subfigure}
\end{center}
\caption{Attaching a slingshot to the one-loop box diagram.}
\label{slingshot+one-loop1}
\end{figure}

\begin{figure}
\begin{center}
\begin{subfigure}{0.15\textwidth}
\centering
\includegraphics[scale=1]{slingshot.eps}
\end{subfigure}
+
\begin{subfigure}{0.25\textwidth}
\centering
\setlength{\unitlength}{1mm}
\begin{picture}(30,28)
\put(15,14){\circle*{2}}
\put(15,4){\line(0,1){20}}
\put(5,14){\line(1,0){20}}
\put(0,13){$Z_1$}
\put(14,25){$W_2$}
\put(26,13){$W_1$}
\put(14,0){$Z_2$}
\put(17,15){$T_1$}
\end{picture}
\end{subfigure}
=
\begin{subfigure}{0.32\textwidth}
\centering
\setlength{\unitlength}{1mm}
\begin{picture}(40,28)
\put(14,14){\circle*{2}}
\put(26,14){\circle*{2}}
\put(6,14){\line(1,0){28}}
\put(14,14){\line(3,5){6}}
\put(14,14){\line(3,-5){6}}
\put(26,14){\line(-3,5){6}}
\put(26,14){\line(-3,-5){6}}
\multiput(20,4)(0,2){10}{\line(0,1){1}}
\put(0,13){$Z_1$}
\put(19,25){$W_2$}
\put(36,13){$W_1$}
\put(19,0){$Z_2$}
\put(28,15){$T_1$}
\put(10,16){$T_2$}
\end{picture}
\end{subfigure}
\end{center}
\caption{Another way of attaching a slingshot to the one-loop box diagram.}
\label{slingshot+one-loop2}
\end{figure}

\begin{figure}
\begin{center}
\begin{subfigure}{0.15\textwidth}
\centering
\includegraphics[scale=1]{slingshot.eps}
\end{subfigure}
+
\begin{subfigure}{0.3\textwidth}
\centering
\setlength{\unitlength}{1mm}
\begin{picture}(40,28)
\put(14,14){\circle*{2}}
\put(26,14){\circle*{2}}
\put(6,14){\line(1,0){28}}
\put(14,14){\line(3,5){6}}
\put(14,14){\line(3,-5){6}}
\put(26,14){\line(-3,5){6}}
\put(26,14){\line(-3,-5){6}}
\multiput(20,4)(0,2){10}{\line(0,1){1}}
\put(0,13){$Z_2$}
\put(19,25){$Z_1$}
\put(36,13){$W_2$}
\put(19,0){$W_1$}
\end{picture}
\end{subfigure}
=
\begin{subfigure}{0.32\textwidth}
\centering
\includegraphics[scale=1]{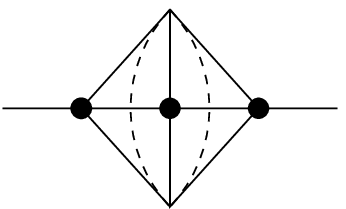}
\end{subfigure}
\hspace{-0.33\textwidth}
\begin{subfigure}{0.32\textwidth}
\centering
\setlength{\unitlength}{1mm}
\begin{picture}(46,28)
\put(0,13){$Z_2$}
\put(21,25){$Z_1$}
\put(41,13){$W_2$}
\put(21,0){$W_1$}
\end{picture}
\end{subfigure}
\end{center}
\caption{Attaching a slingshot to a two-loop box diagram.}
\label{slingshot+two-loop1}
\end{figure}

\begin{figure}
\begin{center}
\begin{subfigure}{0.15\textwidth}
\centering
\includegraphics[scale=1]{slingshot.eps}
\end{subfigure}
+
\begin{subfigure}{0.25\textwidth}
\centering
\setlength{\unitlength}{1mm}
\begin{picture}(31,30)
\put(16,9){\circle*{2}}
\put(16,21){\circle*{2}}
\put(16,1){\line(0,1){28}}
\put(6,15){\line(5,3){10}}
\put(6,15){\line(5,-3){10}}
\put(26,15){\line(-5,3){10}}
\put(26,15){\line(-5,-3){10}}
\multiput(6,15)(2,0){10}{\line(1,0){1}}
\put(0,14){$Z_2$}
\put(17,28){$Z_1$}
\put(28,14){$W_2$}
\put(17,0){$W_1$}
\end{picture}
\end{subfigure}
=
\begin{subfigure}{0.3\textwidth}
\centering
\setlength{\unitlength}{1mm}
\begin{picture}(39,33)
\put(23,10){\circle*{2}}
\put(23,22){\circle*{2}}
\put(13,16){\circle*{2}}
\put(5,16){\line(1,0){8}}
\put(23,10){\line(0,1){12}}
\put(13,16){\line(1,2){6}}
\put(23,22){\line(-2,3){4}}
\put(13,16){\line(1,-2){6}}
\put(23,10){\line(-2,-3){4}}
\put(13,16){\line(5,3){10}}
\put(13,16){\line(5,-3){10}}
\put(33,16){\line(-5,3){10}}
\put(33,16){\line(-5,-3){10}}
\multiput(13,16)(2,0){10}{\line(1,0){1}}
\multiput(19,4)(0,2){12}{\line(0,1){1}}
\put(0,15){$Z_2$}
\put(18,29){$Z_1$}
\put(35,15){$W_2$}
\put(18,0){$W_1$}
\end{picture}
\end{subfigure}
\end{center}
\caption{Another way of attaching a slingshot to a two-loop box diagram.}
\label{slingshot+two-loop2}
\end{figure}

\begin{figure}
\begin{center}
\begin{subfigure}{0.15\textwidth}
\centering
\includegraphics[scale=1]{slingshot.eps}
\end{subfigure}
+
\begin{subfigure}{0.35\textwidth}
\centering
\includegraphics[scale=1]{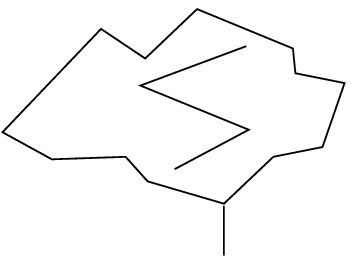}
\end{subfigure}
=
\begin{subfigure}{0.4\textwidth}
\centering
\includegraphics[scale=1]{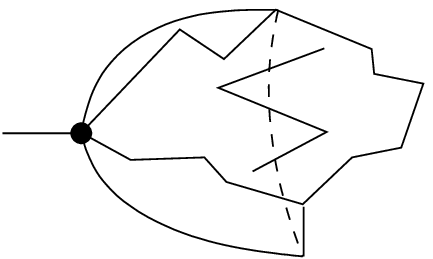}
\end{subfigure}
\end{center}
\vskip-33mm \hskip62mm $Z_1$ \hskip63.5mm $Z_1$

\vskip4mm \hskip77mm $W_2$ \hskip62.5mm $W_2$

\vskip-2mm \hskip106mm $T_n$

\vskip-2mm \hskip37mm $Z_2$ \hskip56mm $Z_2$

\vskip8mm \hskip65.5mm $W_1$ \hskip62.5mm $W_1$
\caption{Attaching a slingshot to a general box diagram.}
\label{slingshot+gendiagram}
\end{figure}

\begin{figure}
\begin{center}
\begin{subfigure}{0.2\textwidth}
\centering
\includegraphics[scale=.8]{Feynman2.eps}
\end{subfigure}
\begin{subfigure}{0.25\textwidth}
\centering
\includegraphics[scale=.8]{Feynman3.eps}
\end{subfigure}
\begin{subfigure}{0.42\textwidth}
\centering
\includegraphics[scale=.8]{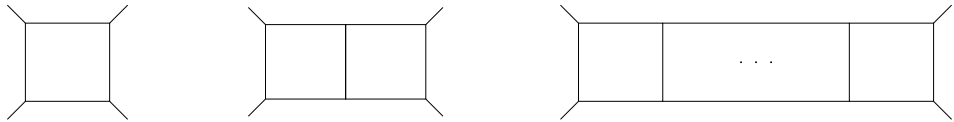}
\end{subfigure}
\end{center}
\caption{One-, two- and $n$-loop box or ladder diagrams in momentum variables.}
\label{ladders}
\end{figure}

In order to specify the cycles of integration, we introduce a partial ordering
on the variables in each $n$-loop box diagram $d^{(n)}$.
For the one-loop box diagram (Figure \ref{12ladder}) the relations are
$$
W_1,W_2 \prec T \prec Z_1,Z_2.
$$
Suppose that an $n$-loop box diagram $d^{(n)}$ is obtained
from an $(n-1)$-loop diagram $d^{(n-1)}$ by adding a slingshot.
Then $d^{(n)}$ will have one new relation for each solid edge of the slingshot,
plus those implied by the transitivity property.
Suppose, by induction, that the partial ordering for the variables in
$d^{(n-1)}$ are already specified.
We label the solid vertices in $d^{(n-1)}$ as $T_1, \dots, T_{n-1}$.
There are exactly four ways of attaching a slingshot to $d^{(n-1)}$
-- so that one of $Z_1$, $Z_2$, $W_1$ or $W_2$ becomes a solid vertex and gets
relabeled as $T_n$.
\begin{itemize}
\item
If $d^{(n)}$ is obtained from $d^{(n-1)}$ by adding the slingshot so
that $Z_1$ becomes a solid vertex, the relations in $d^{(n-1)}$ carry over
to $d^{(n)}$ with $Z_1$ replaced with $T_n$. Then we get new relations
$$
W_2 \prec T_n \prec Z_1, Z_2
$$
(plus those implied by the transitivity property).

\item
If $d^{(n)}$ is obtained from $d^{(n-1)}$ by adding the slingshot so
that $Z_2$ becomes a solid vertex, the relations in $d^{(n-1)}$ carry over
to $d^{(n)}$ with $Z_2$ replaced with $T_n$. Then we get new relations
$$
W_1 \prec T_n \prec Z_1, Z_2
$$
(plus those implied by the transitivity property).

\item
If $d^{(n)}$ is obtained from $d^{(n-1)}$ by adding the slingshot so
that $W_1$ becomes a solid vertex, the relations in $d^{(n-1)}$ carry over
to $d^{(n)}$ with $W_1$ replaced with $T_n$. Then we get new relations
$$
W_1, W_2 \prec T_n \prec Z_2
$$
(plus those implied by the transitivity property).

\item
If $d^{(n)}$ is obtained from $d^{(n-1)}$ by adding the slingshot so
that $W_2$ becomes a solid vertex, the relations in $d^{(n-1)}$ carry over
to $d^{(n)}$ with $W_2$ replaced with $T_n$. Then we get new relations
$$
W_1, W_2 \prec T_n \prec Z_1
$$
(plus those implied by the transitivity property).
\end{itemize}
This completely defines the partial ordering on the variables in $d^{(n)}$.
We choose real numbers $r_1,\dots,r_n>0$ such that $r_i < r_j$
whenever $T_i \prec T_j$ (it is easy to check that such a choice is
always possible). Then each $T_k$ gets integrated over $U(2)_{r_k}$.
Finally,
\begin{equation}  \label{r_max}
Z_i \in \BB D^-_{r_{\text{max},i}}, \quad \text{where }
r_{\text{max},i} = \max\{r_k ;\: T_k \prec Z_i \}, \qquad i=1,2;
\end{equation}
\begin{equation}  \label{r_min}
W_i \in \BB D^+_{r_{\text{min},i}}, \quad \text{where }
r_{\text{min},i} = \min\{r_k ;\: W_i \prec T_k \}, \qquad i=1,2.
\end{equation}

If desired, by Corollary 90 in \cite{FL1} the integrals over various $U(2)_r$'s
can be replaced by integrals over the Minkowski space $\BB M$ via an
appropriate ``Cayley transform''.
This means that these integrals are what the physicists call
``the off-shell Minkowski integrals''.

\subsection{Magic Identities}  \label{magic-id-subsect}

In this subsection we state the so-called ``magic identities'' due to
J.~M.~Drummond, J.~Henn, V.~A.~Smirnov and E.~Sokatchev \cite{DHSS}.
Informally, they assert that all conformal four-point box integrals obtained
by adding the same number of slingshots to the one-loop integral are equal.
In other words, only the number of slingshots matters and
not how they are attached.

\begin{thm}  \label{magic}
Let $l^{(n)}(Z_1,Z_2;W_1,W_2)$ and $\tilde l^{(n)}(Z_1,Z_2;W_1,W_2)$ be two
conformal four-point integrals corresponding to any two $n$-loop box diagrams,
then
$$
l^{(n)}(Z_1,Z_2;W_1,W_2) = \tilde l^{(n)}(Z_1,Z_2;W_1,W_2).
$$
\end{thm}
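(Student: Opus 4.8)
The plan is not to attack Theorem~\ref{magic} directly as an identity of explicit functions, but to recognize each $n$-loop integral $l^{(n)}$ as (essentially the reproducing kernel of) a $\mathfrak{u}(2,2)$-equivariant operator $L^{(n)}\colon{\cal H}^+\otimes{\cal H}^+\to{\cal H}^+\otimes{\cal H}^+$, to prove that $L^{(n)}$ depends only on the number of loops, and then to descend back to functions. For the one-loop case one writes $l^{(1)}(Z_1,Z_2;W_1,W_2)=(I_rf)(W_1,W_2)$ with $f(T)=N(Z_1-T)^{-1}N(Z_2-T)^{-1}$, expands $f$ as a function of $T$ via (\ref{1/N-expansion}) applied twice, observes that for $Z_1,Z_2\in\BB D^-_r$ this series lies in the completion of $\Zh^+$, on which the cited theorem on the maps $I_R$ identifies $I_r$ with the canonical equivariant embedding $I\colon(\rho_1,\Zh^+)\hookrightarrow(\pi^0_l,{\cal H}^+)\otimes(\pi^0_r,{\cal H}^+)$; dualizing the $(Z_1,Z_2)$-dependence through the pairing (\ref{H-pairing2})--(\ref{pairing2}) and Proposition~\ref{quotient-prop} turns $l^{(1)}$ into an operator $L^{(1)}$. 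Each slingshot then adds one further integration of the same $1/N$-type against a new internal vertex, with the numerator dashed-edge factors (e.g.\ $N(Z_1-W_1)$ in $l^{(2)}$) supplying precisely the conformal weights that make the construction close up (Lemma~\ref{conformal}); thus $L^{(n)}$ is $L^{(1)}$ post-composed with $n-1$ ``slingshot transforms,'' and the prescribed nesting of cycles in (\ref{r_max})--(\ref{r_min}) is exactly what makes every intermediate integral converge and makes the $I_R$-annihilation statements applicable.

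Next I would establish equivariance and apply Schur. Every edge factor $N(Y_i-Y_j)^{\pm1}$ and the form $dV$ transform under the conformal action by cocycles whose product is trivial, so each slingshot transform, and hence $L^{(n)}$, is $\mathfrak{u}(2,2)$-equivariant; sends ${\cal H}^+\otimes{\cal H}^+$ into itself; and, by the multiplicity-free decomposition (\ref{decomp-intro}) and Schur's lemma, acts on the $k$-th irreducible summand by a scalar $\mu^{(n)}_k$. A single slingshot transform acts on that summand by a scalar $\sigma_k$, and the four attachment modes (hollow vertex at $Z_1$, $Z_2$, $W_1$ or $W_2$) produce the \emph{same} $\sigma_k$: attaching at $Z_1$ versus $Z_2$, or at $W_1$ versus $W_2$, are conjugate under a relabeling symmetry, while attaching on the ``$Z$-side'' versus the ``$W$-side'' agree by the duality between the $\pi^0_l$- and $\pi^0_r$-constructions together with the antisymmetry of the pairing. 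Hence $\mu^{(n)}_k=\sigma_k^{\,n-1}\mu^{(1)}_k$, independent of how the slingshots were attached. I would pin down $\mu^{(1)}_k$ and $\sigma_k$ concretely by evaluating $L^{(1)}$ and one slingshot transform on the generators $1\otimes1$ and $(z_{ij})^k$, using Lemma~\ref{z^p} and Corollary~\ref{z^p-cor}; this is the operator form of the magic identity, $L^{(n)}=\tilde L^{(n)}$ (the content of Theorem~\ref{main-thm}).

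Finally I would descend to functions. Expanding $l^{(n)}$ and $\tilde l^{(n)}$ in the $t^l_{n\underline m}$-bases in all four variables --- convergent exactly on the region cut out by (\ref{r_max})--(\ref{r_min}) --- the coefficients are the matrix entries of $L^{(n)}$ and $\tilde L^{(n)}$, the potential extra terms being killed by the $I_R$-annihilation properties on the chosen cycles; since $L^{(n)}=\tilde L^{(n)}$ these coefficients coincide, so $l^{(n)}=\tilde l^{(n)}$ wherever both are defined, which is the claim.

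The soft part --- equivariance plus Schur --- is routine. The real difficulty, and the reason the Minkowski case is genuinely more delicate than the Euclidean one, is the cycle bookkeeping: one must check that the nested contours $U(2)_{r_k}$ with $r_i<r_j$ whenever $T_i\prec T_j$, together with $Z_i\in\BB D^-$ and $W_i\in\BB D^+$, are simultaneously the conditions under which every intermediate integral converges, the $I_R$-type annihilation lemmas apply so that no $\Zh^0\oplus\Zh^-$ contribution survives, and the operator is faithfully recovered from its kernel --- a ``wrong'' choice making the integral collapse to $0$. I expect this compatibility check to be the main obstacle.
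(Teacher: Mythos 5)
Your overall strategy --- pass to the equivariant operators $L^{(n)}$, use the multiplicity-free decomposition (\ref{tensor-decomp}) with Schur's lemma, and then descend to the kernels --- is the strategy of this paper for the \emph{operator} statement, but two of your steps fail. First, the claim that attaching a slingshot is composition with a fixed equivariant ``slingshot transform'' acting by a scalar $\sigma_k$ on the $k$-th summand, so that $\mu^{(n)}_k=\sigma_k^{\,n-1}\mu^{(1)}_k$, is quantitatively false: the paper computes $\mu^{(1)}_k=0$ for all $k\ge2$ (indeed $L^{(1)}$ is the projection onto the $k=1$ summand), while $\mu^{(2)}_k=\frac{(-1)^{k+1}}{k(k-1)}\ne0$ for $k\ge2$, so no product formula of that shape can hold. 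The slingshot integration ties the new vertex $T_n$ to three of the four external variables simultaneously and does not factor through an endomorphism of ${\cal H}^+\otimes{\cal H}^+$ composed with $L^{(n-1)}$; the correct recursion (Corollary \ref{ladder-recursive-cor} and its analogue in the general case) holds only on the generators $1\otimes(z'_{11})^k$ and involves multiplication by powers of $w_{11}$, which mixes the irreducible summands and produces the non-geometric eigenvalues $\mu^{(n)}_k$ of Theorem \ref{main-thm}.

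Second, the descent from $L^{(n)}=\tilde L^{(n)}$ back to $l^{(n)}=\tilde l^{(n)}$ is not justified. The operator sees the kernel only through the pairings (\ref{H-pairing2}) and (\ref{pairing2}), which annihilate large invariant subspaces (e.g.\ $\Zh_2^-\oplus J_2\oplus\Zh^+$ in the relevant variable), so two kernels whose difference lies in the annihilated part define the same operator; your assertion that the coefficients of the expansion of $l^{(n)}$ in the basis (\ref{Zh-basis}) ``are the matrix entries of $L^{(n)}$'' fails unless one first proves that $l^{(n)}$ is harmonic in each of the four variables, which pins down the admissible powers of $N$ and makes the pairing faithful. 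That harmonicity is exactly the ingredient the paper does not supply: it proves only the operator identity (Corollary \ref{magic-id-operators}) and states the implication to Theorem \ref{magic} as a conditional remark. The proof of Theorem \ref{magic} that the paper actually records is the Euclidean-case argument of \cite{DHSS}: two base identities verified by direct computation (the symmetry $l^{(2)}(Z_1,Z_2;W_1,W_2)=l^{(2)}(Z_2,Z_1;W_2,W_1)$ and the equality of the two three-loop integrals, Figure \ref{magic-pf}), followed by induction on the number of slingshots --- a route entirely different from, and not subsumed by, your operator argument.
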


In particular, we can parametrize the conformal four-point integrals by the
number of loops in the diagrams and choose a single representative from the set
of all $n$-loop diagrams, such as the $n$-loop ladder diagram
(Figures \ref{ladders} and \ref{ladder-var-label}).

The original paper \cite{DHSS} gives a proof for the Euclidean metric case and
claims that the result is also true for the Minkowski metric case.
In the Euclidean case, the box integrals are produced by making all
variables belong to $\BB H$ and replacing all cycles of integration by $\BB H$.
Then $N(X-Y)$ is just the square of the Euclidean distance between $X$ and $Y$.
There are no convergence issues whatsoever.
On the other hand, the Minkowski case (which is the case we consider)
is much more subtle. In order to deal with convergence issues, we must
consider the so-called ``off-shell Minkowski integrals'' or make the cycles
of integration to be various $U(2)_r$'s.
Then the relative position of cycles becomes very important.
As can be seen in the course of proof of Theorem \ref{main-thm}, choosing
the ``wrong'' cycles typically results in integral being zero.

The proof of Theorem \ref{magic} given in \cite{DHSS} can be outlined as
follows. First, they prove a symmetry relationship for the two-loop integrals
represented by the two-loop diagram in Figure \ref{12ladder}
$$
l^{(2)}(Z_1,Z_2;W_1,W_2) = l^{(2)}(Z_2,Z_1;W_2,W_1);
$$
this is done by direct computation.
Then they prove the magic identity for the integrals represented by
the three-loop diagrams in Figures \ref{slingshot+two-loop1} and
\ref{slingshot+two-loop2}
$$
l^{(3)}(Z_1,Z_2;W_1,W_2) = \tilde l^{(3)}(Z_1,Z_2;W_1,W_2);
$$
this is also done by direct computation.
These identities can be represented by the box diagrams,
as shown in Figure \ref{magic-pf}.
Finally, they apply induction on the number of loops or slingshots.

\begin{figure}
\begin{center}
\begin{subfigure}{0.23\textwidth}
\centering
\setlength{\unitlength}{.75mm}
\begin{picture}(40,28)
\put(14,14){\circle*{2}}
\put(26,14){\circle*{2}}
\put(6,14){\line(1,0){28}}
\put(14,14){\line(3,5){6}}
\put(14,14){\line(3,-5){6}}
\put(26,14){\line(-3,5){6}}
\put(26,14){\line(-3,-5){6}}
\multiput(20,4)(0,2){10}{\line(0,1){1}}
\put(0,13){\footnotesize $Z_2$}
\put(19,25){\footnotesize $Z_1$}
\put(36,13){\footnotesize $W_2$}
\put(19,0){\footnotesize $W_1$}
\end{picture}
\end{subfigure}
=
\begin{subfigure}{0.16\textwidth}
\centering
\setlength{\unitlength}{.75mm}
\begin{picture}(31,30)
\put(16,9){\circle*{2}}
\put(16,21){\circle*{2}}
\put(16,1){\line(0,1){28}}
\put(6,15){\line(5,3){10}}
\put(6,15){\line(5,-3){10}}
\put(26,15){\line(-5,3){10}}
\put(26,15){\line(-5,-3){10}}
\multiput(6,15)(2,0){10}{\line(1,0){1}}
\put(0,14){\footnotesize $Z_2$}
\put(17,28){\footnotesize $Z_1$}
\put(28,14){\footnotesize $W_2$}
\put(17,0){\footnotesize $W_1$}
\end{picture}
\end{subfigure}
, \quad
\begin{subfigure}{0.25\textwidth}
\centering
\includegraphics[scale=.75]{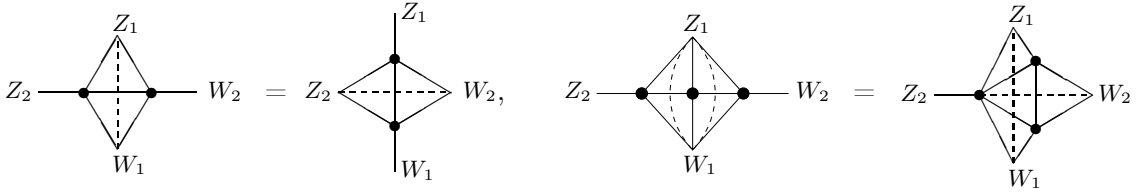}
\end{subfigure}
\hspace{-0.265\textwidth}
\begin{subfigure}{0.25\textwidth}
\centering
\setlength{\unitlength}{.75mm}
\begin{picture}(46,28)
\put(0,13){\footnotesize $Z_2$}
\put(21,25){\footnotesize $Z_1$}
\put(41,13){\footnotesize $W_2$}
\put(21,0){\footnotesize $W_1$}
\end{picture}
\end{subfigure}
=
\begin{subfigure}{0.22\textwidth}
\centering
\setlength{\unitlength}{.75mm}
\begin{picture}(39,33)
\put(23,10){\circle*{2}}
\put(23,22){\circle*{2}}
\put(13,16){\circle*{2}}
\put(5,16){\line(1,0){8}}
\put(23,10){\line(0,1){12}}
\put(13,16){\line(1,2){6}}
\put(23,22){\line(-2,3){4}}
\put(13,16){\line(1,-2){6}}
\put(23,10){\line(-2,-3){4}}
\put(13,16){\line(5,3){10}}
\put(13,16){\line(5,-3){10}}
\put(33,16){\line(-5,3){10}}
\put(33,16){\line(-5,-3){10}}
\multiput(13,16)(2,0){10}{\line(1,0){1}}
\multiput(19,4)(0,2){12}{\line(0,1){1}}
\put(-1,15){\footnotesize $Z_2$}
\put(18,29){\footnotesize $Z_1$}
\put(34,15){\footnotesize $W_2$}
\put(18,0){\footnotesize $W_1$}
\end{picture}
\end{subfigure}
\end{center}
\caption{Ingredients of the proof of the magic identities given in \cite{DHSS}.}
\label{magic-pf}
\end{figure}

\subsection{Statement of the Main Result}

Using the bilinear pairing (\ref{H-pairing2}), we obtain integral operators
$L^{(n)}$ on $(\pi^0_l, {\cal H}^+) \otimes (\pi^0_r, {\cal H}^+)$
that have  the conformal integrals $l^{(n)}$ as their kernels:
\begin{multline*}
L^{(n)} (\phi_1 \otimes \phi_2)(W_1,W_2) \\
= \Bigl(\frac{i}{2\pi^3}\Bigr)^2
\iint_{\genfrac{}{}{0pt}{}{Z_1 \in U(2)_{R_1}}{Z_2 \in U(2)_{R_2}}}
l^{(n)}(Z_1,Z_2;W_1,W_2) \cdot (\degt_{Z_1} \phi_1)(Z_1)
\cdot (\degt_{Z_2} \phi_2)(Z_2) \,\frac{dV_1}{N(Z_1)} \frac{dV_2}{N(Z_2)},
\end{multline*}
where $\phi_1, \phi_2 \in {\cal H}^+$, $R_1 > r_{\text{max},1}$,
$R_2 > r_{\text{max},2}$, $W_1 \in \BB D^+_{r_{\text{min},1}}$,
$W_2 \in \BB D^+_{r_{\text{min},2}}$
(recall that $r_{\text{max},i}$ and $r_{\text{min},i}$ are defined in
(\ref{r_max}) and (\ref{r_min})).
First, we state a preliminary version of the main result.

\begin{prop}
For each $\phi_1, \phi_2 \in {\cal H}^+$, the function
$L^{(n)} (\phi_1 \otimes \phi_2)(W_1,W_2)$ is polynomial and harmonic
in each variable. In other words,
$L^{(n)} (\phi_1 \otimes \phi_2)(W_1,W_2) \in {\cal H}^+ \otimes {\cal H}^+$.
Moreover, the operator
$$
L^{(n)}: (\pi^0_l, {\cal H}^+) \otimes (\pi^0_r, {\cal H}^+)
\to (\pi^0_l, {\cal H}^+) \otimes (\pi^0_r, {\cal H}^+)
$$
is $\mathfrak{gl}(2,\HC)$-equivariant.
\end{prop}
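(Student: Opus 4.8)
The plan is to prove both assertions together by induction on the number of loops $n$, exhibiting $L^{(n)}$ as a composition of maps each of which is already known --- from Sections \ref{preliminaries} and \ref{results-summary} --- to be $\mathfrak{gl}(2,\HC)$-equivariant and to carry the relevant spaces into one another. Beyond Fubini's theorem --- applicable because the cycles $U(2)_{r_1},\dots,U(2)_{r_n},U(2)_{R_1},U(2)_{R_2}$ have pairwise distinct radii and $W_i\in\BB D^+_{r_{\mathrm{min},i}}$, so every factor $N(\,\cdot-\cdot\,)$ occurring is nonvanishing on the product cycle over which it is integrated --- the only analytic ingredient is the reproducing identity
\begin{equation*}
\frac i{2\pi^3}\int_{Z\in U(2)_R}\frac{(\degt\phi)(Z)}{N(Z)\cdot N(Z-T)}\,dV=\phi(T),
\qquad \phi\in{\cal H}^+,\quad T\in\BB D^+_R,
\end{equation*}
which is simply the composite of the isomorphism $\phi\mapsto\degt\phi/N(Z)$ of Proposition \ref{quotient-prop} with its inverse (\ref{inverse-iso}).

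\emph{Base case $n=1$.} Here $r_{\mathrm{max},i}=r_{\mathrm{min},i}=r$. Substituting the definition of $l^{(1)}$ into that of $L^{(1)}$ and performing the $Z_1$- and $Z_2$-integrals first (each $T\in U(2)_r$ lies in $\BB D^+_{R_i}$ because $r<R_i$), the reproducing identity collapses them and gives $L^{(1)}(\phi_1\otimes\phi_2)=I_r(\phi_1\cdot\phi_2)$, where $\phi_1\cdot\phi_2\in\Zh^+$ is the pointwise product. Since $W_1,W_2\in\BB D^+_r$, Theorem 12 and Corollary 14 of \cite{FL3} identify $I_r\big|_{\Zh^+}$ with the equivariant embedding $I\colon(\rho_1,\Zh^+)\hookrightarrow(\pi^0_l,{\cal H}^+)\otimes(\pi^0_r,{\cal H}^+)$, while pointwise multiplication $(\phi_1,\phi_2)\mapsto\phi_1\cdot\phi_2$ is an equivariant map $(\pi^0_l,{\cal H}^+)\otimes(\pi^0_r,{\cal H}^+)\to(\rho_1,\Zh^+)$ because the $\pi^0_l$- and $\pi^0_r$-cocycles multiply to the $\rho_1$-cocycle. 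Thus $L^{(1)}=I\circ(\text{mult})$ has the asserted properties; as $\text{mult}(1\otimes1)=1$ and $I(1)=1\otimes1$, and $1\otimes1$ generates the (multiplicity-one) summand $(\rho_1,\Zh^+)$, the operator $L^{(1)}$ is in fact the projection of ${\cal H}^+\otimes{\cal H}^+$ onto that summand.

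\emph{Inductive step.} Write $d^{(n)}=d^{(n-1)}+\text{slingshot}$; by the symmetry among the four external vertices it suffices to treat one case, say the hollow vertex glued at $Z_2$. There $l^{(n)}$ arises from $l^{(n-1)}$ by replacing the variable $Z_2$ with a new integration variable $T_n\in U(2)_{r_n}$, multiplying by $N(Z_1-W_1)$ and dividing by $N(Z_1-T_n)\,N(W_1-T_n)\,N(Z_2-T_n)$ --- the numerator being the dashed ``bowstring'' edge and the denominator the new solid edges $W_1\prec T_n\prec Z_1,Z_2$, cf.\ the formula for $l^{(2)}$ --- and integrating $\tfrac i{2\pi^3}\,dV_{T_n}$. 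Feeding this into $L^{(n)}$ and performing the $Z_2$-integral first (legitimate since $T_n\prec Z_2$ forces $r_n<R_2$, so $T_n\in\BB D^+_{R_2}$), the reproducing identity turns $(\degt\phi_2)(Z_2)$ into $\phi_2(T_n)$ and leaves a double integral over $Z_1\in U(2)_{R_1}$, $T_n\in U(2)_{r_n}$ of $l^{(n-1)}(Z_1,T_n;W_1,W_2)$ against $(\degt\phi_1)(Z_1)\cdot\phi_2(T_n)$, multiplied by $N(Z_1-W_1)$ and divided by $N(Z_1-T_n)\,N(W_1-T_n)$. A conformal-weight count (using that $l^{(n-1)}$ is conformally covariant in each variable, which one carries along in the induction) shows that the $T_n$-dependent part --- namely $l^{(n-1)}(Z_1,T_n;W_1,W_2)$ divided by $N(Z_1-T_n)\,N(W_1-T_n)$ --- is a weight-two object in $T_n$, so the $T_n$-integral is precisely the $\mathfrak{gl}(2,\HC)$-invariant pairing (\ref{pairing2}) of a $(\varpi_2^l,\Zh)$-type representative against $\phi_2\in(\pi^0_r,{\cal H}^+)$; the structure theorem for $(\varpi_2^l,\Zh)$ recalled in Section \ref{results-summary} and Proposition \ref{quotient-prop} then identify the space the outcome lies in and --- because $W_1,W_2\in\BB D^+$ --- guarantee it lands in the ${\cal H}^+$-quotient, not in $I_2^\pm$, $J_2$ or $\Zh_2^-$. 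What remains is an ordinary weight-one cycle integral against $\phi_1$, which by the same mechanism produces an element of ${\cal H}^+$ in $W_1$ and, reconstituting $l^{(n-1)}$ from its own $T_1,\dots,T_{n-1}$-integrals, an element of ${\cal H}^+$ in $W_2$. Every map appearing --- the reproducing maps, the pointwise products, and the slingshot operator built from (\ref{pairing2}) and the $\varpi_2$-picture --- is $\mathfrak{gl}(2,\HC)$-equivariant with source and target among ${\cal H}^\pm$, $\Zh^\pm$ and the quotients of Section \ref{results-summary}, so $L^{(n)}$ carries ${\cal H}^+\otimes{\cal H}^+$ equivariantly into itself and its output is polynomial and harmonic in each variable. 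This closes the induction.

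\emph{Where the difficulty lies.} The content is not in any single step but in the bookkeeping: at each stage one must check that the point at which a reproducing identity is applied lies in the \emph{correct} domain $\BB D^\pm_R$ --- which is exactly why the partial order $\prec$ and the nested radii in (\ref{r_max})--(\ref{r_min}) were arranged as they were --- and one must verify that each weight-shifted slingshot kernel, once transported into the $(\varpi_2^l,\Zh)$- or $(\varpi_2^r,\Zh)$-picture, lands back in the ${\cal H}^+$-quotient rather than in $J_2$, $I_2^\pm$, $\Zh_2^-$ or the ${\cal H}^-$-part. It is this positivity of the contours that forces the superscript $+$ in ${\cal H}^+\otimes{\cal H}^+$ and, equivalently, upgrades holomorphy to polynomiality and makes the output harmonic in each variable --- something invisible in the integral itself, since $l^{(n)}(Z_1,Z_2;W_1,W_2)$ is \emph{not} harmonic in $W_1$ before $T_1,\dots,T_n$ have been integrated out.
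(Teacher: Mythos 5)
Your base case is correct and clean: the reproducing identity does collapse the $Z_1$- and $Z_2$-integrals, $L^{(1)}=I\circ(\text{mult})$, and both factors are honestly $\mathfrak{gl}(2,\HC)$-equivariant, recovering the known description of $L^{(1)}$ as the projection onto $(\rho_1,\Zh^+)$. The inductive step, however, has a genuine gap at exactly the point you flag as ``where the difficulty lies'' but do not resolve. Your weight count correctly identifies the $T_n$-dependent part of the kernel as a $(\varpi_2^l,\Zh)$-type object, but the conformal weight only fixes the representation in which the kernel's expansion lives; it does not determine \emph{which} invariant subspaces ($\Zh^+$, $J_2$, $I_2^\pm$, $\Zh_2^-$) actually occur in that expansion, and the assertion that ``because $W_1,W_2\in\BB D^+$'' the outcome lands in the ${\cal H}^+$-quotient is precisely the claim that needs proof. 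In the paper this is done by expanding $l^{(n-1)}(T_n,\cdot\,;\cdot,\cdot)$ and $N(W_2-T_n)^{-1}$ in the basis (\ref{Zh-basis}), observing that at every internal vertex the number of solid edges exceeds the number of dashed edges by one --- which forces only the powers $N(T_n)^{-1-m}$ with $m\ge 0$ to appear --- and then invoking the orthogonality relations (\ref{orthogonality}) and Corollary \ref{z^p-cor} to see which terms survive. Without this (or an equivalent) computation, nothing in your argument rules out contributions from $J_2$ or $I_2^-$, i.e.\ nothing forces the output to be polynomial and harmonic in $W_1,W_2$.

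The second gap concerns equivariance of $L^{(n)}$ itself, as opposed to conformal covariance of the kernel $l^{(n)}$ (which is Lemma \ref{conformal} and which you correctly carry along). The map $\phi\mapsto N(Z)^{-1}\degt\phi$ implicit in the definition of $L^{(n)}$ via the pairing (\ref{H-pairing2}) is equivariant only as a map into the quotient $\Zh/(I_2^-\oplus\Zh^+)$, not into $\Zh$; consequently ``a composition of equivariant maps'' is not well defined until one knows that the integral operator kills the ambiguity, i.e.\ annihilates $I_2^-\otimes{\cal H}^+$ and the relevant part of $\Zh^+\otimes{\cal H}^+$. This is the role of the paper's auxiliary operator $\mathring{L}^{(n)}$ together with Lemmas \ref{annihilator-lem1}--\ref{annihilator-lem2} (and their general-case analogues), after which $L^{(n)}=\mathring{L}^{(n)}\circ\bigl(N(Z)^{-1}\degt\bigr)$ descends to an equivariant map on ${\cal H}^+\otimes{\cal H}^+$. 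Finally, even granting equivariance, the identification of the image with ${\cal H}^+\otimes{\cal H}^+$ in the paper rests on the explicit evaluation of $L^{(n)}$ on the generators $1\otimes(z'_{11})^k$ (Lemma \ref{gen-val-lemma}), whose output $\sum_p a^k(n,p)(w_{11})^{k-p}(w'_{11})^p$ is visibly polynomial and harmonic; your proposal contains no substitute for this computation, and the induction as you set it up does not close, because after the $Z_2$-reproducing step the variable $Z_1$ is still entangled with $T_n$ and $W_1$ through the new slingshot edges, so the remaining integral is not $L^{(n-1)}$ applied to known data. You would need to restrict to a generating set such as $(z_{11})^k\otimes 1$ (as the paper does) to make the outer $Z_1$-integral reproduce cleanly, and then you are back to needing the equivariance and annihilation machinery to extend from generators to all of ${\cal H}^+\otimes{\cal H}^+$.
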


Our goal is to compute the actions of these integral operators 
$L^{(n)}$ on $(\pi^0_l, {\cal H}^+) \otimes (\pi^0_r, {\cal H}^+)$
and to prove the magic identities for $L^{(n)}$.

The decomposition of $(\pi^0_l, {\cal H}^+) \otimes (\pi^0_r, {\cal H}^+)$
into irreducible components is well known.
This was done in a greater generality, for example, in \cite{JV}.
We provide a summary of this result following \cite{FL1,L}.
Let $k=1,2,3,\dots$, and denote by $\BB C^{k \times k}$ the space of complex
$k \times k$ matrices.
Then $\widetilde{\Zh} \otimes \BB C^{k \times k}$ can be thought of as
the space of holomorphic functions on $\HC$ (possibly with singularities)
with values in $\BB C^{k \times k}$.
Recall the actions $\rho_k$ of $GL(2,\HC)$ on
$\widetilde{\Zh} \otimes \BB C^{k \times k}$ described by
equation (60) in \cite{FL1}:
\begin{equation}  \label{rho_k-action}
\rho_k(h): \: F(Z) \quad \mapsto \quad \bigl( \rho_k(h)F \bigr)(Z) =
\frac {\tau_{\frac{k-1}2}(cZ+d)^{-1}}{N(cZ+d)} \cdot
F \bigl( (aZ+b)(cZ+d)^{-1} \bigr) \cdot
\frac {\tau_{\frac{k-1}2}(a'-Zc')^{-1}}{N(a'-Zc')},
\end{equation}
where
$h = \bigl(\begin{smallmatrix} a' & b' \\ c' & d' \end{smallmatrix}\bigr)
\in GL(2,\HC)$,
$h^{-1} = \bigl(\begin{smallmatrix} a & b \\ c & d \end{smallmatrix}\bigr)$,
expressions $cZ+d$ and $a'-Zc'$ are regarded as elements of $\HC^{\times}$,
and $\tau_l: \HC^{\times} \to\operatorname{Aut}(\BB C^{2l+1}) \subset
\BB C^{(2l+1) \times (2l+1)}$
is the irreducible $(2l+1)$-dimensional representation of $\HC^{\times}$
described in Subsection \ref{matrix-coeff-subsection},
$l=0,\frac12,1,\frac32, \dots$.

Differentiating this action, we obtain an action of
$\mathfrak{gl}(2,\HC)$ which preserves $\Zh \otimes \BB C^{k \times k}$
and $\Zh^+ \otimes \BB C^{k \times k}$.
As a special case of Proposition 4.7 in \cite{JV}
(see also the discussion preceding the proposition and references therein),
we have:

\begin{thm} \label{JV-thm}
The representations $(\rho_k, \Zh^+ \otimes \BB C^{k \times k})$,
$k=1,2,3,\dots$, of $\mathfrak{sl}(2,\HC)$ are irreducible,
pairwise non-isomorphic.
They possess inner products which make them unitary representations of
the real form $\mathfrak{su}(2,2)$ of $\mathfrak{sl}(2,\HC)$.
\end{thm}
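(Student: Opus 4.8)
The plan is to obtain all three assertions --- irreducibility, pairwise non-isomorphism, and unitarizability for $\mathfrak{su}(2,2)$ --- by realizing each $(\rho_k,\Zh^+\otimes\BB C^{k\times k})$ as a constituent of the tensor product $(\pi^0_l,{\cal H}^+)\otimes(\pi^0_r,{\cal H}^+)$ and then analyzing it through its $U(2)\times U(2)$-type structure. Since $(\pi^0_l,{\cal H}^+)$ and $(\pi^0_r,{\cal H}^+)$ are irreducible and unitary for $\mathfrak{su}(2,2)$ with the inner product (\ref{inner-prod}), the Hilbert-space completion $\B{{\cal H}^+\otimes{\cal H}^+}$ is a unitary $\mathfrak{su}(2,2)$-representation, and the decomposition (\ref{tensor-decomp}) exhibits the $\Zh^+\otimes\BB C^{k\times k}$ (with the $\rho_k$-action) as the $\mathfrak{gl}(2,\HC)$-submodules. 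Once we know these submodules are mutually orthogonal --- which is automatic as soon as they are pairwise non-isomorphic --- the restriction of the tensor-product inner product to each of them is $\mathfrak{su}(2,2)$-invariant and positive-definite, which gives unitarizability. Thus the real work is concentrated in non-isomorphism and irreducibility.

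First I would work out the $U(2)\times U(2)$-decomposition of $\Zh^+\otimes\BB C^{k\times k}$ under the action $\rho_k$. Using the basis $t^l_{n\,\underline m}(Z)\cdot N(Z)^j$ of $\Zh^+$ (part 3 of the Proposition in Subsection \ref{matrix-coeff-subsection}) together with the twisting factors $\tau_{\frac{k-1}2}(\cdot)^{-1}$ on both sides of (\ref{rho_k-action}), one sees that the $K$-types fill out a cone analogous to the one for the scalar case $k=1$ recorded in Figure \ref{decomposition-fig}, and --- this is the key point --- that there is a \emph{unique minimal} $K$-type, whose isomorphism class depends on $k$ (through the $\tau_{\frac{k-1}2}$-twist and the attendant determinant-power shift) in such a way that the minimal $K$-types for different $k$ are pairwise distinct. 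This already yields pairwise non-isomorphism of the $(\rho_k,\Zh^+\otimes\BB C^{k\times k})$, and, by orthogonality of distinct $K$-isotypic components inside a unitary representation, also the mutual orthogonality of the summands in (\ref{tensor-decomp}) needed for the previous paragraph.

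For irreducibility I would argue as for a unitary lowest-weight (holomorphic-type) module. Write $\mathfrak{sl}(2,\HC)\simeq\mathfrak{sl}(4,\BB C)=\mathfrak k\oplus\mathfrak p^+\oplus\mathfrak p^-$ for the triangular decomposition adapted to the Hermitian symmetric pair $(\mathfrak{su}(2,2),\mathfrak k)$. Two things must be checked: (i) repeated application of $\mathfrak p^+$ to the minimal $K$-type generates all of $\Zh^+\otimes\BB C^{k\times k}$, a finite verification at each level of the cone; and (ii) any nonzero $\mathfrak{sl}(2,\HC)$-submodule, being $K$-stable, contains some $K$-type, and by applying the lowering operators $\mathfrak p^-$ one descends along the cone to the minimal $K$-type, whence the submodule is everything. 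Alternatively, one may simply invoke the classical fact --- which is exactly Proposition 4.7 of \cite{JV} --- that the tensor product of two irreducible unitary lowest-weight modules of a Hermitian Lie group decomposes discretely and multiplicity-freely into irreducible unitary lowest-weight modules; then (\ref{tensor-decomp}) identifies these constituents with the $(\rho_k,\Zh^+\otimes\BB C^{k\times k})$.

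I expect the main obstacle to be the $K$-type bookkeeping in the second paragraph: because the action (\ref{rho_k-action}) twists on both sides by $\tau_{\frac{k-1}2}$, the decomposition of $\Zh^+\otimes\BB C^{k\times k}$ into $U(2)\times U(2)$-types, and in particular the identification of its unique minimal member together with the verification that it indeed generates, is noticeably more involved than the scalar case $k=1$; keeping precise track of the determinant-power shifts and of the $V_{\frac{k-1}2}$-factors on each side is where the real work lies. Everything else --- transporting the inner product from $\B{{\cal H}^+\otimes{\cal H}^+}$, cyclicity of the minimal $K$-type under $\mathfrak p^+$, and $\mathfrak p^-$-descent to it --- is routine once that combinatorial picture is in place.
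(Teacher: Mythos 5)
Your proposal takes a genuinely different route from the paper, because the paper offers no proof at all: Theorem \ref{JV-thm} is stated as ``a special case of Proposition 4.7 in \cite{JV}'' and the burden is entirely outsourced to Jakobsen--Vergne. Your closing ``alternatively, invoke Proposition 4.7 of \cite{JV}'' is therefore exactly what the author does, while your main line --- embedding each $(\rho_k,\Zh^+\otimes\BB C^{k\times k})$ into the unitary tensor product, reading off unitarizability by restriction, separating the $k$'s by their minimal $U(2)\times U(2)$-types, and proving irreducibility by the standard lowest-weight-module argument ($\mathfrak p^+$-cyclicity of the minimal $K$-type plus $\mathfrak p^-$-descent, the latter using unitarity to rule out extra $\mathfrak p^-$-null vectors) --- is a self-contained argument the paper never carries out. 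Two cautions. First, be careful not to quote (\ref{tensor-decomp}) itself: in the paper that decomposition is \emph{deduced from} \cite{JV}, i.e.\ from the very theorem you are proving, so you must use only the containment statement (Lemma 10 of \cite{FL1}, which exhibits $(\rho_k,\Zh^+\otimes\BB C^{k\times k})$ inside ${\cal H}^+\otimes{\cal H}^+$ generated by $(z_{ij}-z'_{ij})^{k-1}$), not the assertion that these summands exhaust the tensor product; fortunately the containment is all your argument needs. Second, the two computations you defer --- that $\Zh^+\otimes\BB C^{k\times k}$ under (\ref{rho_k-action}) has a unique minimal $K$-type $V_{\frac{k-1}2}\boxtimes V_{\frac{k-1}2}$ (up to a determinant twist) distinguishing the $k$'s, and that this $K$-type is $\mathfrak p^+$-cyclic --- are where all the content sits; as written they are plausible and standard but not yet proofs. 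What your approach buys is independence from \cite{JV} and an explicit picture of the $K$-type cone; what the paper's citation buys is brevity and the full strength of the general multiplicity-free decomposition of tensor products of holomorphic discrete-series-type modules.
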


According to \cite{JV}, we have the following decomposition of the tensor
product $(\pi^0_l, {\cal H}^+) \otimes (\pi^0_r, {\cal H}^+)$ into irreducible
subrepresentations of $\mathfrak{gl}(2,\HC)$:
\begin{equation}  \label{tensor-decomp}
(\pi^0_l, {\cal H}^+) \otimes (\pi^0_r, {\cal H}^+) \simeq
\bigoplus_{k=1}^{\infty} (\rho_k,\Zh^+\otimes \BB C^{k \times k})
\end{equation}
(see also Subsection 5.1 in \cite{FL1}).
We outline the proof of this statement.
First of all, by Lemma 10 in \cite{FL1}, the tensor product
$(\pi^0_l, {\cal H}^+) \otimes (\pi^0_r, {\cal H}^+)$ contains each
$(\rho_k,\Zh^+\otimes \BB C^{k \times k})$ with 
$$
(\rho_1,\Zh^+) \quad \text{generated by} \quad 1 \otimes 1
$$
and
$$
(\rho_k,\Zh^+\otimes \BB C^{k \times k}) \quad \text{generated by} \quad
(z_{ij}-z'_{ij})^{k-1} , \qquad k \ge 2.
$$
Then one checks that the direct sum
$\bigoplus_{k=1}^{\infty} (\rho_k,\Zh^+\otimes \BB C^{k \times k})$
exhausts all of $(\pi^0_l, {\cal H}^+) \otimes (\pi^0_r, {\cal H}^+)$
by comparing the two sides as representations of $U(2) \times U(2)$ or
$\mathfrak{u}(2) \times \mathfrak{u}(2)$.

In order to state the full version of the main result, we introduce
coefficients $a^k(n,p)$, where $k=0,1,2,\dots$ and $0 \le p \le k$,
that are defined by the following recursive relations:
\begin{equation}  \label{a-ini}
a^k(1,p) = \frac1{k+1}, \qquad p=0,1,\dots,k,
\end{equation}
and
\begin{equation}  \label{a-rec-rel}
a^k(n+1,p) = \sum_{q=p}^k \frac1{q+1} \cdot a^k(n,q).
\end{equation}

\begin{thm}  \label{main-thm}
The operator $L^{(n)}$ associated to any $n$-loop box diagram
maps ${\cal H}^+ \otimes {\cal H}^+$ into ${\cal H}^+ \otimes {\cal H}^+$,
and the map
\begin{equation}  \label{Ln}
L^{(n)} : (\pi^0_l, {\cal H}^+) \otimes (\pi^0_r, {\cal H}^+) \to
(\pi^0_l, {\cal H}^+) \otimes (\pi^0_r, {\cal H}^+)
\end{equation}
is $\mathfrak{gl}(2,\HC)$-equivariant.
If $x \in (\pi^0_l, {\cal H}^+) \otimes (\pi^0_r, {\cal H}^+)$
belongs to an irreducible component isomorphic to
$(\rho_k,\Zh^+ \otimes \BB C^{k \times k})$
in the decomposition (\ref{tensor-decomp}), then
$$
L^{(n)}(x) = \mu^{(n)}_k x, \qquad \text{where} \qquad
\mu^{(n)}_k = \sum_{p=0}^{k-1} (-1)^{k+p+1} \cdot a^{k-1}(n,p) \cdot
\begin{pmatrix} k-1 \\ p \end{pmatrix}.
$$
\end{thm}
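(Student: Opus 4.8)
The plan is to combine Schur's lemma with an explicit polynomial computation on generators. By the Proposition preceding the statement, $L^{(n)}$ is a $\mathfrak{gl}(2,\HC)$-equivariant endomorphism of $(\pi^0_l,{\cal H}^+)\otimes(\pi^0_r,{\cal H}^+)$; by Theorem \ref{JV-thm} the summands of (\ref{tensor-decomp}) are irreducible and pairwise non-isomorphic, so Schur's lemma forces $L^{(n)}$ to act on $(\rho_k,\Zh^+\otimes\BB C^{k\times k})$ by a scalar $\mu^{(n)}_k$. Since that summand is the $\mathfrak{gl}(2,\HC)$-submodule generated by $(z_{ij}-z'_{ij})^{k-1}$, it lies inside the $\rho_k$-isotypic subspace, so it suffices to evaluate $L^{(n)}$ on the single vector $g_k=(z_{11}-z'_{11})^{k-1}\in{\cal H}^+\otimes{\cal H}^+$ and to read off $\mu^{(n)}_k$ as the coefficient of $(z_{11})^{k-1}\otimes 1$ in $L^{(n)}(g_k)$ (that monomial occurs with coefficient $1$ in $g_k$). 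The assertions that $L^{(n)}$ maps ${\cal H}^+\otimes{\cal H}^+$ into itself and is equivariant are exactly the content of that preliminary Proposition, so the whole theorem reduces to a polynomial calculation.

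First I would pin down $L^{(1)}$. Using the inverse-isomorphism formula (\ref{inverse-iso}) of Proposition \ref{quotient-prop} (equivalently the Poisson formula, Theorem \ref{Poisson}), the inner integrals over $Z_1,Z_2$ in the definition of $L^{(1)}$ collapse, $\frac i{2\pi^3}\int_{Z_i\in U(2)_{R_i}}\frac{(\degt\phi_i)(Z_i)}{N(Z_i-T)}\frac{dV_i}{N(Z_i)}=\phi_i(T)$ for $\phi_i\in{\cal H}^+$, so that
$$
L^{(1)}(\phi_1\otimes\phi_2)(W_1,W_2)=\frac i{2\pi^3}\int_{T\in U(2)_r}\frac{\phi_1(T)\,\phi_2(T)}{N(T-W_1)\,N(T-W_2)}\,dV_T=I_r(\phi_1\cdot\phi_2),
$$
where $\phi_1\cdot\phi_2$ is the pointwise product (a single-variable polynomial in $\Zh^+$) and $I_r$ is the map of Section \ref{results-summary}. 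By Theorem 12 of \cite{FL3} this equals $I(\phi_1\cdot\phi_2)$, hence $L^{(1)}=I\circ m$, where $m\colon\phi_1\otimes\phi_2\mapsto\phi_1\cdot\phi_2$ is the equivariant multiplication map onto $(\rho_1,\Zh^+)$; since $m$ kills every $(\rho_k,\cdot)$ with $k\ge 2$ and restricts to $I^{-1}$ on $(\rho_1,\Zh^+)$, the operator $L^{(1)}$ is precisely the projection onto the first summand of (\ref{tensor-decomp}). By Lemma \ref{z^p} this yields the base case
$$
L^{(1)}\bigl((z_{11})^p\otimes(z'_{11})^q\bigr)=\frac1{p+q+1}\sum_{s=0}^{p+q}(z_{11})^s\otimes(z'_{11})^{p+q-s},
$$
which matches the $n=1$ instance of the claimed formula.

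Next I would set up the slingshot recursion. Fix $d\ge0$ and put $V_d=\Span\{(z_{11})^s\otimes(z'_{11})^{d-s}:0\le s\le d\}$. A short weight computation shows that $V_d$ is exactly one weight space of the diagonal torus of the maximal compact $U(2)\times U(2)$, so equivariance forces $L^{(n)}$ to preserve $V_d$; write $L^{(n)}\bigl((z_{11})^p\otimes(z'_{11})^{d-p}\bigr)=\sum_{s=0}^d c^{(n)}(d;p,s)\,(z_{11})^s\otimes(z'_{11})^{d-s}$, with $c^{(1)}(d;p,s)=\frac1{d+1}$. When an $n$-loop box diagram is obtained from an $(n-1)$-loop one by attaching a slingshot at one of the four external vertices, I would substitute the recursive description of the kernel $l^{(n)}$ into the definition of $L^{(n)}$ and integrate out the external variables $Z_1,Z_2$ and the new internal variable $T_n$ over the cycles $U(2)_{r_i}$ — taken in the prescribed relative position (\ref{r_max})--(\ref{r_min}) — using the Poisson formula together with Lemma \ref{z^p} and Corollary \ref{z^p-cor}. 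This produces a recursion relating the coefficients $c^{(n)}(d;\cdot,\cdot)$ to those at level $n-1$; tracking the coefficients $a^d(n,p):=c^{(n)}(d;p,d)$ that survive in the extraction of $\mu^{(n)}_k$, the recursion comes out to be exactly (\ref{a-ini})--(\ref{a-rec-rel}), and — this is the ``magic'' — it is the same for each of the four possible attachment points, so $a^d(n,p)$ does not depend on which box diagram was chosen. (For the ladder diagrams this step can be arranged without the four-fold case distinction, which is why that case is treated first.) Finally, expanding $L^{(n)}(g_k)=\sum_{p=0}^{k-1}\binom{k-1}{p}(-1)^{k-1-p}L^{(n)}\bigl((z_{11})^p\otimes(z'_{11})^{k-1-p}\bigr)$ with $d=k-1$ and extracting the coefficient of $(z_{11})^{k-1}\otimes 1$ gives $\mu^{(n)}_k=\sum_{p=0}^{k-1}\binom{k-1}{p}(-1)^{k-1-p}a^{k-1}(n,p)$, which is the asserted expression since $(-1)^{k-1-p}=(-1)^{k+p+1}$.

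I expect the main obstacle to be the slingshot step: carrying out the internal integrations so as to show that the resulting recursion for the coefficients $c^{(n)}(d;p,s)$ is independent of which of the four vertices receives the slingshot, while keeping careful track of which cycle $U(2)_{r_i}$ each variable is integrated over and of which domain $\BB D^{\pm}_r$ each external variable lies in. This is precisely where the Minkowski case, unlike the Euclidean one, is delicate: a ``wrong'' relative position of the cycles makes these integrals vanish, so the bookkeeping of (\ref{r_max})--(\ref{r_min}) is an essential ingredient rather than a technicality.
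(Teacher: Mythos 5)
Your overall architecture (Schur's lemma plus evaluation on generators plus a slingshot recursion) is the right one, and your identification of $L^{(1)}$ as $I\circ m$, hence as the projection onto the $(\rho_1,\Zh^+)$ summand, is correct. But there are two genuine gaps. The first is circularity: you take the harmonicity/polynomiality of $L^{(n)}(\phi_1\otimes\phi_2)$ and the $\mathfrak{gl}(2,\HC)$-equivariance of $L^{(n)}$ from the ``preliminary Proposition,'' but that Proposition is announced as a \emph{preliminary version of the main result} -- it is part of what Theorem \ref{main-thm} asserts and is nowhere proved independently. Everything downstream in your argument (Schur's lemma, the preservation of the weight spaces $V_d$, the reduction to generators) rests on it. The paper spends most of its effort on exactly this point: it introduces the auxiliary operator $\mathring{L}^{(n)}$ on $(\varpi_2^*,\Zh)\otimes{\cal H}^+$ (whose equivariance does follow from the invariance of the pairings and Lemma \ref{conformal}), proves that it annihilates $I_2^-\otimes{\cal H}^+$ and $\mathfrak{V}\cap(\Zh^+\otimes{\cal H}^+)$, and only then, via the quotient identification $\Zh/(I_2^-\oplus\Zh^+)\simeq{\cal H}^+$ of Proposition \ref{quotient-prop} together with the explicit evaluation on generators, deduces that $L^{(n)}$ lands in ${\cal H}^+\otimes{\cal H}^+$ and is equivariant. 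None of this appears in your proposal.

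The second gap is the slingshot step, which you describe but do not carry out. As set up, your recursion requires knowing $L^{(n)}$ on \emph{all} monomials $(z_{11})^p\otimes(z'_{11})^{d-p}$; but the explicit integrations only trivialize when one tensor factor is the constant function (then the corresponding external integral (\ref{Z_1-integral}) equals $1$ by the orthogonality relations (\ref{orthogonality}) and the diagram reduces). For $0<p<d$ both external integrals produce nontrivial functions of several internal vertices and the computation does not reduce to iterating Lemma \ref{z^p}. The paper avoids this by evaluating only on the extreme generators $1\otimes(z'_{11})^k$ (ladder case) and $(z_{11})^k\otimes 1$ (general case), and the independence of the attachment point is not automatic from the shape of the recursion: it is proved by showing $L^{(n)}((z_{11})^k\otimes 1)=\frac1{k+1}\sum_{p}(w'_{11})^{k-p}L^{(n-1)}((z_{11})^p\otimes 1)$ via the matrix-coefficient expansion of $l^{(n-1)}$ and Corollary \ref{z^p-cor}, and then invoking the induction hypothesis (the magic identity at level $n-1$), Corollary \ref{ladder-recursive-cor} and the symmetry $\tilde L^{(n)}(\phi_1\otimes\phi_2)(W_1,W_2)=\tilde L^{(n)}(\phi_2\otimes\phi_1)(W_2,W_1)$. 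Your coefficients $a^d(n,p):=c^{(n)}(d;p,d)$ are also defined differently from the paper's $a^k(n,p)$ (which are the coefficients of $L^{(n)}(1\otimes(z'_{11})^k)$ in the basis $(w_{11})^{k-p}(w'_{11})^p$), and the claim that they satisfy the same recursion (\ref{a-ini})--(\ref{a-rec-rel}) is asserted rather than verified.
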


In particular, we obtain the magic identities for operators $L^{(n)}$:

\begin{cor}  \label{magic-id-operators}
Let $L^{(n)}$ and $\tilde L^{(n)}$ be two integral operators
corresponding to any two $n$-loop box diagrams, then
$L^{(n)} = \tilde L^{(n)}$, as operators on ${\cal H}^+ \otimes {\cal H}^+$.
\end{cor}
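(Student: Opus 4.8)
The plan is to obtain this as a direct consequence of Theorem \ref{main-thm} together with the multiplicity-one property of the decomposition (\ref{tensor-decomp}); no new computation is needed. First I would recall precisely what Theorem \ref{main-thm} gives: \emph{every} operator $L^{(n)}$ arising from an $n$-loop box diagram — irrespective of the order and the positions in which its $n-2$ slingshots were attached — acts on the irreducible summand $(\rho_k,\Zh^+\otimes\BB C^{k\times k})$ of ${\cal H}^+\otimes{\cal H}^+$ by the single scalar
$$
\mu^{(n)}_k = \sum_{p=0}^{k-1} (-1)^{k+p+1}\, a^{k-1}(n,p)\, \binom{k-1}{p}.
$$
The point I would stress is that the numbers $a^{k-1}(n,p)$ are pinned down once and for all by the recursion (\ref{a-ini})--(\ref{a-rec-rel}), whose right-hand sides involve only the integers $n$, $k$, $p$ and record nothing about the combinatorial shape of the diagram. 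Hence $\mu^{(n)}_k$ is literally the same number for $L^{(n)}$ and for $\tilde L^{(n)}$, for every $k=1,2,3,\dots$: if $x\in{\cal H}^+\otimes{\cal H}^+$ lies in a copy of $(\rho_k,\Zh^+\otimes\BB C^{k\times k})$, then $L^{(n)}x=\mu^{(n)}_k x=\tilde L^{(n)}x$.

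Next I would upgrade this ``same eigenvalues'' statement to the operator equality. By Theorem \ref{JV-thm} the summands $(\rho_k,\Zh^+\otimes\BB C^{k\times k})$ in (\ref{tensor-decomp}) are irreducible and pairwise non-isomorphic, so the decomposition is multiplicity-free and ${\cal H}^+\otimes{\cal H}^+$ is the algebraic direct sum of them; in particular every vector is a finite sum of vectors each lying in a single summand. On each such vector $L^{(n)}$ and $\tilde L^{(n)}$ agree by the previous paragraph, so by linearity they agree on all of ${\cal H}^+\otimes{\cal H}^+$. Equivalently, $L^{(n)}-\tilde L^{(n)}$ is a $\mathfrak{gl}(2,\HC)$-equivariant endomorphism of ${\cal H}^+\otimes{\cal H}^+$ (the equivariance of each $L^{(n)}$ being part of Theorem \ref{main-thm}) that annihilates a cyclic generator — e.g. $1\otimes1$ for $k=1$ and $(z_{ij}-z'_{ij})^{k-1}$ for $k\ge2$ — of every irreducible summand, hence vanishes. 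This completes the proof of the corollary.

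It is worth noting where the actual difficulty lies, since there is essentially none in this corollary: the whole force of the magic identities has already been absorbed into Theorem \ref{main-thm}, and in particular into the fact that attaching a slingshot at \emph{any} of the four vertices $Z_1,Z_2,W_1,W_2$ of an $(n-1)$-loop diagram produces the \emph{same} recursion (\ref{a-rec-rel}). That attachment-independence — which one establishes by evaluating the operators on the generators $1\otimes1$ and $(z_{ij}-z'_{ij})^{k-1}$ with the cycles prescribed by (\ref{r_max})--(\ref{r_min}) and using Lemma \ref{z^p} and Corollary \ref{z^p-cor} — is the real content, and is the step I would expect to be the main obstacle were one proving Theorem \ref{main-thm} from scratch. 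Finally, I would remark (but do not use here) that, once one knows each kernel $l^{(n)}$ is harmonic in each of its variables, the operator identity $L^{(n)}=\tilde L^{(n)}$ yields the original ``magic identities'' of Theorem \ref{magic}.
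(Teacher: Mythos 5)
Your proposal is correct and matches the paper's route exactly: the paper presents this corollary as an immediate consequence of Theorem \ref{main-thm}, since the scalars $\mu^{(n)}_k$ depend only on $n$ and $k$ and the decomposition (\ref{tensor-decomp}) is multiplicity-free, which is precisely the argument you give.
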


\begin{rem}
If one can prove that each $n$-loop box integral $l^{(n)}(Z_1,Z_2;W_1,W_2)$ is
harmonic in each variable $Z_1$, $Z_2$, $W_1$ and $W_2$,
then it is easy to show that Theorem \ref{main-thm}
implies the magic identities, as stated in Theorem \ref{magic}.
\end{rem}

\subsection{Example: The Case $n=2$}

In this subsection we compute the coefficients $\mu^{(2)}_k$ and show that
Theorem 26 in \cite{L} is a special case of Theorem \ref{main-thm} when $n=2$.
We have:
$$
a^k(2,p) = \sum_{q=p}^k \frac1{q+1} \cdot a^k(1,q)
= \frac1{k+1} \sum_{q=p}^k \frac1{q+1}
$$
and
\begin{multline*}
\mu^{(2)}_k = \sum_{p=0}^{k-1} (-1)^{k+p+1} \cdot a^{k-1}(2,p) \cdot
\begin{pmatrix} k-1 \\ p \end{pmatrix}  \\
= \frac1k \sum_{p=0}^{k-1} (-1)^{k+p+1} \cdot
\begin{pmatrix} k-1 \\ p \end{pmatrix} \sum_{q=p}^{k-1} \frac1{q+1}
= \frac{(-1)^{k+1}}k \sum_{q=0}^{k-1} \frac1{q+1} \sum_{p=0}^q (-1)^p
\begin{pmatrix} k-1 \\ p \end{pmatrix}.
\end{multline*}
If $k=1$, we obtain $\mu^{(2)}_1 =1$. So, assume $k \ge 2$.
Using an identity
$$
\sum_{p=0}^q (-1)^p \begin{pmatrix} k \\ p \end{pmatrix}
= (-1)^q \begin{pmatrix} k-1 \\ q \end{pmatrix}
$$
which can be easily proved by induction (see formula 0.15(4) in \cite{GR}),
we obtain
$$
\mu^{(2)}_k = \frac{(-1)^{k+1}}k \sum_{q=0}^{k-1} \frac{(-1)^q}{q+1}
\begin{pmatrix} k-2 \\ q \end{pmatrix}  \\
= \frac{(-1)^{k+1}}{k(k-1)} \sum_{q=0}^{k-1} (-1)^q \cdot
\begin{pmatrix} k-1 \\ q+1 \end{pmatrix}
= \frac{(-1)^{k+1}}{k(k-1)}.
$$
This shows that
$$
\mu^{(2)}_k =
\begin{cases}
1 & \text{if $k=1$;} \\
\frac{(-1)^{k+1}}{k(k-1)} & \text{if $k \ge 2$;}
\end{cases}
$$
and that Theorem 26 in \cite{L} is a special case of Theorem \ref{main-thm}.

\section{Proof of Theorem \ref{main-thm}}  \label{proof-section}

\subsection{Preliminary Lemmas}

In this subsection we prove two lemmas that are part of our
proof of Theorem \ref{main-thm}.
The first lemma describes an important conformal property of
four-point box integrals.

\begin{lem}  \label{conformal}
For each $h = \bigl(\begin{smallmatrix} a' & b' \\
c' & d' \end{smallmatrix}\bigr) \in GL(2,\HC)$
sufficiently close to the identity, we have:
\begin{multline*}
l^{(n)}(\tilde Z_1, \tilde Z_2; \tilde W_1, \tilde W_2) \\
= N(a'-Z_1c') \cdot N(cZ_2+d) \cdot N(cW_1+d) \cdot N(a'-W_2c') \cdot
l^{(n)}(Z_1,Z_2;W_1,W_2),
\end{multline*}
where $h^{-1} = \bigl(\begin{smallmatrix} a & b \\
c & d \end{smallmatrix}\bigr)$,
$\tilde Z_i = (aZ_i+b)(cZ_i+d)^{-1}$ and $\tilde W_i = (aW_i+b)(cW_i+d)^{-1}$,
$i=1,2$.
\end{lem}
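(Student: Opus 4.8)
The plan is to establish the conformal covariance by induction on the number of loops $n$, tracking how each factor $N(\,\cdot\,)$ transforms when the conformal transformation $h$ acts on all variables simultaneously. The key elementary fact I would use is the multiplicative cocycle identity for the conformal action: if $\tilde Y = (aY+b)(cY+d)^{-1}$ and $\tilde X = (aX+b)(cX+d)^{-1}$, then
$$
N(\tilde X - \tilde Y) = \frac{N(X-Y)}{N(cX+d) \cdot N(cY+d)},
$$
and, using $h^{-1} = \bigl(\begin{smallmatrix} a & b \\ c & d \end{smallmatrix}\bigr)$, also the dual version $N(\tilde X - \tilde Y) = N(X-Y) / \bigl(N(a'-Xc') \cdot N(a'-Yc')\bigr)$ where the equality $N(cX+d) = N(a'-\tilde X c')^{-1}$ relates the two cocycles. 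I would also need the transformation of the holomorphic volume form: under $T \mapsto \tilde T = (aT+b)(cT+d)^{-1}$ one has $dV_{\tilde T} = N(cT+d)^{-2} \, dV_T$, together with the invariance of the cycles $U(2)_r$ under $U(2,2)_r$ established in Subsection 2.6, so that substituting $T = \tilde T(h^{-1})$ in the integral merely relabels the contour (for $h$ close to the identity the perturbed cycles stay in admissible relative position, which is where (\ref{r_max})–(\ref{r_min}) and the genericity of the radii $r_k$ are used).

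The induction base is $n=1$: for the one-loop box integral $l^{(1)}$, substitute $T \mapsto \tilde T$ in the defining integral, apply the cocycle identity to each of the four denominator factors $N(\tilde Z_i - \tilde T)$, $N(\tilde W_i - \tilde T)$, and use the Jacobian $dV_{\tilde T} = N(cT+d)^{-2}\,dV_T$. The four factors $N(cT+d)$ coming from the $Z_1,Z_2,W_1,W_2$-denominators cancel against $N(cT+d)^{-2}$ from the Jacobian... but I should be careful: two of the four external points are $Z_i$ (which should produce $N(a'-Z_ic')$) and two are $W_i$ (which should produce $N(cW_i+d)$), so the cocycle must be applied in its "primed" form to the $Z$-factors and in its "unprimed" form to the $W$-factors. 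The net leftover is exactly $N(a'-Z_1c')N(cZ_2+d)N(cW_1+d)N(a'-W_2c')$ times $l^{(1)}$ evaluated at the untransformed points, which is the claimed identity for $n=1$.

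For the inductive step, recall that an $n$-loop diagram is obtained from an $(n-1)$-loop one by attaching a slingshot: one of $Z_1,Z_2,W_1,W_2$ becomes an integration vertex $T_n$, and a new external vertex (with the same label) is attached at the tip of the handle, contributing to the integrand one new propagator $N(\text{new external} - T_n)^{-1}$, plus two propagators joining $T_n$ to its two neighbors among the old externals, and one propagator $N(T_n - \text{neighbor})$ from the dashed "string" edge of the slingshot — and crucially a compensating numerator factor (as in the explicit $l^{(2)}$ formula, the factor $N(Z_1-W_1)$ is there precisely to fix the conformal weight). I would feed the inductive hypothesis for $l^{(n-1)}$ into the slingshot integral, change variables $T_n \mapsto \tilde T_n$, and check that the cocycle factors produced by the new propagators, the new numerator factor, and the Jacobian $dV_{\tilde T_n} = N(cT_n+d)^{-2}\,dV_{T_n}$ all conspire so that: (i) everything depending on the now-integrated vertex $T_n$ cancels, and (ii) the external-weight factors of $l^{(n-1)}$ get correctly modified — the factor attached to the vertex that got "promoted" to $T_n$ disappears, and a new factor of the correct type ($N(a'-Z_ic')$ or $N(cW_ic+d)$) appears for the freshly attached external vertex of the same label. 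This bookkeeping has to be done for each of the four attachment cases listed before (\ref{r_max}); by the symmetry of the roles of $Z_1,Z_2$ and of $W_1,W_2$ it suffices to treat two of them.

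\textbf{Main obstacle.} The genuine difficulty is not the cocycle algebra — that is routine — but keeping the contours honest: after the substitution $T \mapsto \tilde T$ the cycle $U(2)_{r_k}$ becomes $h(U(2)_{r_k})$, which is a cycle in $\HC$ homologous to some $U(2)_{r_k'}$ but not literally equal to it, and one must argue (for $h$ sufficiently close to the identity, hence the hypothesis in the statement) that this perturbation does not cross any of the singularities of the integrand, so that the integral is unchanged. This is exactly the subtlety the introduction emphasizes about the Minkowski case, and it is where the strict inequalities among $r_1,\dots,r_n$ and the open-domain conditions $Z_i \in \BB D^-_{r_{\max,i}}$, $W_i \in \BB D^+_{r_{\min,i}}$ are essential: they give a positive margin that absorbs a small deformation. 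I would make this precise using the description of the domains $\BB D^\pm_R$ and the $U(2,2)_R$-invariance from Subsection 2.6, noting that a near-identity $h$ moves each $U(2)_{r_k}$ within the open annular region between consecutive singularity loci.
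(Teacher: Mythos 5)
Your proposal follows essentially the same route as the paper: induction on the number of loops, the cocycle identity for $N(\tilde X-\tilde Y)$ (Lemma 10 in \cite{FL1}), the transformation law for $dV$ (Lemma 61 in \cite{FL1}), and a contour-deformation argument for $h$ near the identity (the paper phrases this as the integrand being a closed differential form, and cites the base cases $n=1,2$ from Lemma 14 in \cite{L} rather than reproving them). One small correction to your bookkeeping: the Jacobian is $dV_{\tilde T}=N(cT+d)^{-2}\,N(a'-Tc')^{-2}\,dV_T$, not $N(cT+d)^{-2}\,dV_T$, and correspondingly exactly two of the four propagators must be expanded in the form contributing $N(cT+d)$ and two in the form contributing $N(a'-Tc')$ (which is what produces the particular distribution of primed and unprimed external weights appearing in the statement); with that adjustment the cancellation closes exactly as you describe.
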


\begin{proof}
The proof is by induction on $n$;
for $n=1$ and $n=2$ this is Lemma 14 in \cite{L}.
For concreteness, let us assume that the last slingshot is attached to an
$(n-1)$-loop box diagram $d^{(n-1)}$ so that $Z_1$ becomes a solid vertex and
gets relabeled as $T_n$ (the other cases are similar). Then
$$
l^{(n)}(Z_1,Z_2;W_1,W_2) = \frac{i}{2\pi^3} \int_{T_n \in U(2)_{r_n}}
\frac{N(Z_2-W_2) \cdot l^{(n-1)}(T_n,Z_2;W_1,W_2)}
{N(Z_1-T_n) \cdot N(Z_2-T_n) \cdot N(W_2-T_n)} \,dV_{T_n},
$$
where $l^{(n-1)}(Z_1,Z_2;W_1,W_2)$ is the conformal four-point integral
corresponding to the $(n-1)$-loop diagram $d^{(n-1)}$.
By induction, we assume that the result holds for $l^{(n-1)}(Z_1,Z_2;W_1,W_2)$.
Using Lemmas 10, 61 from \cite{FL1} and letting
$\tilde T_n = (aT_n+b)(cT_n+d)^{-1}$, we obtain
\begin{multline*}
l^{(n)}(\tilde Z_1, \tilde Z_2; \tilde W_1, \tilde W_2)
= \frac{i}{2\pi^3} \int_{\tilde T_n \in U(2)_{r_n}}
\frac{N(\tilde Z_2 - \tilde W_2) \cdot
l^{(n-1)}(\tilde T_n,\tilde Z_2;\tilde W_1,\tilde W_2)}
{N(\tilde Z_1 - \tilde T_n) \cdot N(\tilde Z_2 - \tilde T_n)
\cdot N(\tilde W_2 -\tilde T_n)} \,dV_{\tilde T_n} \\
= N(a'-Z_1c') \cdot N(cZ_2+d) \cdot N(cW_1+d) \cdot N(a'-W_2c') \\
\times \frac{i}{2\pi^3} \int_{\tilde T_n \in U(2)_{r_n}}
\frac{N(Z_2-W_2) \cdot N(cT_n+d)^2 \cdot N(a'-T_nc')^2}
{N(Z_1-T_n) \cdot N(Z_2-T_n) \cdot N(W_2-T_n)}
\cdot l^{(n-1)}(T_n,Z_2;W_1,W_2) \,dV_{\tilde T_n}  \\
= N(a'-Z_1c') \cdot N(cZ_2+d) \cdot N(cW_1+d) \cdot N(a'-W_2c') \cdot
l^{(n)}(Z_1,Z_2;W_1,W_2),
\end{multline*}
where we are allowed to replace integration over $\tilde T_n \in U(2)_{r_n}$
with $T_n \in U(2)_{r_n}$ since the integrand is a closed differential form
and $h$ is sufficiently close to the identity.
\end{proof}

The second lemma concerns a set of generators of
${\cal H}^+ \otimes {\cal H}^+$.

\begin{lem}  \label{product-lemma}
As a representation of $\mathfrak{gl}(2,\HC)$,
$(\pi^0_l \otimes \pi^0_r, {\cal H}^+ \otimes {\cal H}^+)$
is generated by $1 \otimes (z'_{11})^k$, $k = 0, 1, 2, \dots$.
It can also be generated by $(z_{11})^k \otimes 1$, $k = 0, 1, 2, \dots$.
\end{lem}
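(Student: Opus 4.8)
I would prove the sharper statement that the $\mathfrak{gl}(2,\HC)$-submodule $M$ of $(\pi^0_l\otimes\pi^0_r,{\cal H}^+\otimes{\cal H}^+)$ generated by the vectors $1\otimes(z'_{11})^j$, $j\ge 0$, is all of ${\cal H}^+\otimes{\cal H}^+$; the assertion about $(z_{11})^j\otimes 1$ then follows by interchanging the two tensor factors, which is a symmetry of the decomposition (\ref{tensor-decomp}). Two structural facts enter. First, grade ${\cal H}^+$ by polynomial degree, ${\cal H}^+=\bigoplus_{d\ge 0}{\cal H}^+_d$; the piece ${\cal H}^+_d$ is spanned by the functions $t^l_{n\,\underline m}(Z)$ with $2l=d$, it is irreducible as a module over $[\mathfrak k_{\BB C},\mathfrak k_{\BB C}]\cong\mathfrak{sl}(2,\BB C)\oplus\mathfrak{sl}(2,\BB C)$ (where $\mathfrak k_{\BB C}=\mathfrak{gl}(2,\BB C)\oplus\mathfrak{gl}(2,\BB C)$ is the complexified Lie algebra of the maximal compact subgroup $U(2)\times U(2)\subset U(2,2)$), and $(z_{11})^d=t^{d/2}_{-d/2\,\underline{-d/2}}(Z)$ is an extreme weight vector of it by (\ref{t-special}). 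Second, in the Harish--Chandra decomposition $\mathfrak{gl}(2,\HC)=\mathfrak p^-\oplus\mathfrak k_{\BB C}\oplus\mathfrak p^+$ adapted to $\mathfrak u(2,2)$ (with $\mathfrak p^\pm$ abelian), the subspace $\mathfrak p^+$ acts on $({\cal H}^+,\pi^0_l)$ and on $({\cal H}^+,\pi^0_r)$ by operators raising polynomial degree by one, and $\pi^0_l(\mathfrak p^+)\cdot{\cal H}^+_d={\cal H}^+_{d+1}$, likewise for $\pi^0_r$. Finally, on the tensor product the Lie algebra acts diagonally, $X\cdot(\phi\otimes\psi)=(\pi^0_l(X)\phi)\otimes\psi+\phi\otimes(\pi^0_r(X)\psi)$.

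\emph{Step 1: $1\otimes{\cal H}^+\subseteq M$.} The constant $1\in{\cal H}^+$ spans the trivial $\mathfrak k_{\BB C}$-type, hence is annihilated by $[\mathfrak k_{\BB C},\mathfrak k_{\BB C}]$; so for $K\in[\mathfrak k_{\BB C},\mathfrak k_{\BB C}]$ the diagonal action gives $K\cdot\bigl(1\otimes(z'_{11})^j\bigr)=1\otimes\bigl(\pi^0_r(K)(z'_{11})^j\bigr)$. Iterating, $M$ contains $1\otimes U$, where $U$ is the $[\mathfrak k_{\BB C},\mathfrak k_{\BB C}]$-submodule of ${\cal H}^+$ generated by $(z'_{11})^j$; since $(z'_{11})^j$ is an extreme weight vector of the irreducible ${\cal H}^+_j$, we get $U={\cal H}^+_j$. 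Letting $j$ vary, $1\otimes{\cal H}^+=\bigoplus_{j\ge 0}\bigl(1\otimes{\cal H}^+_j\bigr)\subseteq M$.

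\emph{Step 2: ${\cal H}^+_d\otimes{\cal H}^+\subseteq M$ for every $d$, by induction on $d$.} The base case $d=0$ is Step 1. Assume the claim for $d-1$. Given $\phi\in{\cal H}^+_d$, pick $P^+_1,\dots,P^+_r\in\mathfrak p^+$ and $\chi_1,\dots,\chi_r\in{\cal H}^+_{d-1}$ with $\phi=\sum_\alpha\pi^0_l(P^+_\alpha)\chi_\alpha$, possible by the surjectivity above. For any $\psi\in{\cal H}^+$ the diagonal action yields
$$
\sum_\alpha P^+_\alpha\cdot(\chi_\alpha\otimes\psi)
=\phi\otimes\psi+\sum_\alpha\chi_\alpha\otimes\bigl(\pi^0_r(P^+_\alpha)\psi\bigr).
$$
The left-hand side lies in $M$, since each $\chi_\alpha\otimes\psi\in{\cal H}^+_{d-1}\otimes{\cal H}^+\subseteq M$; the last sum on the right also lies in ${\cal H}^+_{d-1}\otimes{\cal H}^+\subseteq M$. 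Hence $\phi\otimes\psi\in M$, and as $\phi\in{\cal H}^+_d$ and $\psi\in{\cal H}^+$ are arbitrary, ${\cal H}^+_d\otimes{\cal H}^+\subseteq M$. Summing over $d$ gives $M\supseteq\bigoplus_d{\cal H}^+_d\otimes{\cal H}^+={\cal H}^+\otimes{\cal H}^+$, proving the lemma.

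The only non-formal point is the surjectivity $\pi^0_l(\mathfrak p^+)\cdot{\cal H}^+_d={\cal H}^+_{d+1}$ (and the same for $\pi^0_r$), so this is the step I expect to require care. Here one invokes the explicit form of the Lie algebra actions of Subsection 3.2 of \cite{FL2}: the ``translation'' part of the conformal algebra lowers degree while $\mathfrak p^+$ raises it by exactly one, so $\bigoplus_{e\le d}{\cal H}^+_e$ would be a proper $\mathfrak{gl}(2,\HC)$-submodule of ${\cal H}^+$ were $\mathfrak p^+\cdot{\cal H}^+_d$ to stay inside it, contradicting the irreducibility of $({\cal H}^+,\pi^0_l)$; and $\mathfrak p^+\cdot{\cal H}^+_d$, being a nonzero $\mathfrak k_{\BB C}$-submodule of the $\mathfrak k_{\BB C}$-irreducible ${\cal H}^+_{d+1}$, must be all of it. Alternatively one may argue entirely representation-theoretically: since (\ref{tensor-decomp}) is multiplicity free with pairwise non-isomorphic summands, $M$ is automatically a partial direct sum of the $(\rho_k,\Zh^+\otimes\BB C^{k\times k})$, so it suffices to show that $1\otimes(z'_{11})^{k-1}$ has nonzero component in the $k$-th summand for every $k$; this can be checked on a single $U(2)\times U(2)$-weight space, on which the Casimir operator becomes a tridiagonal (Jacobi) matrix having $1\otimes(z'_{11})^{k-1}$ as a cyclic vector.
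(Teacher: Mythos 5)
Your argument is correct, but it takes a genuinely different route from the paper's. The paper's proof is a three-line orthogonality computation: since the decomposition (\ref{tensor-decomp}) is multiplicity-free and its $k$-th summand is generated by $(z_{11}-z'_{11})^{k-1}$, it suffices to check that
$\bigl\langle 1\otimes(z'_{11})^{k-1},\,(z_{11}-z'_{11})^{k-1}\bigr\rangle_{inn.\:prod.}=(-1)^{k-1}\ne 0$,
which follows at once from (\ref{t-special}) and (\ref{H-unitary-orthogonality}); hence the submodule generated by $1\otimes(z'_{11})^{k-1}$ has nonzero projection onto, and therefore contains, the $k$-th irreducible component, and letting $k$ vary exhausts everything. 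This is essentially the first half of the ``alternative'' you sketch at the end, except that pairing against the known generator replaces your Casimir/Jacobi-matrix argument and is much cheaper. Your main argument, by contrast, never uses (\ref{tensor-decomp}) or its multiplicity-freeness: it is a direct generation argument from the Harish--Chandra decomposition, relying only on the irreducibility of $(\pi^0_l,{\cal H}^+)$, the $K$-type structure ${\cal H}^+_d\simeq V_{d/2}\boxtimes V_{d/2}$, and the fact that $\mathfrak{p}^+$ raises degree by exactly one. That buys robustness (it would survive in a non-multiplicity-free tensor product) at the cost of length and of the surjectivity $\pi^0_l(\mathfrak{p}^+)\cdot{\cal H}^+_d={\cal H}^+_{d+1}$, which you rightly flag as the one non-formal step and justify adequately. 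One small point: reducing the second statement to the first by ``interchanging the tensor factors'' requires noting that $\pi^0_l$ and $\pi^0_r$ agree on $\mathfrak{sl}(2,\HC)$ and differ only by global scalars on the centre, so the swap genuinely carries submodules to submodules; alternatively, just rerun either argument with the roles of the two factors exchanged, which is what the paper's method does with the nonzero pairing $\bigl\langle (z_{11})^{k-1}\otimes 1,\,(z_{11}-z'_{11})^{k-1}\bigr\rangle_{inn.\:prod.}=1$.
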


\begin{proof}
Recall that $(z_{11}-z'_{11})^k$ generates the irreducible component
$(\rho_{k+1},\Zh^+\otimes \BB C^{(k+1) \times (k+1)})$ of
${\cal H}^+ \otimes {\cal H}^+$.
We compute the inner product in ${\cal H}^+ \otimes {\cal H}^+$ induced
by (\ref{inner-prod}):
$$
\bigl\langle 1 \otimes (z'_{11})^k, (z_{11}-z'_{11})^k \bigr\rangle_{inn.\: prod.}
= (-1)^k \bigl\langle 1 \otimes (z'_{11})^k, 1 \otimes (z'_{11})^k
\bigr\rangle_{inn.\: prod.} = (-1)^k,
$$
by (\ref{t-special}) and (\ref{H-unitary-orthogonality}).
Since this product is not zero, it follows that the subrepresentation of 
${\cal H}^+ \otimes {\cal H}^+$ generated by $1 \otimes (z'_{11})^k$
contains $(\rho_{k+1},\Zh^+\otimes \BB C^{(k+1) \times (k+1)})$.
Therefore, each irreducible component of ${\cal H}^+ \otimes {\cal H}^+$
is contained in the subrepresentation of ${\cal H}^+ \otimes {\cal H}^+$
generated by $1 \otimes (z'_{11})^k$, $k=0,1,2,\dots$.
\end{proof}

\subsection{The Case of Ladder Diagrams}

In this subsection we prove Theorem \ref{main-thm} in the special case of
ladder diagrams. We label the variables as in Figure \ref{ladder-var-label}.
Since the function $l^{(n)}(Z_1,Z_2;W_1,W_2)$ is harmonic in $Z_2$,
the pairings (\ref{H-pairing}) and (\ref{H-pairing2}) agree,
and we can rewrite $L^{(n)}$ as
\begin{multline*}
L^{(n)} (\phi_1 \otimes \phi_2)(W_1,W_2) \\
= \frac{i}{4\pi^5} \iint_{\genfrac{}{}{0pt}{}{Z_1 \in U(2)_{R_1}}{Z_2 \in S^3_{R_2}}}
l^{(n)}(Z_1,Z_2;W_1,W_2) \cdot (\degt_{Z_1} \phi_1)(Z_1)
\cdot (\degt_{Z_2} \phi_2)(Z_2) \,\frac{dV_1}{N(Z_1)} \frac{dS_2}{R_2},
\end{multline*}
where $\phi_1, \phi_2 \in {\cal H}^+$, $R_1 > r_{\text{max},1}$,
$R_2 > r_{\text{max},2}$, $W_1 \in \BB D^+_{r_{\text{min},1}}$,
$W_2 \in \BB D^+_{r_{\text{min},2}}$, as before.

\begin{figure}
\begin{center}
\centerline{\includegraphics[scale=1.5]{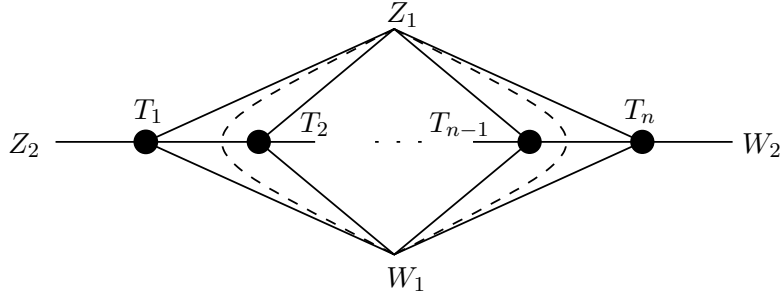}}
\vskip-36mm \hskip2mm $Z_1$

\vskip8mm $T_1$ \hskip60mm $T_n$

\vskip-3mm $T_2$ \hskip12mm $T_{n-1}$

\vskip-2mm $Z_2$ \hskip92mm $W_2$

\vskip13mm \hskip3mm $W_1$
\end{center}
\caption{$n$-loop ladder diagram (coordinate space variable).}
\label{ladder-var-label}
\end{figure}


\begin{lem}  \label{gen-val-lemma}
The operator $L^{(n)}$ associated to the $n$-loop ladder diagram sends
each $1 \otimes (z'_{11})^k$, $k=0,1,2,\dots$, into
\begin{equation}  \label{a-def}
\sum_{p=0}^k a^k(n,p) \cdot (w_{11})^{k-p} \cdot (w'_{11})^p,
\end{equation}
where the coefficients $a^k(n,p)$, $0 \le p \le k$, can be computed
from the recursive relations (\ref{a-ini}) and (\ref{a-rec-rel}).
In particular, $L^{(n)} (1 \otimes (z'_{11})^k)$ lies in
${\cal H}^+ \otimes {\cal H}^+$.
\end{lem}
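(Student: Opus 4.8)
The plan is to prove Lemma~\ref{gen-val-lemma} by induction on the number of loops $n$, realizing the $n$-loop ladder so that its last slingshot is attached at the vertex $Z_1$ — the case treated in the proof of Lemma~\ref{conformal}. With this realization $Z_1$ is attached to a single solid vertex at every stage, so in $l^{(n)}(Z_1,Z_2;W_1,W_2)$ the variable $Z_1$ occurs only through the factor $N(Z_1-T_n)^{-1}$, and pairing against the constant $1$ in the $Z_1$-slot will collapse cleanly.

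For the base case $n=1$ I would substitute the defining integral for $l^{(1)}$ into the sphere form of $L^{(1)}$ displayed above, interchange the order of integration (legitimate because $r<R_1$ and $r<R_2$), and evaluate the three integrations in turn. The $Z_1$-integration $\frac{i}{2\pi^3}\int_{Z_1\in U(2)_{R_1}}\frac{dV_1}{N(Z_1-T)N(Z_1)}$ equals $1$: since $T\in U(2)_r\subset\BB D^+_{R_1}$ one deforms $T$ to the origin (no singularity of the closed holomorphic $4$-form crosses $U(2)_{R_1}$, as $\|sT\|<R_1$ for $s\in[0,1]$), reducing it to $\frac{i}{2\pi^3}\int_{U(2)_{R_1}}N(Z_1)^{-2}\,dV_1=1$. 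The $Z_2$-integration $\frac1{2\pi^2}\int_{Z_2\in S^3_{R_2}}\frac{\degt_{Z_2}((Z_2)_{11}^k)}{N(Z_2-T)}\frac{dS_2}{R_2}$ equals $(T)_{11}^k$ by the Poisson formula (Theorem~\ref{Poisson}), valid since $(z_{11})^k$ is harmonic without singularities and $T\in\BB D^+_{R_2}$. What survives is $\frac{i}{2\pi^3}\int_{T\in U(2)_r}\frac{(T)_{11}^k\,dV_T}{N(T-W_1)N(T-W_2)}=\bigl(I_r(z_{11})^k\bigr)(W_1,W_2)$; because $W_1,W_2\in\BB D^+_r$, Theorem~12 and Corollary~14 of \cite{FL3} identify this with $\bigl(I(z_{11})^k\bigr)(W_1,W_2)$, which is $\frac1{k+1}\sum_{p=0}^k(w_{11})^{k-p}(w'_{11})^p$ by Lemma~\ref{z^p}. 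This is (\ref{a-def}) with $a^k(1,p)=\frac1{k+1}$, i.e.\ (\ref{a-ini}), and it manifestly lies in ${\cal H}^+\otimes{\cal H}^+$.

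For the inductive step I would insert the recursion from the proof of Lemma~\ref{conformal},
$$
l^{(n)}(Z_1,Z_2;W_1,W_2)=\frac{i}{2\pi^3}\int_{T_n\in U(2)_{r_n}}
\frac{N(Z_2-W_2)\,l^{(n-1)}(T_n,Z_2;W_1,W_2)}{N(Z_1-T_n)\,N(Z_2-T_n)\,N(W_2-T_n)}\,dV_{T_n},
$$
into the sphere form of $L^{(n)}(1\otimes(z'_{11})^k)$, interchange integrations (the radius inequalities (\ref{r_max})--(\ref{r_min}) put all cycles in admissible relative position), and collapse the $Z_1$-integration to $1$ exactly as in the base case. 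This leaves
$$
L^{(n)}(1\otimes(z'_{11})^k)(W_1,W_2)=\frac1{2\pi^2}\int_{Z_2\in S^3_{R_2}}\Psi(Z_2)\,\degt_{Z_2}\!\bigl((Z_2)_{11}^k\bigr)\,\frac{dS_2}{R_2},\qquad
\Psi(Z_2)=\frac{i}{2\pi^3}\int_{T_n}\frac{N(Z_2-W_2)\,l^{(n-1)}(T_n,Z_2;W_1,W_2)}{N(Z_2-T_n)\,N(W_2-T_n)}\,dV_{T_n}.
$$
Because $l^{(n)}$ is harmonic in its second variable, so is the $Z_1$-integrated kernel $\Psi$, hence the outer integration is a pairing of the type (\ref{H-pairing}). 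The crux is to evaluate it: expand $N(Z_2-T_n)^{-1}$ and $N(W_2-T_n)^{-1}$ by (\ref{1/N-expansion}) — valid since $T_n$ lies between the $W_2$- and $Z_2$-cycles — insert the inductive hypothesis for $l^{(n-1)}(T_n,Z_2;W_1,W_2)$ (after collapsing its own $Z_1$-slot, its $Z_2$-dependence is governed by the $a^k(n-1,\cdot)$), and carry out the $Z_2$- and $T_n$-integrations via the orthogonality relations of Corollary~\ref{z^p-cor} (equivalently Lemma~\ref{z^p}). Reading off the coefficient of $(w_{11})^{k-p}(w'_{11})^p$ yields precisely $a^k(n,p)=\sum_{q=p}^k\frac1{q+1}a^k(n-1,q)$, i.e.\ (\ref{a-rec-rel}); the output is a finite sum of such monomials, hence lies in ${\cal H}^+\otimes{\cal H}^+$.

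The hard part will be this last evaluation. Each slingshot contributes a numerator factor $N(Z_2-W_2)$, and the accumulated product is not harmonic in $Z_2$, so the Poisson formula cannot be applied to $l^{(n)}$ directly; the factor must be transported through the matrix-coefficient expansions and paired against $(z_{11})^k$ so that exactly the triangular sum $\sum_{q\ge p}\frac1{q+1}(\cdot)$ is produced. Equally delicate is verifying, at every interchange of integration, contour deformation and series expansion, that the nested cycles $U(2)_{r_1}\subset\cdots\subset U(2)_{r_n}$ together with $U(2)_{R_1}$ and $S^3_{R_2}$ sit in the positions forced by (\ref{r_max})--(\ref{r_min}) — precisely where the Minkowski case parts ways with the Euclidean one, and where a careless choice of cycle would make the integral vanish.
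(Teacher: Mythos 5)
Your base case is fine and matches what the paper does for $n=1$. The gap is in the inductive step, and it begins with the choice of realization. The $n$-loop ladder of Figure \ref{ladder-var-label} is \emph{not} obtained by attaching every slingshot at $Z_1$: in that diagram $Z_1$ is a hub joined by solid edges to \emph{all} of $T_1,\dots,T_n$ and by $n-1$ dashed edges to $W_1$ (this is exactly the integrand (\ref{Z_1-integral})), while $Z_2$ is an end vertex joined to the single solid vertex $T_1$. The realization you describe (all slingshots attached at $Z_1$, giving the recursion from the proof of Lemma \ref{conformal}) is the mirror-image diagram in which the roles of $(Z_1,W_1)$ and $(Z_2,W_2)$ are exchanged; identifying its operator with the ladder operator is an instance of the magic identities (or of Lemma \ref{symmetry-lem}, which is proved only later, using equivariance that itself rests on Lemma \ref{gen-val-lemma}). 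Worse, in your realization $l^{(n)}$ is harmonic in $Z_1$, not in $Z_2$: the $Z_2$-dependence runs through $N(Z_2-W_2)^{n-1}\prod_i N(Z_2-T_i)^{-1}$, so your assertion that ``$l^{(n)}$ is harmonic in its second variable, hence so is $\Psi$'' is unjustified, and the sphere pairing and Poisson formula in the $Z_2$-slot are not available to you.

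The deeper problem is which generator you put in which slot. You pair the constant $1$ against the simple variable ($Z_1$, one solid edge) and $(z'_{11})^k$ against the hub ($Z_2$, all solid edges plus the accumulated dashed factors); the resulting $Z_2$- and $T_n$-integrations are exactly the computation you defer as ``the hard part,'' and your inductive hypothesis --- a statement about the fully integrated operator $L^{(n-1)}(1\otimes(z'_{11})^k)$ --- is not strong enough to evaluate $\Psi(Z_2)$, which requires the $Z_2$-dependence of $l^{(n-1)}(T_n,Z_2;W_1,W_2)$ with $T_n$ still free. The paper's proof does the opposite: it pairs $1$ against the hub $Z_1$, so that the entire $Z_1$-integral (\ref{Z_1-integral}) equals $1$ by the expansion (\ref{1/N-expansion}) and the orthogonality relations (\ref{orthogonality}); it pairs $(z'_{11})^k$ against the end vertex $Z_2$, transporting it to $(t_{11})^k$ by Proposition \ref{quotient-prop}; and what remains is the chain of Figure \ref{reduced-diagram}, where each successive $T_j$-integration is one application of Lemma \ref{z^p} and contributes exactly one step of the recursion (\ref{a-rec-rel}). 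No induction on the number of slingshots is needed in the ladder case. If you insist on peeling off a last slingshot attached at $Z_1$, you must evaluate $L^{(n)}$ on the generators $(z_{11})^k\otimes 1$ rather than $1\otimes(z'_{11})^k$ --- this is precisely what the paper does in the general-case section --- but that argument presupposes the ladder case already established.
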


\begin{proof}
One part of evaluating $L^{(n)} (1 \otimes (z'_{11})^k)$ is integrating over
$Z_1 \in U(2)_{R_1}$. First, we determine the effect of doing that.
Thus we integrate
\begin{equation}  \label{Z_1-integral}
\frac{i}{2\pi^3} \int_{Z_1 \in U(2)_{R_1}} \frac{N(Z_1-W_1)^{n-1}}
{N(Z_1-T_1) \cdot N(Z_1-T_2) \cdot \dots \cdot N(Z_1-T_n)} \, \frac{dV_1}{N(Z_1)},
\end{equation}
and we expand each $N(Z_1-T_j)^{-1}$ as in (\ref{1/N-expansion}):
$$
\frac 1{N(Z_1-T_j)}= N(Z_1)^{-1} \cdot \sum_{l,m,n}
t^l_{m\,\underline{n}}(T_j) \cdot t^l_{n\,\underline{m}}(Z_1^{-1}),
\qquad \begin{matrix} l=0,\frac 12, 1, \frac 32,\dots, \\
m,n = -l, -l+1, \dots, l. \end{matrix}
$$
Therefore,
$$
\frac1{N(Z_1-T_1) \cdot N(Z_1-T_2) \cdot \dots \cdot N(Z_1-T_n)} =
\frac1{N(Z_1)^n} + \text{ lower degree terms in $Z_1$}.
$$
On the other hand,
$$
N(Z_1-W_1)^{n-1} = N(Z_1)^{n-1} + \text{ lower degree terms in $Z_1$}.
$$
Comparing this with the orthogonality relations (\ref{orthogonality}),
we see that the integral (\ref{Z_1-integral}) is $1$.

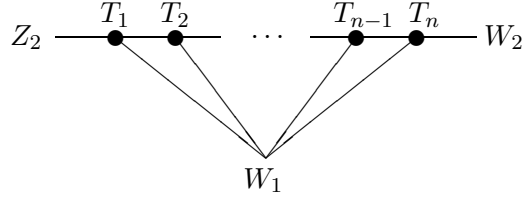
\begin{figure}
\centering\setlength{\unitlength}{1mm}
\begin{picture}(76,25)
\put(14,20){\circle*{2}}
\put(22,20){\circle*{2}}
\put(46,20){\circle*{2}}
\put(54,20){\circle*{2}}
\put(6,20){\line(1,0){22}}
\put(40,20){\line(1,0){22}}
\put(14,20){\line(5,-4){20}}
\put(54,20){\line(-5,-4){20}}
\put(22,20){\line(3,-4){12}}
\put(46,20){\line(-3,-4){12}}
\put(32,20){$\ldots$}
\put(0,19){$Z_2$}
\put(63,19){$W_2$}
\put(31,0){$W_1$}
\put(12,22){$T_1$}
\put(20,22){$T_2$}
\put(43,22){$T_{n-1}$}
\put(53,22){$T_n$}
\end{picture}
\caption{Reduced diagram.}
\label{reduced-diagram}
\end{figure}

Thus we end up integrating $(z'_{11})^k$ against something that
can be described by a reduced diagram in Figure \ref{reduced-diagram}.
When we integrate out the $Z_2$ variable, by Proposition \ref{quotient-prop},
we get
$$
\frac{i}{2\pi^3} \int_{Z_2 \in U(2)_{R_2}} \frac{\degt_{Z_2}((z'_{11})^k)}{N(Z_2-T_1)}
\,\frac{dV_2}{N(Z_2)} = (t_{11})^k.
$$
Then we integrate out the $T_1$ variable:
$$
\frac{i}{2\pi^3} \int_{T_1 \in U(2)_{r_1}} \frac{(t_{11})^k \,dV}
{N(W_1-T_1) \cdot N(T_1-T_2)}
$$
and, by Lemma \ref{z^p}, we get
$$
\sum_{p=0}^k a^n(1,p) \cdot (w_{11})^{k-p} \cdot (t'_{11})^p
$$
with each $a^k(1,p)$ given by (\ref{a-ini}).
Then we integrate the result against $\frac1{N(W_1-T_2) \cdot N(T_2-T_3)}$
and so on, until we integrate out the $T_n$ variable.
The recursive relation (\ref{a-rec-rel}) follows from Lemma \ref{z^p},
since the result of integration over $T_{n+1}$ has to be a linear
combination of $(w_{11})^{k-p} \cdot (w'_{11})^p$'s and the contribution to each
term $(w_{11})^{k-p} \cdot (w'_{11})^p$ comes precisely from the terms
$(w_{11})^{k-q} \cdot (t''_{11})^q$, $p \le q \le k$,
of the previous integration with weights $(q+1)^{-1}$.
\end{proof}

As a corollary of the proof, we also obtain:

\begin{cor}  \label{ladder-recursive-cor}
The operators $L^{(n)}$ associated to the $n$-loop ladder diagrams satisfy the
following recursive relation:
$$
L^{(n)} (1 \otimes (z'_{11})^k) = \frac1{k+1} \sum_{p=0}^k
(w_{11})^{k-p} \cdot L^{(n-1)} (1 \otimes (z'_{11})^p).
$$
\end{cor}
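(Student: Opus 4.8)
The plan is to read the identity directly off the proof of Lemma~\ref{gen-val-lemma}, rather than to re-derive the coefficients $a^k(n,p)$. Recall that in that proof the value $L^{(n)}(1\otimes(z'_{11})^k)$ is obtained by integrating out the variables in the order $Z_1$, $Z_2$, $T_1,\dots,T_n$, and that the first three of these integrations leave a clean intermediate form. Integrating out $Z_1$ contributes the scalar factor $1$, by the orthogonality relations~\eqref{orthogonality} applied to the expansion~\eqref{1/N-expansion}. Integrating out $Z_2$ replaces the pairing of $\degt_{Z_2}\bigl((z'_{11})^k\bigr)$ against $N(Z_2-T_1)^{-1}$ by $(t_{11})^k$, where $T_1=(t_{ij})$; this is Proposition~\ref{quotient-prop}, valid exactly as in the proof of Lemma~\ref{gen-val-lemma} since $T_1\in\BB D^+_{R_2}$. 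Integrating out $T_1$ against $N(W_1-T_1)^{-1}N(T_1-T_2)^{-1}$ then gives, by Lemma~\ref{z^p}, the expression $\frac1{k+1}\sum_{p=0}^k (w_{11})^{k-p}(t'_{11})^p$ (writing $T_2=(t'_{ij})$). What remains in the evaluation of $L^{(n)}(1\otimes(z'_{11})^k)$ is to integrate this against the surviving factors of the reduced diagram of Figure~\ref{reduced-diagram}, namely $\bigl[\prod_{j=2}^{n}N(W_1-T_j)^{-1}\bigr]\bigl[\prod_{j=2}^{n-1}N(T_j-T_{j+1})^{-1}\bigr]N(T_n-W_2)^{-1}$, over $T_2\in U(2)_{r_2},\dots,T_n\in U(2)_{r_n}$. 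That operation is linear in its input, so $\frac1{k+1}\sum_p(w_{11})^{k-p}$ pulls out and we are left, term by term in $p$, with the integral of $(t'_{11})^p$ against those same factors.

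The second step is to recognize that leftover integral, for each fixed $p$, as $L^{(n-1)}(1\otimes(z'_{11})^p)(W_1,W_2)$ for the $(n-1)$-loop ladder. This follows from running the proof of Lemma~\ref{gen-val-lemma} one level down: evaluating $L^{(n-1)}(1\otimes(z'_{11})^p)$ also begins by integrating out $Z_1$ (again producing the factor $1$) and $Z_2$ (turning the input into $(t_{11})^p$ sitting at the first solid vertex of $d^{(n-1)}$), after which one integrates $(t_{11})^p$ over $T_1,\dots,T_{n-1}$ against $\bigl[\prod_{j=1}^{n-1}N(W_1-T_j)^{-1}\bigr]\bigl[\prod_{j=1}^{n-2}N(T_j-T_{j+1})^{-1}\bigr]N(T_{n-1}-W_2)^{-1}$. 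Under the trivial relabeling $T_j\mapsto T_{j+1}$ (with a matching shift of the radii) this is precisely the leftover integral from the $n$-loop computation, with input $(t'_{11})^p$. Summing over $p$ against the weights $\frac1{k+1}(w_{11})^{k-p}$ then yields exactly the asserted recursion.

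The one point that needs care is the diagram bookkeeping in the second step: one must check that the reduced diagram of the $n$-loop ladder (Figure~\ref{reduced-diagram}), with its first vertex $T_1$ and the two edges $W_1$–$T_1$ and $T_1$–$T_2$ deleted, becomes — after relabeling $T_2,\dots,T_n$ as $T_1,\dots,T_{n-1}$ — the reduced diagram of the $(n-1)$-loop ladder. This is immediate from the inductive slingshot construction of the ladder, each slingshot adding exactly one rung at the $Z$-end; equivalently, one simply matches the explicit integrand factors displayed above. One must also check that the radii $r_1<\dots<r_n$ and the domain memberships $W_i\in\BB D^+_{r_{\mathrm{min},i}}$, $T_1\in\BB D^+_{R_2}$ used above restrict to an admissible configuration for $d^{(n-1)}$ in the sense of~\eqref{r_max}--\eqref{r_min}; this too is routine. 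I would deliberately avoid deducing the corollary from the closed form of Lemma~\ref{gen-val-lemma}: that route would force one to identify the recursion~\eqref{a-rec-rel}, $a^k(n{+}1,p)=\sum_{q=p}^k\frac1{q+1}a^k(n,q)$, with the superficially different recursion $a^k(n,p)=\frac1{k+1}\sum_{q=p}^k a^q(n{-}1,p)$ that the statement encodes, an unnecessary detour.
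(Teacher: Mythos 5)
Your proposal is correct and is essentially the paper's own argument: the paper presents this statement with the single phrase ``as a corollary of the proof'' of Lemma~\ref{gen-val-lemma}, meaning precisely what you spell out --- stop the chain of integrations after the $Z_1$, $Z_2$ and $T_1$ steps, which yield the factor $\frac1{k+1}\sum_{p}(w_{11})^{k-p}(t'_{11})^p$, and recognize the remaining integrations over $T_2,\dots,T_n$ applied to $(t'_{11})^p$ as the $(n-1)$-loop ladder computation after relabeling. Your explicit bookkeeping of the reduced diagrams and radii, and your decision not to route the argument through the closed form and the recursion~(\ref{a-rec-rel}), match the intended reading.
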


The proof of Theorem \ref{main-thm} would have been much easier if we knew
in advance that the operator $L^{(n)}$ is $\mathfrak{gl}(2,\HC)$-equivariant.
To deal with this issue, we introduce a closely related integral operator
for which $\mathfrak{gl}(2,\HC)$-equivariance is much easier to see.
Following notations from \cite{L}, we let
$$
\mathring{L}^{(n)} : (\varpi_2^l, \Zh) \otimes (\pi^0_r, {\cal H}^+) \to
(\pi^0_l, \Zh^+) \otimes (\pi^0_r, {\cal H}^+),
$$
$$
\mathring{L}^{(n)} (f \otimes \phi)(W_1,W_2)
= \frac{i}{4\pi^5} \iint_{\genfrac{}{}{0pt}{}{Z_1 \in U(2)_{R_1}}{Z_2 \in S^3_{R_2}}}
l^{(n)}(Z_1,Z_2;W_1,W_2) \cdot f(Z_1) \cdot (\degt_{Z_2} \phi)(Z_2)
\,dV_1\,\frac{dS_2}{R_2},
$$
where $f \in \Zh$, $\phi \in {\cal H}^+$, $R_1 > r_{\text{max},1}$,
$R_2 > r_{\text{max},2}$, $W_1 \in \BB D^+_{r_{\text{min},1}}$,
$W_2 \in \BB D^+_{r_{\text{min},2}}$, as before.
The $\mathfrak{gl}(2,\HC)$-equivariance of this operator follows from
the $\mathfrak{gl}(2,\HC)$-invariance of the bilinear pairings
(\ref{pairing}), (\ref{pairing2}) and Lemma \ref{conformal}.
Clearly, we have
\begin{equation}  \label{LL2}
L^{(n)} = \mathring{L}^{(n)} \circ \bigl( N(Z_1)^{-1} \cdot \degt_{Z_1} \bigr).
\end{equation}

\begin{lem}  \label{annihilator-lem1}
The operator $\mathring{L}^{(n)}$ annihilates $I_2^- \otimes {\cal H}^+$.
\end{lem}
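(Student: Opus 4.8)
The plan is to exploit the $\mathfrak{gl}(2,\HC)$-equivariance of $\mathring{L}^{(n)}$ together with a single computation on the ``lowest'' vector of $I_2^-$. Since $\mathring{L}^{(n)}$ is $\mathfrak{gl}(2,\HC)$-equivariant, $\ker\mathring{L}^{(n)}$ is a $\mathfrak{gl}(2,\HC)$-submodule of $(\varpi_2^l,\Zh)\otimes(\pi^0_r,{\cal H}^+)$. I would first record that $N(Z)^{-2}$ generates $I_2^-$ as a $\varpi_2^l$-module: by Theorem 8 in \cite{L} the only proper $\varpi_2^l$-invariant subspaces contained in $I_2^-$ are $\Zh_2^-$, $J_2$ and $\Zh_2^-\oplus J_2$, and $N(Z)^{-2}=t^0_{0\,\underline{0}}(Z)N(Z)^{-2}$ lies in none of them (for $l=0$ the index range $-(2l+1)\le k\le-2$ defining $J_2$ is empty, and $\Zh_2^-$ contains $t^0_{0\,\underline{0}}(Z)N(Z)^k$ only for $k\le-3$). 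Hence the $\varpi_2^l$-translates of $N(Z)^{-2}$ span all of $I_2^-$.

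Next I would show $\mathring{L}^{(n)}(N(Z_1)^{-2}\otimes\phi)=0$ for every $\phi\in{\cal H}^+$. By Fubini it suffices to prove that for each fixed $Z_2,W_1,W_2$ in the admissible domains the inner integral $\frac i{2\pi^3}\int_{Z_1\in U(2)_{R_1}}l^{(n)}(Z_1,Z_2;W_1,W_2)\,N(Z_1)^{-2}\,dV_1$ vanishes. On $\BB D^-_{R_1}$ with $R_1>r_{\mathrm{max},1}$ I would expand $l^{(n)}(Z_1,Z_2;W_1,W_2)$ in the variable $Z_1$: write each factor $N(Z_1-T_j)^{-1}$ occurring in the integrand for $l^{(n)}$ via the matrix-coefficient expansion (\ref{1/N-expansion}), which is legitimate and uniformly convergent since $\|T_jZ_1^{-1}\|<1$ for $Z_1\in U(2)_{R_1}$ and $T_j\in U(2)_{r_j}$ with $r_j\le r_{\mathrm{max},1}<R_1$; expand the polynomial $N(Z_1-W_1)^{n-1}$; and integrate term by term over the cycles $U(2)_{r_k}$. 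This yields an absolutely convergent expansion of $l^{(n)}(Z_1,Z_2;W_1,W_2)$, as a function of $Z_1$, into terms $C(Z_2,W_1,W_2)\cdot t^l_{n\,\underline{m}}(Z_1)N(Z_1)^k$. A degree count shows that every such term is homogeneous in $Z_1$ of strictly negative degree: $N(Z_1-W_1)^{n-1}$ contributes homogeneous pieces of degree at most $2(n-1)$, while each of the $n$ factors $N(Z_1-T_j)^{-1}$ contributes only pieces $N(Z_1)^{-1}t^{l_j}_{\cdot\,\underline{\cdot}}(Z_1^{-1})=t^{l_j}_{\cdot\,\underline{\cdot}}(Z_1)N(Z_1)^{-2l_j-1}$ of degree $-2l_j-2\le-2$, so the product has degree $\le 2(n-1)-2n=-2<0$. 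In particular the term $t^0_{0\,\underline{0}}(Z_1)N(Z_1)^0=1$ does not occur. Now $N(Z_1)^{-2}=t^0_{0\,\underline{0}}(Z_1)N(Z_1)^{-2}$, and the orthogonality relation (\ref{orthogonality}) identifies $\frac i{2\pi^3}\int_{U(2)_{R_1}}e(Z_1)N(Z_1)^{-2}\,dV_1$ with the coefficient of $1$ in the $\Zh$-basis expansion of a function $e$ of $Z_1$; applied term by term (the functional $e\mapsto\langle N(Z_1)^{-2},e\rangle$ is continuous for uniform convergence on $U(2)_{R_1}$) to the expansion above, it gives $0$. Thus the inner integral vanishes.

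Finally I would conclude. The submodule $\ker\mathring{L}^{(n)}$ contains $N(Z_1)^{-2}\otimes{\cal H}^+$. For $\xi\in\mathfrak{gl}(2,\HC)$ and $\phi\in{\cal H}^+$ we have $\xi\cdot(N(Z_1)^{-2}\otimes\phi)=\bigl(\varpi_2^l(\xi)N(Z_1)^{-2}\bigr)\otimes\phi+N(Z_1)^{-2}\otimes\bigl(\pi^0_r(\xi)\phi\bigr)$, and the second summand again lies in $N(Z_1)^{-2}\otimes{\cal H}^+\subseteq\ker\mathring{L}^{(n)}$; hence $\bigl(\varpi_2^l(\xi)N(Z_1)^{-2}\bigr)\otimes\phi\in\ker\mathring{L}^{(n)}$ for all $\xi$ and $\phi$. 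Iterating and using that the $\varpi_2^l$-translates of $N(Z_1)^{-2}$ span $I_2^-$, we obtain $I_2^-\otimes{\cal H}^+\subseteq\ker\mathring{L}^{(n)}$, which is the claim.

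The step requiring the most care is the second one: rigorously justifying that $l^{(n)}(Z_1,Z_2;W_1,W_2)$, as a function of $Z_1$ on $\BB D^-_{R_1}$, genuinely admits the asserted term-by-term expansion consisting only of negative-degree terms, i.e.\ the interchange of the $T_k$-integrations with the matrix-coefficient and polynomial expansions. For the ladder diagrams this is direct from the $Z_1$-dependence $N(Z_1-W_1)^{n-1}/\prod_jN(Z_1-T_j)$; for an arbitrary $n$-loop box diagram one argues by induction on $n$, peeling off the last slingshot exactly as in the proof of Lemma \ref{conformal}, so that the $Z_1$-dependence of $l^{(n)}$ is again controlled by finitely many factors $N(Z_1-\,\cdot\,)^{\pm1}$ with the ``outer'' variables of that slingshot integrated over strictly smaller cycles. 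Everything else — the equivariance of $\mathring{L}^{(n)}$, the submodule classification quoted from \cite{L}, and the final generation argument — is formal.
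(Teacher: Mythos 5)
Your proof is correct, but it takes a genuinely different route from the paper's. The paper works directly with an arbitrary basis element of $I_2^-$: it writes $f\in I_2^-$ as a sum of terms $f_l(Z_1)\cdot N(Z_1)^{-l}$ with $f_l\in{\cal H}^+$ and $l\ge 2$, pushes the $Z_1$-integration innermost (so the $T_j$ are frozen parameters and the $Z_1$-dependence is the explicit finite product $f_l(Z_1)N(Z_1)^{-l}N(Z_1-W_1)^{n-1}/\prod_jN(Z_1-T_j)$), expands via (\ref{1/N-expansion}), and kills the integral by matching against the orthogonality relations (\ref{orthogonality}); the condition $l\ge2$ enters because the denominator part only produces terms $t^{\lambda}(Z_1^{-1})N(Z_1)^{-\kappa}$ with $\kappa\ge n+l$ while the numerator part only produces terms $t^{\tau}(Z_1)N(Z_1)^{k'}$ with $k'\le n-1$, so no indices match. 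You instead use the $\mathfrak{gl}(2,\BB H_{\BB C})$-equivariance of $\mathring{L}^{(n)}$ (which the paper does establish before the lemma) to reduce to the single vector $N(Z_1)^{-2}$, correctly verifying from the submodule classification of $(\varpi_2^l,\Zh)$ that the unique maximal invariant subspace of $I_2^-$ is $\Zh_2^-\oplus J_2$ and that $N(Z)^{-2}$ lies outside it; for that one generator the vanishing is a pure degree count (every term in the $Z_1$-expansion has degree $\le-2$, while pairing against $N(Z_1)^{-2}$ extracts the degree-zero coefficient), and your final generation argument for the tensor product is the standard Leibniz-rule induction and is sound. What your approach buys is that the delicate index-matching in the orthogonality relations is replaced by a one-line homogeneity argument; what it costs is the extra reliance on Theorem 8 of \cite{L} and the generation step, plus the interchange of the $T_j$-integrations with the matrix-coefficient expansion of $l^{(n)}$ in $Z_1$, which you rightly flag as the delicate point --- you could sidestep it entirely by doing what the paper does and taking the $Z_1$-integral innermost, after which the same degree count applies to an explicit finite product. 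Both arguments cover the ladder case, which is what this lemma asserts (the general diagrams are treated in the paper by the companion Lemma \ref{annihilator-lem1-gen} with the roles of $Z_1$ and $Z_2$ exchanged), and your inductive sketch for general diagrams is consistent with the structural fact the paper uses there, namely that each external vertex has one more solid edge than dashed edges.
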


\begin{proof}
Consider a pure tensor $f \otimes \phi$ with $f \in I_2^-$ and
$\phi \in {\cal H}^+$. Then $f(Z_1)$ is a sum of terms
$f_l(Z_1) \cdot N(Z_1)^{-l}$ with $f_l \in {\cal H}^+$ and $l \ge 2$.
Without loss of generality we can assume that each $f_l$ is homogeneous.
As in the proof of Lemma \ref{gen-val-lemma}, we observe that,
as a part of evaluating
$\mathring{L}^{(n)} \bigl( f_l(Z_1) \cdot N(Z_1)^{-l} \cdot \phi(Z_2) \bigr)$,
one needs to integrate over $Z_1 \in U(2)_{R_1}$:
\begin{equation}  \label{Z_1-integral-2}
\frac{i}{2\pi^3} \int_{Z_1 \in U(2)_{R_1}}
\frac{f_l(Z_1) \cdot N(Z_1)^{-l} \cdot N(Z_1-W_1)^{n-1}}
{N(Z_1-T_1) \cdot N(Z_1-T_2) \cdot \dots \cdot N(Z_1-T_n)} \,dV_1.
\end{equation}
Expanding each $N(Z_1-T_j)^{-1}$ as in (\ref{1/N-expansion}), we get
$$
\frac{N(Z_1)^{-l}}{N(Z_1-T_1) \cdot N(Z_1-T_2) \cdot \dots \cdot N(Z_1-T_n)} =
\frac1{N(Z_1)^{n+l}} + \text{ lower degree terms in $Z_1$}.
$$
On the other hand,
$$
f_l(Z_1) \cdot N(Z_1-W_1)^{n-1}
= f_l(Z_1) \cdot N(Z_1)^{n-1} + \text{ lower degree terms in $Z_1$}.
$$
Comparing this with orthogonality relations (\ref{orthogonality}),
since $l \ge 2$, we see that the integral (\ref{Z_1-integral-2}) and hence
$\mathring{L}^{(n)} \bigl( f_l(Z_1) \cdot N(Z_1)^{-l} \otimes \phi(Z_2) \bigr)$
are $0$.
\end{proof}

Let $\mathfrak{V} \subset \Zh \otimes {\cal H}^+$ denote the
subrepresentation of $(\varpi_2^l, \Zh) \otimes (\pi^0_r, {\cal H}^+)$
generated by
$$
\bigl\{ N(Z)^{-1} \cdot (z'_{11})^k ;\: k=0,1,2,3,\dots \bigr\}.
$$
Thus we have a $\mathfrak{gl}(2,\HC)$-equivariant map
$$
\mathring{L}^{(n)} : (\varpi_2^l \otimes \pi^0_r, \mathfrak{V})
\to (\pi^0_l, {\cal H}^+) \otimes (\pi^0_r, {\cal H}^+).
$$

\begin{lem}  \label{annihilator-lem2}
The operator $\mathring{L}^{(n)}$ annihilates
$\mathfrak{V} \cap (\Zh^+ \otimes {\cal H}^+)$.
\end{lem}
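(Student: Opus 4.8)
The plan rests on three ingredients: the $\mathfrak{gl}(2,\HC)$-equivariance of $\mathring{L}^{(n)}$, the explicit values of Lemma \ref{gen-val-lemma}, and the submodule structure of $(\varpi_2^l,\Zh)$ recorded in Theorem 8 of \cite{L} and in Proposition \ref{quotient-prop}.

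First I would localize $\mathfrak{V}$. Since $N(Z)^{-1}$ has $N$-degree $-1$ it belongs to the $\varpi_2^l$-submodule $I_2^+$, so every generator $N(Z)^{-1}(z'_{11})^k$ lies in the submodule $I_2^+\otimes{\cal H}^+$, whence $\mathfrak{V}\subseteq I_2^+\otimes{\cal H}^+$. Using Proposition \ref{quotient-prop} I form the equivariant map $\tilde q = q\otimes\mathrm{id}$, where $q\colon(\varpi_2^l,\Zh)\twoheadrightarrow\bigl(\varpi_2^l,\Zh/(I_2^-\oplus\Zh^+)\bigr)\cong(\pi^0_l,{\cal H}^+)$. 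Since $q(N(Z)^{-1})=1$, one has $\tilde q(N(Z)^{-1}(z'_{11})^k)=1\otimes(z'_{11})^k$, so by Lemma \ref{product-lemma} the restriction $\tilde q|_\mathfrak{V}\colon\mathfrak{V}\to(\pi^0_l,{\cal H}^+)\otimes(\pi^0_r,{\cal H}^+)$ is surjective with kernel $\mathfrak{V}\cap\bigl((I_2^-\oplus\Zh^+)\otimes{\cal H}^+\bigr)$; since $\Zh^+\otimes{\cal H}^+\subseteq\ker\tilde q$, the subspace $\mathfrak{V}\cap(\Zh^+\otimes{\cal H}^+)$ to be annihilated sits inside $\ker(\tilde q|_\mathfrak{V})$.

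Now take $w\in\mathfrak{V}\cap(\Zh^+\otimes{\cal H}^+)$ and write $w=\sum_j u_j\cdot\bigl(N(Z)^{-1}(z'_{11})^{k_j}\bigr)$ with $u_j\in U(\mathfrak{gl}(2,\HC))$ acting through $\varpi_2^l\otimes\pi^0_r$. By equivariance of $\mathring{L}^{(n)}$ and the identity $\mathring{L}^{(n)}(N(Z)^{-1}(z'_{11})^k)=L^{(n)}(1\otimes(z'_{11})^k)$ coming from (\ref{LL2}), we get $\mathring{L}^{(n)}(w)=\sum_j u_j\cdot L^{(n)}(1\otimes(z'_{11})^{k_j})$; by equivariance of $\tilde q$ and $w\in\ker\tilde q$ we get the relation $\sum_j u_j\cdot(1\otimes(z'_{11})^{k_j})=0$ in $(\pi^0_l,{\cal H}^+)\otimes(\pi^0_r,{\cal H}^+)$. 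Thus the lemma reduces to the assertion that the rule $1\otimes(z'_{11})^k\mapsto L^{(n)}(1\otimes(z'_{11})^k)=\sum_{p=0}^k a^k(n,p)(w_{11})^{k-p}(w'_{11})^p$ of Lemma \ref{gen-val-lemma} respects every $\mathfrak{gl}(2,\HC)$-relation among these generators, i.e. extends to a $\mathfrak{gl}(2,\HC)$-equivariant endomorphism $\Lambda$ of ${\cal H}^+\otimes{\cal H}^+$; granting this, $\mathring{L}^{(n)}(w)=\Lambda(0)=0$.

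I expect this last step to be the main obstacle. Since ${\cal H}^+\otimes{\cal H}^+=\bigoplus_{k\ge1}(\rho_k,\Zh^+\otimes\BB C^{k\times k})$ is multiplicity free, Schur's Lemma forces such a $\Lambda$ to act on $(\rho_k,\ldots)$ by a scalar $\mu^{(n)}_k$; expanding $1\otimes(z'_{11})^k$ into its components in $(\rho_1,\ldots),\dots,(\rho_{k+1},\ldots)$ (as in the proof of Lemma \ref{product-lemma}, where $(z_{11}-z'_{11})^k$ is shown to give a nonzero top component) and matching with the values of Lemma \ref{gen-val-lemma}, the recursion (\ref{a-rec-rel}) turns into a consistent triangular system for the $\mu^{(n)}_k$ whose solution is precisely the binomial-sum formula of Theorem \ref{main-thm}; this consistency is the same elementary bookkeeping that drives the proof of Theorem \ref{main-thm}. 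A more structural alternative avoids it altogether: one checks that $\mathfrak{V}\cap(\Zh^+\otimes{\cal H}^+)$, being a submodule of $(\varpi_2^l,\Zh^+)\otimes(\pi^0_r,{\cal H}^+)$, has no irreducible constituent in common with $\bigoplus_k(\rho_k,\ldots)$, so that every $\mathfrak{gl}(2,\HC)$-equivariant map out of it into ${\cal H}^+\otimes{\cal H}^+$ — in particular the restriction of $\mathring{L}^{(n)}$ — must vanish; in that approach the comparison of constituents becomes the crux.
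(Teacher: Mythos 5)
Your reduction is sound as far as it goes, but it stops exactly where the real work begins, so there is a genuine gap. After localizing the problem you reduce the lemma to the assertion that the assignment $1\otimes(z'_{11})^k\mapsto L^{(n)}(1\otimes(z'_{11})^k)$ extends to a $\mathfrak{gl}(2,\HC)$-equivariant endomorphism of ${\cal H}^+\otimes{\cal H}^+$ (equivalently, that $\mathfrak{V}\cap(\Zh^+\otimes{\cal H}^+)$ shares no irreducible constituent with $\bigoplus_k(\rho_k,\Zh^+\otimes\BB C^{k\times k})$). You correctly flag this as ``the main obstacle'' and ``the crux,'' but you never prove it. The first route you sketch is not actually available at this stage: Schur's Lemma only tells you how an equivariant $\Lambda$ \emph{would} act if it existed; to verify that your triangular system for the $\mu^{(n)}_k$ is consistent you would have to decompose each $1\otimes(z'_{11})^k$ explicitly into its components in the $(\rho_j,\ldots)$ and match them against the coefficients $a^k(n,p)$ --- a computation you do not perform, and which cannot be borrowed from the proof of Theorem \ref{main-thm}, since that proof invokes the equivariance of $L^{(n)}$ on all of ${\cal H}^+\otimes{\cal H}^+$, which the paper establishes only \emph{after}, and by means of, the present lemma. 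The second, ``structural'' route is the right one, but the comparison of constituents is precisely the content of the lemma and is left unchecked.

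The paper closes exactly this gap with a short degree-plus-$K$-type count. One first observes that $\mathring{L}^{(n)}$ raises total degree by $2$. If some $x\in\mathfrak{V}\cap(\Zh^+\otimes{\cal H}^+)$ had $\mathring{L}^{(n)}(x)\ne0$, then by equivariance one may assume $\mathring{L}^{(n)}(x)=(z_{ij}-z'_{ij})^k$, whose span is the $U(2)\times U(2)$-type $V_{k/2}\boxtimes V_{k/2}$; only the homogeneous component $x'$ of degree $k-2$ of $x$ contributes, and $x'$ lies in the sum of $N(Z)^p\cdot(V_l\boxtimes V_l)\otimes(V_{l'}\boxtimes V_{l'})$ over $2l+2p+2l'=k-2$. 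Since $V_l\otimes V_{l'}$ contains $V_{k/2}$ only when $l+l'\ge k/2$, this is impossible. Note that this argument uses the hypothesis $x\in\Zh^+\otimes{\cal H}^+$ (no negative powers of $N(Z)$, so that degree bounds the available $K$-types) in an essential way. Without supplying this step, or an equivalent explicit computation, your proposal does not establish the lemma.
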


\begin{proof}
We proceed as in the proof of Lemma 28 in \cite{L}.
Observe that the operator $\mathring{L}^{(n)}$ increases the total
degree of an element of $\Zh \otimes {\cal H}^+$ by 2.
Now, suppose that there exists an element
$x \in \mathfrak{V} \cap (\Zh^+ \otimes {\cal H}^+)$ such that
$\mathring{L}^{(n)}(x) \ne 0$. Since $\mathring{L}^{(n)}$ is
$\mathfrak{gl}(2,\HC)$-equivariant, without loss of generality we can
assume that $\mathring{L}^{(n)}(x)$ belongs to one of the irreducible components
of $(\pi^0_l, {\cal H}^+) \otimes (\pi^0_r, {\cal H}^+)$.
Furthermore, we may assume that
$$
\mathring{L}^{(n)}(x) = (z_{ij}-z'_{ij})^k \qquad \text{for some }
x \in \mathfrak{V} \cap (\Zh^+ \otimes {\cal H}^+),\qquad k=0,1,2,\dots.
$$
Since $(z_{ij}-z'_{ij})^k$ is homogeneous of degree $k$, only the homogeneous
component $x'$ of degree $k-2$ of $x$ contributes anything to
$\mathring{L}^{(n)}(x)$, and $x' \in \Zh^+ \otimes {\cal H}^+$.

Now, let us regard $\mathring{L}^{(n)}$ as a $U(2) \times U(2)$ equivariant
map $(\varpi_2^l, \Zh^+) \otimes (\pi^0_l, {\cal H}^+) \to
(\pi^0_l, \Zh^+) \otimes (\pi^0_r, {\cal H}^+)$.
We have:
$$
\mathring{L}^{(n)}(x') = (z_{ij}-z'_{ij})^k \quad
\in V_{\frac{k}2} \boxtimes V_{\frac{k}2}.
$$
Since the degree of $x'$ is $k-2$,
$$
x' \in \bigoplus_{\genfrac{}{}{0pt}{}{2l+2p+2l'=k-2}{p,l,l' \ge 0}}
N(Z)^p \cdot (V_l \boxtimes V_l) \otimes (V_{l'} \boxtimes V_{l'}).
$$
But $V_l \otimes V_{l'}$ does not contain $V_{\frac{k}2}$ unless $l+l' \ge k/2$,
which produces a contradiction.
\end{proof}

Combining Lemmas \ref{annihilator-lem1} and \ref{annihilator-lem2}, we see that
$\mathring{L}^{(n)}$ descends to a well-defined
$\mathfrak{gl}(2,\HC)$-equivariant quotient map
\begin{equation}  \label{quotient-map}
\frac{\mathfrak{V}}{\mathfrak{V} \cap ((I^-_2 \oplus \Zh^+) \otimes {\cal H}^+)}
\to {\cal H}^+ \otimes {\cal H}^+.
\end{equation}
Clearly, this quotient space is a $\mathfrak{gl}(2,\HC)$-invariant subspace of 
$(\Zh/(I^-_2 \oplus \Zh^+)) \otimes {\cal H}^+$.
By Proposition \ref{quotient-prop} and Lemma \ref{product-lemma}, we have
the following isomorphisms of representations of $\mathfrak{gl}(2,\HC)$:
$$
\biggl( \varpi_2^l \otimes \pi^0_r, \frac{\mathfrak{V}}{\mathfrak{V} \cap
((I^-_2 \oplus \Zh^+) \otimes {\cal H}^+)} \biggr) \simeq
\biggl( \varpi_2^l \otimes \pi^0_r, \frac{\Zh}{I^-_2 \oplus \Zh^+}
\otimes {\cal H}^+\biggr)
\simeq (\pi^0_l, {\cal H}^+) \otimes (\pi^0_r, {\cal H}^+).
$$
From (\ref{LL2}) and Lemma \ref{gen-val-lemma} we conclude that
the operator $L^{(n)}$ has image in ${\cal H}^+ \otimes {\cal H}^+$
and the map (\ref{Ln}) is $\mathfrak{gl}(2,\HC)$-equivariant.

Since the irreducible components in the decomposition (\ref{tensor-decomp})
are pairwise distinct, by Schur's Lemma, $L^{(n)}$ must act on each irreducible
component $(\rho_k,\Zh^+\otimes \BB C^{k \times k})$ by multiplication by some
scalar $\mu^{(n)}_k$. Since $(w_{11}-w'_{11})^{k-1}$ generates
$\Zh^+\otimes \BB C^{k \times k}$, these scalars can be found by computing
the ratio of inner products
\begin{multline*}
\mu^{(n)}_k = \frac{\bigl\langle L^{(n)}(1 \otimes (z'_{11})^{k-1}),
(w_{11}-w'_{11})^{k-1} \bigr\rangle_{inn.\: prod.}}
{\bigl\langle 1 \otimes (w'_{11})^{k-1}, (w_{11}-w'_{11})^{k-1}
\bigr\rangle_{inn.\: prod.}}  \\
= \sum_{p=0}^{k-1} (-1)^{k+p+1} \cdot \begin{pmatrix} k-1 \\ p \end{pmatrix}
\cdot \bigl\langle L^{(n)}(1 \otimes (z'_{11})^{k-1}),
(w_{11})^{k-p-1} \cdot (w'_{11})^p \bigr\rangle_{inn.\: prod.}.
\end{multline*}
This sum can be evaluated using Lemma \ref{gen-val-lemma},
equation (\ref{t-special}) and orthogonality relationships
(\ref{H-unitary-orthogonality}).
This finishes the proof of Theorem \ref{main-thm} in the special case
of ladder diagrams.

\subsection{The General Case}

Now we prove Theorem \ref{main-thm} in complete generality, where
$L^{(n)}$ is the operator associated to any $n$-loop box diagram $d^{(n)}$.
The proof is by induction on the number of loops (or slingshots).
The case of the one-loop diagram (Figure \ref{12ladder}) was done in
\cite{FL1} and \cite{FL3}. So, suppose that the diagram $d^{(n)}$ is
obtained by adding a slingshot to an $(n-1)$-loop box diagram $d^{(n-1)}$.
As was mentioned before, there are exactly four ways of doing that
-- so that one of $Z_1$, $Z_2$, $W_1$ or $W_2$ becomes a solid vertex.
For concreteness, let us assume that the slingshot is attached to $d^{(n-1)}$
so that $Z_1$ becomes a solid vertex, the other cases are similar and easier.

Let $\tilde d^{(n)}$ be the $n$-loop ladder diagram
(Figure \ref{ladder-var-label}), and let $\tilde L^{(n)}$ be the
corresponding integral operator.
First, we prove the following symmetry property for $\tilde L^{(n)}$
(when $n=2$ it is a direct analogue of equation (8) in \cite{DHSS}).

\begin{lem}  \label{symmetry-lem}
The operator $\tilde L^{(n)}: (\pi^0_l, {\cal H}^+) \otimes (\pi^0_r, {\cal H}^+)
\to (\pi^0_l, {\cal H}^+) \otimes (\pi^0_r, {\cal H}^+)$
has the following symmetry:
$$
\tilde L^{(n)} (\phi_1 \otimes \phi_2)(W_1,W_2)
= \tilde L^{(n)} (\phi_2 \otimes \phi_1)(W_2,W_1),
\qquad \phi_1,\phi_2 \in {\cal H}^+.
$$
\end{lem}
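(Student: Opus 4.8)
The plan is to deduce the symmetry from two facts that are in place by this point: first, that $\tilde L^{(n)}$ is a $\mathfrak{gl}(2,\HC)$-equivariant endomorphism of $(\pi^0_l,{\cal H}^+)\otimes(\pi^0_r,{\cal H}^+)$, as established for ladder diagrams at the end of the preceding subsection; and second, the multiplicity-free decomposition (\ref{tensor-decomp}). I would begin by reformulating the claim. Let $\sigma\colon{\cal H}^+\otimes{\cal H}^+\to{\cal H}^+\otimes{\cal H}^+$ be the flip $\sigma(\phi_1\otimes\phi_2)=\phi_2\otimes\phi_1$; on the level of functions $\sigma$ interchanges the two arguments $W_1$ and $W_2$. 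Writing out the definition of $\tilde L^{(n)}(\phi_2\otimes\phi_1)(W_2,W_1)$ and relabelling the two interior integration variables $Z_1\leftrightarrow Z_2$ (swapping the cycles $U(2)_{R_1}$ and $U(2)_{R_2}$ accordingly) turns that expression into $\bigl(\sigma\circ\tilde L^{(n)}\bigr)(\phi_1\otimes\phi_2)(W_1,W_2)$. Thus the identity asserted in the lemma is precisely $\sigma\circ\tilde L^{(n)}=\tilde L^{(n)}\circ\sigma$, and it suffices to show $\tilde L^{(n)}$ commutes with $\sigma$.

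Next I would record that $\sigma$ is $\mathfrak{sl}(2,\HC)$-equivariant: since the actions $\pi^0_l$ and $\pi^0_r$ coincide on $SL(2,\HC)$, their differentials agree on $\mathfrak{sl}(2,\HC)$, so the tensor-product action of $\mathfrak{sl}(2,\HC)$ on ${\cal H}^+\otimes{\cal H}^+$ is literally symmetric under interchange of the two factors. On the other hand $\tilde L^{(n)}$, being $\mathfrak{gl}(2,\HC)$-equivariant for ladder diagrams, is a fortiori $\mathfrak{sl}(2,\HC)$-equivariant. Hence both $\sigma$ and $\tilde L^{(n)}$ lie in $\operatorname{End}_{\mathfrak{sl}(2,\HC)}({\cal H}^+\otimes{\cal H}^+)$.

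Finally, by Theorem \ref{JV-thm} the decomposition (\ref{tensor-decomp}), restricted to $\mathfrak{sl}(2,\HC)$, is a direct sum of pairwise non-isomorphic irreducibles, i.e. the representation $(\pi^0_l,{\cal H}^+)\otimes(\pi^0_r,{\cal H}^+)$ is multiplicity-free over $\mathfrak{sl}(2,\HC)$. By Schur's Lemma every $\mathfrak{sl}(2,\HC)$-equivariant endomorphism then preserves each summand $(\rho_k,\Zh^+\otimes\BB C^{k\times k})$ and acts on it by a scalar, so $\operatorname{End}_{\mathfrak{sl}(2,\HC)}({\cal H}^+\otimes{\cal H}^+)$ is a commutative algebra. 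Therefore $\sigma$ and $\tilde L^{(n)}$ commute, which is the desired identity. (Note in particular that this argument actually produces, as a by-product, the pointwise ladder-integral symmetry $l^{(n)}(Z_1,Z_2;W_1,W_2)=l^{(n)}(Z_2,Z_1;W_2,W_1)$ tested against harmonic functions, rather than requiring it as input the way the direct computation in \cite{DHSS} does for $n=2$.)

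The argument is essentially automatic once the ingredients are assembled, so there is no genuine obstacle; the only step that takes a moment is the reformulation in the first paragraph, where one must check that interchanging $Z_1\leftrightarrow Z_2$ and $W_1\leftrightarrow W_2$ is compatible with the prescribed radii of the integration cycles and with the domains allowed for $W_1,W_2$. This holds because the partial ordering of the variables in the $n$-loop ladder diagram is itself invariant under simultaneously interchanging $Z_1\leftrightarrow Z_2$, interchanging $W_1\leftrightarrow W_2$, and reversing $T_1,\dots,T_n$; hence the radii $r_1,\dots,r_n$ (and $R_1,R_2$) may be chosen symmetrically and the intervening deformations of cycles leave the integrals unchanged. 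One should also observe that there is no circularity: only the ladder case of Theorem \ref{main-thm}, established independently in the preceding subsection and not relying on Lemma \ref{symmetry-lem}, is used here.
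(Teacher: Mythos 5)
Your argument is correct and is essentially the paper's: the paper verifies the identity on the generators $(z_{11}-z'_{11})^k$ --- which are $\sigma$-eigenvectors on which $\tilde L^{(n)}$ acts by the scalar $\mu^{(n)}_{k+1}$ already computed for ladder diagrams --- and then invokes equivariance, which is precisely your observation that $\sigma$ and $\tilde L^{(n)}$ both lie in the commutant of the multiplicity-free decomposition (\ref{tensor-decomp}) and hence commute by Schur's Lemma. Your checks on non-circularity and on the compatibility of the integration radii with the flip are well placed but, as you note, unproblematic.
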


\begin{proof}
Clearly, this property is true for the generators $(z_{11}-z'_{11})^k$,
$k \ge 0$, of ${\cal H}^+ \otimes {\cal H}^+$.
Therefore, by the $\mathfrak{gl}(2,\HC)$-equivariance of $\tilde L^{(n)}$,
it is true for all elements of ${\cal H}^+ \otimes {\cal H}^+$.
\end{proof}

By induction, we assume that Theorem \ref{main-thm} --
and hence Corollary \ref{magic-id-operators} -- hold for $n-1$.

\begin{lem}
We have:
\begin{equation}  \label{gen-equal-eqn}
L^{(n)} ((z_{11})^k \otimes 1) = \tilde L^{(n)} ((z_{11})^k \otimes 1),
\qquad k =0,1,2,\dots.
\end{equation}
\end{lem}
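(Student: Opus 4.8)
The plan is to exploit the recursive structure of $l^{(n)}$. Since $d^{(n)}$ is obtained from $d^{(n-1)}$ by attaching a slingshot that turns $Z_1$ into a solid vertex $T_n$, we have, as recorded in the proof of Lemma~\ref{conformal},
$$
l^{(n)}(Z_1,Z_2;W_1,W_2) = \frac{i}{2\pi^3} \int_{T_n \in U(2)_{r_n}}
\frac{N(Z_2-W_2)\cdot l^{(n-1)}(T_n,Z_2;W_1,W_2)}
{N(Z_1-T_n)\,N(Z_2-T_n)\,N(W_2-T_n)}\,dV_{T_n}.
$$
Substituting this into the definition of $L^{(n)}\bigl((z_{11})^k \otimes 1\bigr)(W_1,W_2)$, I would first carry out the $Z_1$-integration. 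Since $\degt_{Z_1}(z_{11})^k = (k+1)(z_{11})^k$ and $(z_{11})^k = t^{k/2}_{-k/2\,\underline{-k/2}}(Z_1)$ by (\ref{t-special}), expanding $N(Z_1-T_n)^{-1}$ by the matrix-coefficient expansion (\ref{1/N-expansion}) — which is legitimate because $T_n \prec Z_1$ forces $r_n < R_1$, so the series converges absolutely in the relevant region — and then using the orthogonality relations (\ref{orthogonality}) collapses the inner integral to $\tfrac1{k+1}(t_{n,11})^k$, where $t_{n,11}$ is the $(1,1)$-entry of $T_n$. After this step,
$$
L^{(n)}\bigl((z_{11})^k \otimes 1\bigr)(W_1,W_2) = \Bigl(\frac{i}{2\pi^3}\Bigr)^{2}
\iint_{\genfrac{}{}{0pt}{}{T_n \in U(2)_{r_n}}{Z_2 \in U(2)_{R_2}}}
\frac{(t_{n,11})^k\cdot N(Z_2-W_2)\cdot l^{(n-1)}(T_n,Z_2;W_1,W_2)}
{N(Z_2-T_n)\,N(W_2-T_n)}\,\frac{dV_2}{N(Z_2)}\,dV_{T_n}.
$$

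Next I would show that this double integral equals
$$
\frac{1}{k+1}\sum_{p=0}^{k} (w_{2,11})^{\,k-p}\cdot L^{(n-1)}\bigl((z_{11})^p \otimes 1\bigr)(W_1,W_2),
$$
where $w_{2,11}$ is the $(1,1)$-entry of $W_2$ and $L^{(n-1)}$ is the operator attached to $d^{(n-1)}$. The idea is to expand the first argument of $l^{(n-1)}(T_n,Z_2;W_1,W_2)$ in the $K$-type basis $\{t^l_{m\,\underline{n}}(T_n)\}$ of ${\cal H}^+$, perform the $T_n$-integration against $(t_{n,11})^k\,N(Z_2-T_n)^{-1}N(W_2-T_n)^{-1}$ with $Z_2$ held fixed using Lemma~\ref{z^p}, and then integrate over $Z_2$; the factor $(t_{n,11})^k\,N(Z_2-W_2)/\bigl(N(Z_2-T_n)N(W_2-T_n)\bigr)$ plays exactly the role of a one-slingshot ``reduction kernel,'' so the bookkeeping mirrors the proof of Lemma~\ref{gen-val-lemma} and produces the displayed sum of monomials in $W_2$ multiplying the $(n-1)$-loop operator evaluated at $(z_{11})^p \otimes 1$. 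Granting this, the inductive hypothesis (Theorem~\ref{main-thm}, hence Corollary~\ref{magic-id-operators}, for $n-1$) gives $L^{(n-1)} = \tilde L^{(n-1)}$; on the other hand, combining Lemma~\ref{symmetry-lem} for $\tilde L^{(n)}$ with Corollary~\ref{ladder-recursive-cor} (the latter applied after interchanging the two tensor factors and the two arguments $W_1,W_2$) shows that the ladder operator obeys the very same recursion, $\tilde L^{(n)}\bigl((z_{11})^k\otimes 1\bigr) = \tfrac1{k+1}\sum_{p=0}^k (w_{2,11})^{k-p}\,\tilde L^{(n-1)}\bigl((z_{11})^p\otimes 1\bigr)$. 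Equating the two recursions gives (\ref{gen-equal-eqn}).

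The step I expect to be the main obstacle is the identification in the second paragraph. Unlike in the ladder case, the conformal-weight numerator $N(Z_2-W_2)$ together with the propagators $N(Z_2-T_n)^{-1}$ and $N(W_2-T_n)^{-1}$ entangle the $T_n$-, $Z_2$-, and $W_2$-dependence and destroy harmonicity of the integrand in $Z_2$, so this is not a one-line manipulation: one must argue that, paired against this kernel, only the part of $l^{(n-1)}$ detected by the operator $L^{(n-1)}$ actually contributes, so that replacing $L^{(n-1)}$ by $\tilde L^{(n-1)}$ is permissible. One must also keep careful track of the relative radii of the cycles — $r_n < R_1$, $r_n < R_2$, $W_2 \in \BB D^+_{r_{\text{min},2}}$, and the remaining inequalities forced by the partial ordering on $d^{(n)}$ — in order to license each use of Fubini's theorem and of the absolutely convergent expansion (\ref{1/N-expansion}). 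Finally, once (\ref{gen-equal-eqn}) is established for every $k$, the fact that ${\cal H}^+ \otimes {\cal H}^+$ is generated by the elements $(z_{11})^k \otimes 1$ (Lemma~\ref{product-lemma}), together with the $\mathfrak{gl}(2,\HC)$-equivariance of both $L^{(n)}$ and $\tilde L^{(n)}$, forces $L^{(n)} = \tilde L^{(n)}$, which is the remaining assertion of Theorem~\ref{main-thm} for the diagram $d^{(n)}$.
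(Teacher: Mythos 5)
Your overall architecture is the same as the paper's: reduce the claim to the recursion
$L^{(n)}\bigl((z_{11})^k\otimes 1\bigr)=\frac1{k+1}\sum_{p=0}^k (w'_{11})^{k-p}\,L^{(n-1)}\bigl((z_{11})^p\otimes 1\bigr)$,
then combine the induction hypothesis $L^{(n-1)}=\tilde L^{(n-1)}$ with Corollary \ref{ladder-recursive-cor} and Lemma \ref{symmetry-lem} to see that $\tilde L^{(n)}$ satisfies the identical recursion. That endgame is correct. But the entire content of the lemma is the derivation of the recursion for a general $d^{(n)}$, and this is precisely the step you flag as ``the main obstacle'' and do not carry out. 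Your route --- integrate $Z_1$ first and then try to pair $l^{(n-1)}(T_n,Z_2;W_1,W_2)$ against the entangled kernel $(t_{n,11})^k\,N(Z_2-W_2)\,N(Z_2-T_n)^{-1}N(W_2-T_n)^{-1}$ --- does not go through as described: Lemma \ref{z^p} computes the integral of a monomial against two propagators and says nothing about pairing with a general function of $T_n$, so it cannot produce the claimed ``reduction kernel'' identity. The paper's key idea, which is missing from your proposal, is to integrate out $Z_2$ \emph{first}: because $\phi_2=1$ and at $Z_2$ the number of solid edges exceeds the number of dashed edges by one, the $Z_2$-integral is computed by the same leading-term orthogonality argument as (\ref{Z_1-integral}) and gives the same answer whether or not the two new slingshot factors $N(Z_2-W_2)$ and $N(Z_2-T_n)^{-1}$ are present; hence these two edges may simply be deleted. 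This disentangles $T_n$ from $Z_2$, and the remaining $T_n$-integral $\frac i{2\pi^3}\int t^{k/2}_{-k/2\,\underline{-k/2}}(T_n)\,l^{(n-1)}(T_n,Z_2;W_1,W_2)\,N(W_2-T_n)^{-1}\,dV$ is then evaluated by expanding $l^{(n-1)}$ in the basis (\ref{Zh-basis}) in $T_n$ (only nonnegative powers of $N(T_n)$ occur, again by edge counting) and applying Corollary \ref{z^p-cor} --- not Lemma \ref{z^p}. Without this step your argument is a plan, not a proof.

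A secondary but real error: your $Z_1$-integration constant is wrong. Since $\degt_{Z_1}(z_{11})^k=(k+1)(z_{11})^k$ and the orthogonality relation (\ref{orthogonality}) contributes a factor $\frac1{2l+1}=\frac1{k+1}$, the inner integral equals $(t_{n,11})^k$, not $\frac1{k+1}(t_{n,11})^k$ (this is also what Proposition \ref{quotient-prop} gives directly). The factor $\frac1{k+1}$ in the recursion arises later, from the $T_n$-integration via Corollary \ref{z^p-cor}; as written, your bookkeeping would double-count it and produce $\frac1{(k+1)^2}$, contradicting the recursion you assert.
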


\begin{proof}
Similarly to the proof of Lemma \ref{gen-val-lemma}, we first integrate over
$Z_2 \in U(2)_{R_2}$.
It is easy to see that the number of solid edges at $Z_2$ is one more than
the number of dashed edges.
Thus the integral over $Z_2$ is very similar to the integral
(\ref{Z_1-integral}) with variable $Z_1$ replaced with $Z_2$;
this integral is 1.
Hence, for the purposes of evaluating $L^{(n)} ((z_{11})^k \otimes 1)$,
in diagram $d^{(n)}$ we can delete the solid edge joining $Z_2$ and $T_n$
and the dashed edge joining $Z_2$ and $W_2$ (see Figure \ref{simplified-fig}).

\begin{figure}
\begin{center}
\begin{subfigure}{0.16\textwidth}
\centering
\includegraphics[scale=1]{slingshot.eps}
\end{subfigure}
\hspace{-0.18\textwidth}
\begin{subfigure}{0.16\textwidth}
\centering
\setlength{\unitlength}{1mm}
\begin{picture}(22,28)
\put(0,13){$Z_1$}
\put(17,0){$Z_2$}
\put(17,25){$W_2$}
\put(9,16){$T_n$}
\end{picture}
\end{subfigure}
+ \quad $d^{(n-1)}$
\qquad
$\begin{matrix} \text{can be} \\ \text{replaced with} \end{matrix}$
\qquad
\begin{subfigure}{0.16\textwidth}
\centering
\vskip-.36in
\includegraphics[scale=1]{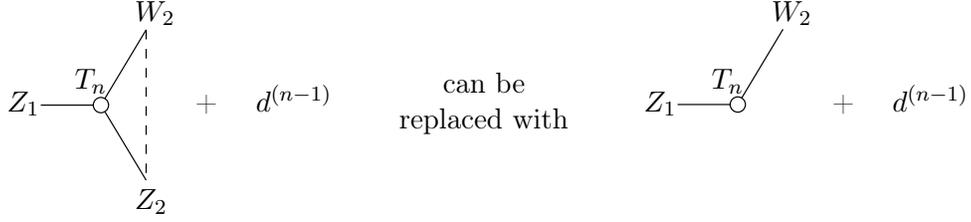}
\end{subfigure}
\hspace{-0.18\textwidth}
\begin{subfigure}{0.16\textwidth}
\centering
\setlength{\unitlength}{1mm}
\begin{picture}(22,28)
\put(0,13){$Z_1$}
\put(17,25){$W_2$}
\put(9,16){$T_n$}
\end{picture}
\end{subfigure}
+ \quad $d^{(n-1)}$
\end{center}
\caption{We can make this replacement in $d^{(n)}$
for the purposes of evaluating $L^{(n)} ((z_{11})^k \otimes 1)$.}
\label{simplified-fig}
\end{figure}

When we integrate out the $Z_1$ variable, by Proposition \ref{quotient-prop},
we get
$$
\frac{i}{2\pi^3} \int_{Z_1 \in U(2)_{R_1}} \frac{\degt_{Z_1}((z_{11})^k)}{N(Z_1-T_n)}
\,\frac{dV_1}{N(Z_1)} = (t''_{11})^k = t^{k/2}_{-k/2\,\underline{-k/2}}(T_n).
$$
Then we integrate out the $T_n$ variable:
\begin{equation}  \label{T_n-integral}
\frac{i}{2\pi^3} \int_{T_n \in U(2)_{r_1}} t^{k/2}_{-k/2\,\underline{-k/2}}(T_n) \cdot
\frac{l^{(n-1)}(T_n,Z_2;W_1,W_2)}{N(W_2-T_n)} \,dV.
\end{equation}
Using (\ref{1/N-expansion}), we expand $N(W_2-T_n)^{-1}$ and
$l^{(n-1)}(T_n,Z_2;W_1,W_2)$ in terms of basis functions (\ref{Zh-basis}):
$$
\frac 1{N(W_2-T_n)}= N(T_n)^{-1} \cdot \sum_{l,p,q}
t^l_{p\,\underline{q}}(T_n^{-1}) \cdot t^l_{q\,\underline{p}}(W_2),
\qquad \begin{matrix} l=0,\frac 12, 1, \frac 32,\dots, \\
p,q = -l, -l+1, \dots, l, \end{matrix}
$$
$$
l^{(n-1)}(T_n,Z_2;W_1,W_2) = \sum_{l',p',q',m}
t^{l'}_{p'\,\underline{q'}}(T_n^{-1}) \cdot N(T_n)^{-1-m} \cdot f_{l',p',q',m}(Z_2;W_1,W_2)
$$
for some functions $f_{l',p',q',m}(Z_2;W_1,W_2)$.
In the diagram $d^{(n-1)}$, the number of solid edges at $Z_1$ is one more than
the number of dashed edges.
This implies that only the terms with $m \ge 0$ appear in the expansion of
$l^{(n-1)}(T_n,Z_2;W_1,W_2)$.
Thus the integral (\ref{T_n-integral}) can be rewritten as
$$
\sum_{\genfrac{}{}{0pt}{}{l,p,q}{l',p',q',m}}
\bigl\langle N(T_n)^{-2-m} \cdot t^l_{p \, \underline{q}}(T_n^{-1}) \cdot
t^{l'}_{p' \, \underline{q'}}(T_n^{-1}), t^{k/2}_{-k/2\,\underline{-k/2}}(T_n) \bigr\rangle
\cdot t^l_{q\,\underline{p}}(W_2) \cdot f_{l',p',q',m}(Z_2;W_1,W_2).
$$
By Corollary \ref{z^p-cor}, these terms are zero unless $m=0$, $l+l'=k/2$,
$p=q=-l$ and $p'=q'=-l'$, and (\ref{T_n-integral}) becomes
\begin{multline*}
\frac1{k+1} \sum_{l+l'=k/2} t^l_{-l\,\underline{-l}}(W_2) \cdot
f_{l',-l',-l',0}(Z_2;W_1,W_2) \\
= \sum_{l,l',p',q',m} \frac{2l+1}{k+1} t^l_{-l\,\underline{-l}}(W_2)
\cdot \bigl\langle t^{l'}_{-l'\,\underline{-l'}}(T_n),
t^{l'}_{p' \, \underline{q'}}(T_n^{-1}) \cdot N(T_n)^{-2-m} \bigr\rangle
\cdot f_{l',p',q',m}(Z_2;W_1,W_2) \\
= \frac1{k+1} \sum_{l+l'=k/2} t^l_{-l\,\underline{-l}}(W_2) \cdot
\frac{i}{2\pi^3} \int_{T_n \in U(2)_{r_n}} l^{(n-1)}(T_n,Z_2;W_1,W_2) \cdot
\degt_{T_n}\bigl( t^{l'}_{-l'\,\underline{-l'}}(T_n) \bigr) \,\frac{dV_{T_n}}{N(T_n)}.
\end{multline*}
This proves that
$$
L^{(n)} ((z_{11})^k \otimes 1) = \frac1{k+1} \sum_{p=0}^k
(w'_{11})^{k-p} \cdot L^{(n-1)} ((z_{11})^p \otimes 1),
$$
where $L^{(n-1)}$ denotes the integral operator corresponding to $d^{(n-1)}$.
By induction hypothesis, Corollary \ref{ladder-recursive-cor} and
Lemma \ref{symmetry-lem}, this implies (\ref{gen-equal-eqn}).
\end{proof}

From this point on, the proof proceeds as in the ladder case,
with trivial modifications.
Because of the way the last slingshot is attached, we know that the
function $l^{(n)}(Z_1,Z_2;W_1,W_2)$ is harmonic in $Z_1$.
Then the pairings (\ref{H-pairing}) and (\ref{H-pairing2}) agree,
and we can rewrite $L^{(n)}$ as
\begin{multline*}
L^{(n)} (\phi_1 \otimes \phi_2)(W_1,W_2) \\
= \frac{i}{4\pi^5} \iint_{\genfrac{}{}{0pt}{}{Z_1 \in S^3_{R_1}}{Z_2 \in U(2)_{R_2}}}
l^{(n)}(Z_1,Z_2;W_1,W_2) \cdot (\degt_{Z_1} \phi_1)(Z_1)
\cdot (\degt_{Z_2} \phi_2)(Z_2) \,\frac{dS_1}{R_1} \frac{dV_2}{N(Z_2)},
\end{multline*}
where $\phi_1, \phi_2 \in {\cal H}^+$, $R_1 > r_{\text{max},1}$,
$R_2 > r_{\text{max},2}$, $W_1 \in \BB D^+_{r_{\text{min},1}}$,
$W_2 \in \BB D^+_{r_{\text{min},2}}$, as before.
We introduce a closely related integral operator
$$
\mathring{L}^{(n)} : (\pi^0_l, {\cal H}^+) \otimes (\varpi_2^r, \Zh) \to
(\pi^0_l, \Zh^+) \otimes (\pi^0_r, \Zh^+),
$$
$$
\mathring{L}^{(n)} (\phi \otimes f)(W_1,W_2)
= \frac{i}{4\pi^5} \iint_{\genfrac{}{}{0pt}{}{Z_1 \in S^3_{R_1}}{Z_2 \in U(2)_{R_2}}}
l^{(n)}(Z_1,Z_2;W_1,W_2) \cdot (\degt_{Z_1} \phi)(Z_1) \cdot f(Z_2)
\,\frac{dS_1}{R_1}\,dV_2,
$$
where $\phi \in {\cal H}^+$, $f \in \Zh$.
The $\mathfrak{gl}(2,\HC)$-equivariance of this operator follows from
the $\mathfrak{gl}(2,\HC)$-invariance of the bilinear pairings
(\ref{pairing}), (\ref{pairing2}) and Lemma \ref{conformal}.
Clearly, we have
\begin{equation}  \label{LL2-gen}
L^{(n)} = \mathring{L}^{(n)} \circ \bigl( N(Z_2)^{-1} \cdot \degt_{Z_2} \bigr).
\end{equation}

\begin{lem}  \label{annihilator-lem1-gen}
The operator $\mathring{L}^{(n)}$ annihilates ${\cal H}^+ \otimes I_2^-$.
\end{lem}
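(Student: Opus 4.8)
The plan is to prove this by mirroring the proof of Lemma~\ref{annihilator-lem1}, interchanging the roles of $Z_1$ and $Z_2$. Since ${\cal H}^+ \otimes I_2^-$ is spanned by pure tensors $\phi \otimes f$ with $\phi \in {\cal H}^+$ and $f \in I_2^-$, and $\mathring{L}^{(n)}$ is linear, it suffices to show $\mathring{L}^{(n)}(\phi \otimes f) = 0$ for each such tensor. Writing $f(Z_2)$ as a finite sum of terms $f_l(Z_2) \cdot N(Z_2)^{-l}$ with $f_l \in {\cal H}^+$ homogeneous and $l \ge 2$, we may assume, again by linearity, that $f(Z_2) = f_l(Z_2) \cdot N(Z_2)^{-l}$ for a single such term.

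To evaluate $\mathring{L}^{(n)}(\phi \otimes f_l N^{-l})$ I would write $l^{(n)}$ as an iterated integral over the internal variables $T_1,\dots,T_n$ of the diagram $d^{(n)}$, interchange the order of integration, and carry out the $Z_2$-integration first, holding $T_1,\dots,T_n$ fixed, just as was done for $Z_1$ in~(\ref{Z_1-integral-2}). The variable $Z_2$ enters $l^{(n)}(Z_1,Z_2;W_1,W_2)$ only through factors $N(Z_2-X_i)^{-1}$, one for each solid edge at the vertex $Z_2$ of $d^{(n)}$, and factors $N(Z_2-Y_j)$, one for each dashed edge there. As was noted earlier in this subsection, the number of solid edges at $Z_2$ is one more than the number of dashed edges; expanding each $N(Z_2-X_i)^{-1}$ by~(\ref{1/N-expansion}), the product of all these factors equals $N(Z_2)^{-1} + \text{lower degree terms in } Z_2$, so the $Z_2$-dependent part of the integrand is $f_l(Z_2) \cdot N(Z_2)^{-l-1} + \text{lower degree terms in } Z_2$. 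Comparing with the orthogonality relations~(\ref{orthogonality})---which imply that integration of an element of $\Zh$ over $U(2)_{R_2}$ detects only its $N(Z_2)^{-2}$ component---and using $l \ge 2$, exactly as in the proof of Lemma~\ref{annihilator-lem1}, one sees that the $Z_2$-integral is zero; hence $\mathring{L}^{(n)}(\phi \otimes f_l N^{-l}) = 0$, and $\mathring{L}^{(n)}$ annihilates all of ${\cal H}^+ \otimes I_2^-$.

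The interchange of the $Z_2$-integration with the $T_i$-integrations, and the pointwise convergence of the expansions~(\ref{1/N-expansion}), are justified just as in Lemma~\ref{gen-val-lemma}, using $R_2 > r_{\text{max},2}$ and the prescribed relative positions of the cycles. The only thing genuinely requiring verification is the combinatorial claim that, in $d^{(n)}$, the solid edges at $Z_2$ outnumber the dashed ones by exactly one; but this is preserved when a slingshot is attached so that $Z_1$ becomes a solid vertex---that operation adds one solid and one dashed edge at $Z_2$---and it holds for the one-loop box diagram, hence in general. I do not expect any real obstacle: the argument is word-for-word parallel to that of Lemma~\ref{annihilator-lem1}.
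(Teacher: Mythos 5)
Your proposal is correct and follows essentially the same route as the paper: the paper's own proof of Lemma~\ref{annihilator-lem1-gen} simply invokes the observation that the number of solid edges at $Z_2$ exceeds the number of dashed edges by one and then repeats the argument of Lemma~\ref{annihilator-lem1} with $Z_1$ and $Z_2$ interchanged, which is exactly what you do. Your explicit verification that attaching a slingshot preserves the edge-count property at $Z_2$ is a small addition the paper leaves implicit (it only asserts this earlier, in the proof of (\ref{gen-equal-eqn})), but it does not change the argument.
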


\begin{proof}
As we have observed earlier, the number of solid edges at $Z_2$ is one more
than the number of dashed edges.
Using this observation, one proceeds exactly as in the proof of
Lemma \ref{annihilator-lem1} with the roles of the variables $Z_1$ and $Z_2$
switched.
\end{proof}

Let $\mathfrak{V}' \subset {\cal H}^+ \otimes \Zh$ denote the
subrepresentation of $(\pi^0_l, {\cal H}^+) \otimes (\varpi_2^r, \Zh)$
generated by
$$
\bigl\{ (z_{11})^k \cdot N(Z')^{-1} ;\: k=0,1,2,3,\dots \bigr\}.
$$
Thus we have a $\mathfrak{gl}(2,\HC)$-equivariant map
$$
\mathring{L}^{(n)} : (\pi^0_l \otimes \varpi_2^r, \mathfrak{V}')
\to (\pi^0_l, {\cal H}^+) \otimes (\pi^0_r, {\cal H}^+).
$$
The same proof as that of Lemma \ref{annihilator-lem2} also shows:

\begin{lem}  \label{annihilator-lem2-gen}
The operator $\mathring{L}^{(n)}$ annihilates
$\mathfrak{V}' \cap ({\cal H}^+ \otimes \Zh^+)$.
\end{lem}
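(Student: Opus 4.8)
The plan is to repeat the proof of Lemma~\ref{annihilator-lem2} essentially verbatim, with the roles of the two tensor factors (equivalently, of the variables $Z_1$ and $Z_2$) interchanged. Two structural facts are needed and are already available: $\mathring{L}^{(n)}$ is $\mathfrak{gl}(2,\HC)$-equivariant (established just above from the invariance of the pairings (\ref{pairing}), (\ref{pairing2}) and Lemma~\ref{conformal}), and $\mathring{L}^{(n)}$ raises the total polynomial degree by exactly $2$, which is the same power count on the kernel $l^{(n)}$ and the measures $dS_1/R_1$, $dV_2$ as in the ladder case. Note also that $\mathfrak{V}'\cap({\cal H}^+\otimes\Zh^+)$ is a graded $\mathfrak{gl}(2,\HC)$-submodule, so it makes sense to look at its homogeneous pieces.

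First I would argue by contradiction: suppose some $x\in\mathfrak{V}'\cap({\cal H}^+\otimes\Zh^+)$ has $\mathring{L}^{(n)}(x)\ne 0$. By the equivariance of $\mathring{L}^{(n)}$ and Schur's Lemma, after subtracting off components we may assume $\mathring{L}^{(n)}(x)$ lies in a single irreducible constituent of $(\pi^0_l,{\cal H}^+)\otimes(\pi^0_r,{\cal H}^+)$; since the component $(\rho_{k+1},\Zh^+\otimes\BB C^{(k+1)\times(k+1)})$ is generated by $(z_{ij}-z'_{ij})^k$, equivariance lets us further arrange $\mathring{L}^{(n)}(x)=(z_{ij}-z'_{ij})^k$ for some $k\ge 0$. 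Because the right-hand side is homogeneous of degree $k$ and $\mathring{L}^{(n)}$ shifts degree by $2$, only the homogeneous component $x'$ of $x$ of degree $k-2$ contributes to $\mathring{L}^{(n)}(x)$, and $x'\in{\cal H}^+\otimes\Zh^+$.

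The decisive step is then a comparison of $U(2)\times U(2)$-types. Viewing $\mathring{L}^{(n)}$ as a $U(2)\times U(2)$-equivariant map, $\mathring{L}^{(n)}(x')=(z_{ij}-z'_{ij})^k\in V_{k/2}\boxtimes V_{k/2}$. On the other hand, since $x'$ has degree $k-2$ and lies in ${\cal H}^+\otimes\Zh^+$,
$$
x'\in\bigoplus_{\genfrac{}{}{0pt}{}{2l+2l'+2p=k-2}{l,l',p\ge 0}} (V_l\boxtimes V_l)\otimes N(Z')^p\cdot(V_{l'}\boxtimes V_{l'}).
$$
But $V_l\otimes V_{l'}$ does not contain $V_{k/2}$ unless $l+l'\ge k/2$, which is impossible when $2l+2l'\le k-2$; hence $\mathring{L}^{(n)}(x')=0$, contradicting $\mathring{L}^{(n)}(x)\ne 0$. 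This proves the lemma.

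The only point that requires any attention is verifying that the general-case $\mathring{L}^{(n)}$ (built here with integration over $Z_1\in S^3_{R_1}$ and $Z_2\in U(2)_{R_2}$, with $\mathfrak{V}'$ sitting in the second factor) genuinely shifts the polynomial degree by exactly $2$; but this is the same elementary bookkeeping on the degrees of the kernel factors $N(\,\cdot\,)^{\pm1}$ in $l^{(n)}$ that was used for the ladder operator, so there is no essentially new obstacle — the entire argument is the $Z_1\leftrightarrow Z_2$ mirror image of the proof of Lemma~\ref{annihilator-lem2}.
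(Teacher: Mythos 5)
Your proposal is correct and matches the paper exactly: the paper proves Lemma \ref{annihilator-lem2-gen} by simply remarking that the same argument as in Lemma \ref{annihilator-lem2} applies, which is precisely the mirror-image argument (degree shift by $2$, reduction via equivariance to $\mathring{L}^{(n)}(x)=(z_{ij}-z'_{ij})^k$, and the $U(2)\times U(2)$-type comparison) that you spell out.
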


Combining Lemmas \ref{annihilator-lem1-gen} and \ref{annihilator-lem2-gen},
we see that $\mathring{L}^{(n)}$ descends to a well-defined
$\mathfrak{gl}(2,\HC)$-equivariant quotient map
\begin{equation}  \label{quotient-map-gen}
\frac{\mathfrak{V}'}{\mathfrak{V}' \cap
({\cal H}^+ \otimes (I^-_2 \oplus \Zh^+))} \to {\cal H}^+ \otimes {\cal H}^+.
\end{equation}
Clearly, this quotient space is a $\mathfrak{gl}(2,\HC)$-invariant subspace of 
${\cal H}^+ \otimes (\Zh/(I^-_2 \oplus \Zh^+))$.
By Proposition \ref{quotient-prop} and Lemma \ref{product-lemma}, we have
the following isomorphisms of representations of $\mathfrak{gl}(2,\HC)$:
$$
\biggl( \pi^0_l \otimes \varpi_2^r, \frac{\mathfrak{V}'}{\mathfrak{V}' \cap
({\cal H}^+ \otimes (I^-_2 \oplus \Zh^+))} \biggr) \simeq
\biggl( \pi^0_l \otimes \varpi_2^r, {\cal H}^+ \otimes
\frac{\Zh}{I^-_2 \oplus \Zh^+} \biggr)
\simeq (\pi^0_l, {\cal H}^+) \otimes (\pi^0_r, {\cal H}^+).
$$
From (\ref{LL2-gen}) and Lemma \ref{gen-val-lemma} we conclude that
the operator $L^{(n)}$ has image in ${\cal H}^+ \otimes {\cal H}^+$
and the map (\ref{Ln}) is $\mathfrak{gl}(2,\HC)$-equivariant.
By (\ref{gen-equal-eqn}), the maps $L^{(n)}$ and $\tilde L^{(n)}$ coincide
on the generators of ${\cal H}^+ \otimes {\cal H}^+$.
Since Theorem \ref{main-thm} is already established for $\tilde L^{(n)}$,
it follows that the result holds for $L^{(n)}$ as well.

\separate

\separate

\noindent
{\em Department of Mathematics, Indiana University,
Rawles Hall, 831 East 3rd St, Bloomington, IN 47405}


\begin{thebibliography}{[DHS\,]}
\bibitem[DD]{DD} A.~I.~Davydychev, R.~Delbourgo,
{\em A geometrical angle on Feynman integrals},
J. Math. Phys. {\bf 39} (1998), no. 9, 4299-4334.
\bibitem[DHSS]{DHSS} J.~M.~Drummond, J.~Henn, V.~A.~Smirnov, E.~Sokatchev,
{\em Magic identities for conformal four-point integrals},
J. High Energy Phys. 0701 (2007), no. 1, 064, 15 pp.
\bibitem[FL1]{FL1} I.~Frenkel, M.~Libine,
{\em Quaternionic analysis, representation theory and physics},
Advances in Math {\bf 218} (2008), 1806-1877; also arXiv:0711.2699.
\bibitem[FL2]{FL2} I.~Frenkel, M.~Libine,
{\em Split quaternionic analysis and the separation of the series for
$SL(2,\mathbb R)$ and $SL(2,\mathbb C)/SL(2,\mathbb R)$},
Advances in Math {\bf 228} (2011), 678-763; also arXiv:1009.2532.
\bibitem[FL3]{FL3} I.~Frenkel, M.~Libine, {\em Anti de Sitter deformation
of quaternionic analysis and the second-order pole},
IMRN, {\bf 2015} (2015), 4840-4900; also arXiv:1404.7098.
\bibitem[GT]{GT} F.~G\"ursey, C.-H.~Tze,
{\em On the role of division, Jordan and related algebras in particle physics},
World Scientific Publishing Co., 1996.
\bibitem[GR]{GR} I.~S.~Gradshteyn, I.~M.~Ryzhik,
{\em Table of Integrals, Series, and Products}, 7th edition,
Academic Press, Amsterdam, 2007.
\bibitem[JV]{JV} H.~P.~Jakobsen, M.~Vergne,
{\em Restrictions and expansions of holomorphic representations},
J. Funct. Anal. {\bf 34} (1979), no. 1, 29-53.
\bibitem[L]{L} M.~Libine, {\em The two-loop ladder diagram and representations
of $U(2,2)$}, arXiv:1309.5665, submitted.
\bibitem[M]{Mar} M.~Marcolli, {\em Feynman motives}.
World Scientific Publishing Co. Pte. Ltd., Hackensack, NJ, 2010.
\bibitem[Sm]{Sm} V.~A.~Smirnov, {\em Evaluating Feynman integrals},
Springer Tracts in Modern Physics, 211, Springer-Verlag, Berlin, 2004.
\bibitem[Su]{Su} A.~Sudbery, {\em Quaternionic analysis},
Math. Proc. Camb. Math. Soc. {\bf 85} (1979), 199-225.
\bibitem[UD]{UD} N.~I.~Ussyukina, A.~I.~Davydychev,
{\em Exact results for three- and four-point ladder diagrams with an arbitrary
number of rungs}, Phys. Lett. B {\bf 305} (1993), no. 1-2, 136-143.
\bibitem[V]{V} N.~Ja.~Vilenkin, {\em Special functions and the theory of
group representations}, translated from the Russian by V.~N.~Singh,
Translations of Mathematical Monographs, Vol. 22
American Mathematical Society, Providence, RI 1968.
\bibitem[W]{W} P.~Wagner, {\em A volume formula for asymptotic hyperbolic
tetrahedra with an application to quantum field theory},
Indag. Math. (N.S.) {\bf 7} (1996), no. 4, 527-547.
\end{thebibliography}
\end{document}